\theoremstyle{plain}
\newtheorem{theorem}{Theorem}[section]
\newtheorem{lemma}[theorem]{Lemma}
\newtheorem{corollary}[theorem]{Corollary}
\newtheorem{assumption}[theorem]{Assumption}
\theoremstyle{definition}
\newtheorem{definition}[theorem]{Definition}
\theoremstyle{remark}
\newcommand{\Lcal}{\mathcal{L}} 
\newcommand{\Ecal}{\mathcal{E}} 
\newcommand{\Ical}{\mathcal{I}} 
\newcommand{\Acal}{\mathcal{A}} 
\newcommand{\Ncal}{\mathcal{N}} 
\newcommand{\baligned}{\begin{aligned}} 
	\newcommand{\ealigned}{\end{aligned}} 
\newcommand{\bassumption}{\begin{assumption}} 
	\newcommand{\eassumption}{\end{assumption}} 
\newcommand{\st}{\text{s.t.} } 
\newcommand{\zbf}{\boldsymbol{z}} 
\newcommand{\ubf}{\boldsymbol{u}} 
\newcommand{\vbf}{\boldsymbol{v}} 
\newcommand{\support}[2]{\delta^*(#1\mid #2)}
\begin{document}

	
	\articletype{ARTICLE TEMPLATE}
	
	\title{An Inexact First-Order Method for   Constrained Nonlinear Optimization}
	
\author{
  \name{Hao Wang\textsuperscript{a}\thanks{CONTACT: {Hao Wang}. Email: haw309@gmail.com} and   Fan Zhang\textsuperscript{b}  and Jiashan Wang\textsuperscript{c} and Yuyang Rong\textsuperscript{d}}
  \affil{\textsuperscript{a, b, d}School  of Information Science and Technology, ShanghaiTech University, Shanghai, China; \\ 
   \textsuperscript{b}Shanghai Institute of Microsystem and Information Technology, Chinese
   Academy of Sciences, Shanghai, China;\\
   \textsuperscript{b}University of Chinese Academy of Sciences, Beijing, China;\\
   \textsuperscript{c}Department of Mathematics, University of Washington.}
 }
	\maketitle
	
	\begin{abstract}
		The primary focus of this paper is on designing an inexact first-order algorithm for solving   constrained nonlinear optimization problems. By controlling the inexactness of the  subproblem solution, we can significantly reduce the computational cost needed for each iteration. A penalty parameter updating strategy {\color{blue}during the process of solving the subproblem} enables the algorithm to automatically detect infeasibility. Global convergence for both feasible and infeasible cases are proved. Complexity analysis for the KKT residual is also derived under {\color{blue}mild} assumptions. Numerical experiments exhibit the ability of the proposed algorithm to rapidly find inexact optimal solution through cheap computational cost.
	\end{abstract}
	
	\begin{keywords}
		nonlinear optimization, sequential linear optimization,   constrained problems,  exact penalty functions, convex composite optimization, first-order methods
	\end{keywords}
	
	\section{Introduction} \label{s.Pd}
	In the last few years, a number of advances on first-order optimization methods have been made for unconstrained/constrained optimization problems in a wide range of applications including machine learning, compressed sensing and signal processing. This is largely due to their relatively low iteration computational cost, as well as their implementation easiness. 
	Numerous works have emerged for solving unconstrained optimization problems, e.g. the stochastic gradient descent methods \cite{ bottou2004stochastic, bottou2010large,bottou2012stochastic, johnson2013accelerating} and mirror descent methods \cite{beck2003mirror, juditsky2011solving} {\color{blue}for} solving machine learning problems, soft-thresholding type algorithms \cite{beck2009fast, daubechies2004iterative} for solving sparse reconstruction problems. 
	For certain structured constrained optimization problems, many first-order methods have also captured researchers' attention, such as  
	conditional gradient  methods (also known as Frank-Wolfe methods) for solving principle component analysis problems \cite{luss2013conditional}, gradient projection methods for solving various 
	problems with structured constraints \cite{duchi2011adaptive,xiao2014proximal}, and gradient methods on Riemannian manifolds \cite{ absil2009optimization, udriste1994convex}.

	On the contrary, little attention has been paid on first-order methods for solving general constrained  optimization problems in the past decades. 
	This is mainly due to  the \emph{slow tail convergence} of first-order methods, since it can cause heavy  computational burden for obtaining accurate solutions.  Most of the research efforts  can date back to the successive linear programming (SLP) algorithms \cite{baker1985successive, lasdon1979solving} designed in 1960s-1980s for solving pooling problems in oil refinery. Among various SLP algorithms, the most famous SLP algorithm is proposed by Fletcher and Maza in   \cite{fletcher2013practical, Fletcher}, which analyzes the global convergence as well as the local convergence under strict complementarity, second-order sufficiency and regularity conditions.  The other well-known work is the active-set algorithmic option implemented in the off-the-shelf solver  \texttt{Knitro} \cite{byrd2003algorithm}, which sequentially solves a linear optimization subproblem and an equality constrained quadratic optimization subproblem.

	The primary focus of this paper is to design an algorithmic framework of first-order methods for   nonlinear constrained optimization. Despite of their weakness on tail convergence,  first-order methods are widely used for quickly computing   relatively inaccurate solutions,  which can also be used 
	for initializing a high-order method or quickly  identifying the active-set \cite{byrd2003algorithm, oberlin2006active}. Unlike second-order methods, the subproblems in first-order methods are often easier to tackle.  If only inexact subproblem solutions are required to enforce the global convergence,  the computational cost per iteration could further be reduced, which may be able to compensate for the inefficiency of the entire algorithm.  To achieve this, one must carefully handle the possible infeasibility of the subproblem constraints. Many nonlinear solvers need sway away from the main algorithm and  turn to a so-called \emph{feasibility restoration phase} to improve the constraint satisfaction.  
	In a penalty method, the penalty parameter needs to be properly tuned so that the feasibility of the nonlinear problem is not deteriorated, such penalty parameter updating strategy have been studies in \cite{ burke2018dynamic, byrd2008inexact, byrd2010infeasibility,byrd2008steering} for sequential quadratic programming methods.

	\subsection{Contributions}
	
	The major contribution of this paper is an algorithmic framework of inexact first-order penalty methods.  
	The novelties of the proposed methods include three aspects.  First,  only  inexact   solve of each subproblem is needed, which  can significantly  reduce computational effort for each subproblem.  Indeed, if the subproblem is a linear optimization problem, then only a few pivots by simplex methods can be witnessed, making the  fast computation of  relatively inaccurate solutions possible. 
	The second novelty of our proposed methods is the ability of automatic detection of potential constraint inconsistencies, so that the algorithm can automatically solve for optimality if the problem is feasible or find an {\color{blue}infeasible} stationary point if the problem is (locally) infeasible. 
	The last novel feature of our work is the worst-case complexity analysis for the proposed algorithm under {\color{blue}mild} assumptions.  We show that the KKT residuals for optimality problem and feasibility problems need at most $O(1/\epsilon^2)$ iterations  to reach below $\epsilon$ and for feasible cases the constraint violation  locally needs at most $O(1/\epsilon^2)$ iterations---a novelty rarely seen in general nonlinear constrained optimization methods.

	\subsection{Organization}
	
	In \S\ref{sec.SLP}, we describe the proposed framework of inexact first-order penalty method. 
	The global convergence and worst-case complexity analysis  of the proposed methods are analyzed in \S\ref{s.Gca}.
	Subproblems algorithms are discussed in  \S\ref{subal}. 
	Implementations of the proposed methods and the
	numerical results are discussed in \S\ref{exp}.  Finally, concluding remarks are provided in \S\ref{con}.

	\section{A Framework of Inexact First-Order Methods}\label{sec.SLP}
	
	In this section, we formulate our problem of interest and outline a framework of inexact first-order penalty  method.  
	We present our algorithm in the context of the generic nonlinear optimization problem with equality and inequality constraints
	\begin{equation}\label{prob.nlp}\tag{NLP}
	\baligned
	\min_{x\in\mathbb{R}^n} &\ f(x) & & \\
	\st &\ c_i(x) =    0\ \ \text{for all}\ \ i \in \Ecal,\\
	&\ c_i(x) \leq 0\ \ \text{for all}\ \ i \in \Ical{\color{blue},}
	\ealigned
	\end{equation}
	where the functions $f : \mathbb{R}^n \to \mathbb{R}$ and $c_i: \mathbb{R}^n \to \mathbb{R}$ for $i\in\Ecal\cup\Ical$ are continuously differentiable. 
	Our algorithms   converge to
	stationary points for the
	\emph{feasibility problem}    
	\begin{equation}\tag{FP}\label{prob.fea}
	\min_{x\in\mathbb{R}^n}  \ v(x):=\sum_{i\in\Ecal\cup\Ical} v_i(c_i(x)),
	\end{equation}
	with $v_i(z) =   \left| z \right|,\  i\in\Ecal$, where $v_i(z) =  (z)_+,\  i\in\Ical,$
	{\color{blue} and  $(\cdot)_+=\max\{\cdot{, 0}\}$}.
	It can be shown that the Clarke's generalized gradients \cite{clarke1975generalized} of $v$, denoted by $\bar \partial v(x)$, is given by  
	\[  \bar \partial v(x)  = \{ \sum_{i\in\Ecal\cup\Ical} \lambda_i \nabla c_i(x) \mid \lambda_i \in \partial_c v_i(c_i(x))\},\]
	where 
	\begin{equation}\label{fea comp}
	\partial_c v_i(c_i(x)) = \begin{cases}      
	[-1,1]  & \quad\text{if }\  i\in\Ecal\qquad  \text{ and } \  c_i(x) = 0\\  
	[0,1]   & \quad\text{if }\  i\in\Ical \qquad   \text{ and }\  c_i(x) = 0\\
	\{-1\}     & \quad\text{if }\  i\in\Ecal\qquad  \text{ and } \  c_i(x) < 0\\
	\{0\}     & \quad\text{if }\  i\in \Ical \qquad     \text{ and } \  c_i(x) < 0\\
	\{1\}     & \quad\text{if }\  i\in\Ecal\cup\Ical\  \text{ and } \  c_i(x) > 0.
	\end{cases} 
	\end{equation}
	A stationary point $x$ for \eqref{prob.fea}
	must satisfy 
	\begin{equation}\label{infeasible stationary}
	0\in\bar \partial v(x),
	\end{equation}
	and it is called an infeasible stationary point if $v(x)>0$. 
	If {\color{blue}a} minimizer of \eqref{prob.fea} violates the constraints of \eqref{prob.nlp}, then it 
	provides  a  certificate  of  {\color{blue}   local} infeasibility for \eqref{prob.nlp}. 
	Despite the possibility that problem \eqref{prob.nlp} may be 
	feasible elsewhere, it is deemed that no further progress on minimizing constraint violation locally can be made.

	If the algorithm converges to a feasible point of problem \eqref{prob.nlp},  this point should be a stationary point of the $\ell_1$ exact penalty function 
	\begin{equation}\label{penalty}
	\phi(x; \rho):= \rho f(x) + v(x),
	\end{equation}
	with $v(x)=\phi(x; 0) =  0$ for final penalty parameter $\rho>0$.  
	Such points can be characterized by the 
	$0\in \bar \partial \phi(x; \rho) = \rho \nabla f(x) + \bar \partial v(x) $, 
	which is equivalently to 
	the first-order optimality condition 
	\begin{subequations}\label{kkt opt}
		\begin{align}    \rho\nabla f(x) + \sum_{i\in\Ecal\cup\Ical} \lambda_i \nabla c_i(x)  = 0 &,\label{kkt error opt}\\
		\lambda_i\in[-1,1], i\in\Ecal;\  \lambda_i \in[0,1], i\in\Ical &, \label{kkt dual feasible opt}\\
		\sum_{c_i(x)>0}(1- \lambda_i)v_i(c_i(x))+\sum_{i\in\Ecal,c_i(x)<0}(1+\lambda_i)v_i(c_i(x))+\sum_{i\in\Ical,c_i(x)<0}\lambda_i |c_i(x)|=0 &.\label{kkt comp opt}
		\end{align}
	\end{subequations}
	A first-order stationary point $x$ for \eqref{prob.nlp} thus can be presented as a stationary point of the penalty function with $\rho>0$ and satisfying $v(x)=0$. 
	Let $\lambda=[\lambda_\Ecal^T,  \lambda_\Ical^T]^T$ be the multipliers satisfying 
	\eqref{kkt error opt}--\eqref{kkt comp opt}.
	Such a point $(x,\lambda)$ is called stationary for \eqref{prob.nlp} since  it corresponds to a Karush-Kuhn-Tucker (KKT)
	point $(x, \lambda/\rho)$ for \eqref{prob.nlp} \cite{karush2014minima, kuhn2014nonlinear}.  Also notice that \eqref{kkt opt} with $\rho=0$ can be deemed as an equivalent statement of condition \eqref{infeasible stationary}. 

	\subsection{Subproblems}
	We now describe our technique for search direction computation
	which involves the inexact solution of subproblems that are constructed using merely the first-order information of \eqref{prob.nlp}. 
	
	At the $k$th iteration, the algorithm seeks to measure the possible improvement in minimizing the linearized model \cite{pshenichnyj2012linearization} $l(d; \rho, x^k)$  of the penalty function $\phi(x; \rho)$ at $x^k$
	\begin{equation}
	l(d; \rho, x^k) := \rho \langle \nabla f(x^k),d \rangle + \sum_{i\in\Ecal\cup\Ical} v_i( c_i(x^k) +\langle \nabla c_i(x^k), d\rangle),
	\end{equation}
	where  the constant $\rho f(x^k)$ is omitted  for the ease of presentation.
	The local model $l(\cdot ; \rho, x^k)$ is convex and the subdifferential of $l(\cdot ; \rho, x^k)$ at $d$ is given by 
	\begin{equation}\label{eq.sub.kkt}
	\partial l(d ; \rho, x^k) =  \rho \nabla f(x^k) + \sum_{i\in\Ecal\cup\Ical} \nabla c_i(x^k) \partial_c v_i(c_i(x^k)+\langle \nabla c_i(x^k), d\rangle).\end{equation}
	In particular, its subdifferential at $d=0$ coincides with the Clarke's generalized subdifferential of $\phi( \cdot ; \rho)$ at $x=x^k$
	\begin{equation}\label{sub eq general}
	\partial l(0; \rho, x^k) =  \bar \partial \phi(x^k; \rho)\quad\text{and}\quad \partial l(0; 0, x^k) = \bar \partial v(x^k).
	\end{equation}
	The subproblem solver aims to find a direction $d^k$ that yields a nonnegative reduction in $l(\cdot; \rho, x^k)$ 
	and $l(\cdot; 0, x^k)$, i.e., 
	\[
	\begin{aligned}
	\Delta l (d^k; \rho, x^k) &:= l(0; \rho, x^k) - l(d^k; \rho, x^k) \ge 0\  \text{ and  }\\
	\Delta l (d^k; 0, x^k) &:= l(0; 0, x^k) - l(d^k; 0, x^k) \ge 0.
	\end{aligned}
	\]
	Such a direction  $d^k$ can be found   as an ({approximate}) minimizer of $l(\cdot; \rho_k, x^k)$ for appropriate  $\rho_k \in (0,\rho_{k-1}]$ over a convex set $X \subseteq \mathbb{R}^n$ containing $\{0\}$, i.e.,
	\begin{equation}\tag{\mbox{$\mathscr{P}$}}\label{primal prob}
	d^k := \arg\min_{d \in X}\ l(d; \rho_k, x^k)\ \ \text{for some}\ \ \rho_k \in (0,\rho_{k-1}].
	\end{equation}
	Here 
	we introduce the set $X$  for   imposing  a trust region to prevent infinite steps, which is    defined as
	$X: = \{d : \|d\| \leq \delta\}$ for  trust region radius $\delta > 0$ and  $\|\cdot\|$  
	satisfying 
	\begin{equation}\label{norm equivalent}
	\|x\|_2 \le \kappa_0 \|x\|_*,\quad \forall x\in\mathbb{R}^n
	\end{equation}
	for  constant $\kappa_0>0$ with  {\color{blue}$\|\cdot\|_*$} denoting  the dual norm of $\|\cdot\|$.  
	The most popular choice for  $\|\cdot\|$  is the $\ell_2$-norm and $\ell_\infty$-norm.   
	The latter  option   results in a linear optimization subproblem, but it is \emph{not} a requirement in our algorithm.  
	We refer to \eqref{primal prob} with $\rho>0$ as the \emph{penalty subproblem} and  \eqref{primal prob} with $\rho=0$ as the \emph{feasibility subproblem}.  In   the remainder of this paper, let $d^*(\rho, x)$ denote a minimizer of $l(d; \rho, x)$. 
	
	To alleviate the computational burden of  solving subproblems \eqref{primal prob} exactly, our algorithm accepts an inexact solution $d^k$ of 
	\eqref{primal prob} as long as it yields sufficient reduction in $l$ compared with the optimal solution of \eqref{primal prob}, i.e., 
	\begin{subequations}
		\begin{align}
		\Delta l(d^k; \rho, x^k) + \gamma_k & \ge \beta_\phi  [\Delta l(d^*(\rho,x^k); \rho, x^k)   + \gamma_k  ],\label{red cond opt}\\
		\Delta l(d^k; 0, x^k) + \gamma_k  & \ge \beta_v  [\Delta l(d^*(\rho,x^k); 0, x^k)  + \gamma_k]    \label{red cond fea}
		\end{align} 
	\end{subequations}
	where $\beta_\phi, \beta_v \in (0,1)$ with $\beta_v < \beta_\phi$ are prescribed constants and $\gamma_k \in\mathbb{R}_+$ is the relaxation error.  Further details of $\gamma_k$ will be discussed in \S\ref{sec.update}.
	
	It is often impractical to verify conditions \eqref{red cond opt} and \eqref{red cond fea}, since it requires the exact optimal solution of subproblems.
	In our algorithm, we can indirectly evaluate the optimal value  by using the dual value.  
	The Lagrangian dual of~\eqref{primal prob} is
	\begin{equation}\tag{\mbox{$\mathscr{D}$}} \label{dual prob}
	\max_{-e\le \lambda_{\Ecal} \le e,  0\le \lambda_{\Ical} \le e}\ p(\lambda; \rho, x^k):= -   \delta    \|\rho \nabla f(x^k)+\sum_{i\in\Ecal\cup\Ical}\lambda_i \nabla c_i(x^k) \|_* +  \langle c(x^k), \lambda\rangle,
	\end{equation}
	where $\lambda=[\lambda_\Ecal^T, \lambda_\Ical^T]^T$, $c(x) = [c_\Ecal(x)^T, c_\Ical(x)^T]^T$ and 
	$e$ is the vector of all 1s with appropriate dimension. If $\lambda$ is dual feasible, then   by weak duality, 
	\begin{equation}\label{weak.duality}
	p(\lambda; 0, x_k) \leq l(d; 0, x_k)\ \ \text{and}\ \ p(\lambda; \rho, x_k) \leq l(d; \rho, x_k).
	\end{equation}
	Using the dual values, we can require the direction $d^k$ to satisfy 
	\begin{subequations}
		\begin{align}
		\Delta l(d^k; \rho, x^k) + \gamma_k  & \ge \beta_\phi  [ l(0; \rho, x^k) - p(\lambda^k; \rho, x^k)  + \gamma_k ]  \label{red cond opt1}\\
		\Delta l(d^k; 0, x^k) + \gamma_k  & \ge \beta_v  [ l(0; 0, x^k) - p(\lambda^k; 0, x^k) + \gamma_k ],      \label{red cond fea1}
		\end{align} 
	\end{subequations}
	for current dual feasible estimate $\lambda^k$,  so that   
	\eqref{red cond opt} and \eqref{red cond fea} are  enforced   {\color{blue}to be} satisfied.

	An interesting aspect  to notice is that $p(\lambda; \rho, x^k)$ consists of two parts. The first term is in fact the KKT optimality residual at $(x^k, \lambda)$ scaled by the trust region radius $\delta$, while for $\lambda_i \in \partial_c v_i(c_i(x^k))$, the second  
	term $\langle c(x^k), \lambda\rangle = v(x^k)$ describes the complementarity, so that  
	the problem \eqref{dual prob} is indeed seeking to minimize the KKT residual at $x^k$.

	Before proceeding to the design of penalty parameter updating strategy, we first provide a couple of results related to our subproblems and their solutions.   

	\begin{lemma}\label{lem.subproblem}
		The following statements hold at any $x$ with $\delta > 0$.
		\begin{enumerate}
			\item[(i)]  $\Delta l ( d^*(0,x);0, x) \ge 0$ where the equality holds   if and only if $x$ is stationary for $v(\cdot)$.
			\item[(ii)]  $\Delta l ( d^*(\rho,x);\rho, x) \ge 0$ where the equality holds if and only if $x$ is stationary for $\phi(\cdot; \rho)$.  
			\item[(iii)] If $\Delta l ( d^*(\rho,x);\rho, x) = 0$ for $\rho > 0$ and $v(x)=0$, then $x$ is a KKT point for \eqref{prob.nlp}. 
		\end{enumerate}
	\end{lemma}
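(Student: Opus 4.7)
The plan is to leverage two facts about the subproblem: (a) since $\delta>0$, the origin $d=0$ is feasible for \eqref{primal prob} and in fact lies in the interior of $X$; (b) the objective $l(\cdot;\rho,x)$ is convex, so global optimality of $d^*(\rho,x)$ reduces to a clean subdifferential inclusion without any trust region multiplier. Together with the identity $\partial l(0;\rho,x)=\bar\partial\phi(x;\rho)$ and $\partial l(0;0,x)=\bar\partial v(x)$ recorded in \eqref{sub eq general}, this is essentially all the machinery we need.

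For part (i), I would first note that $d=0\in X$ so $l(d^*(0,x);0,x)\le l(0;0,x)$, giving $\Delta l(d^*(0,x);0,x)\ge 0$. For the equality characterization, the ``if'' direction uses that $0\in\bar\partial v(x)=\partial l(0;0,x)$ together with convexity of $l(\cdot;0,x)$, implying $d=0$ is a (global) minimizer of the unconstrained convex problem and hence of \eqref{primal prob}; thus $\Delta l(d^*(0,x);0,x)=0$. For the ``only if'' direction, equality means $d=0$ is an optimal solution of \eqref{primal prob}; since $0$ is in the interior of $X$, the first-order optimality condition collapses to $0\in\partial l(0;0,x)=\bar\partial v(x)$, i.e.\ $x$ is stationary for $v$. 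Part (ii) follows by the identical argument with $\rho>0$ in place of $0$, invoking $\partial l(0;\rho,x)=\bar\partial\phi(x;\rho)$.

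For part (iii), by part (ii) we already have $0\in\bar\partial\phi(x;\rho)=\rho\nabla f(x)+\bar\partial v(x)$, so there exist multipliers $\lambda_i\in\partial_c v_i(c_i(x))$ such that $\rho\nabla f(x)+\sum_i\lambda_i\nabla c_i(x)=0$, which is \eqref{kkt error opt}. The inclusion in \eqref{fea comp} immediately gives \eqref{kkt dual feasible opt}. To obtain the complementarity condition \eqref{kkt comp opt}, I would use $v(x)=0$ to eliminate terms: no index has $c_i(x)>0$; for $i\in\Ecal$ one has $c_i(x)=0$ so $v_i(c_i(x))=0$; and for $i\in\Ical$ with $c_i(x)<0$, the rule \eqref{fea comp} forces $\lambda_i=0$, so the corresponding term vanishes. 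All three sums in \eqref{kkt comp opt} are therefore zero, and $(x,\lambda/\rho)$ is a KKT point for \eqref{prob.nlp}.

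The only mildly delicate step is justifying that the trust region multiplier does not appear in the optimality condition when $\Delta l=0$: this rests on $0$ being in the relative interior of $X$ for $\delta>0$, so the normal cone of $X$ at $d=0$ is $\{0\}$ and the KKT conditions for the convex constrained problem reduce to $0\in\partial l(0;\rho,x)$. Once that is in place, the rest of the argument is bookkeeping on the piecewise structure in \eqref{fea comp}.
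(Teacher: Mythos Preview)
Your proposal is correct and follows essentially the same route as the paper: both arguments use that $d=0$ is feasible to get the nonnegativity, invoke convexity and the identity \eqref{sub eq general} to translate the equality case into $0\in\bar\partial v(x)$ (resp.\ $0\in\bar\partial\phi(x;\rho)$), and then unpack the subdifferential for part~(iii). Your explicit remark that the normal cone of $X$ at the interior point $d=0$ is $\{0\}$ is a welcome clarification of a step the paper leaves implicit.
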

	
	\begin{proof}
		Note that   $\Delta l ( d^*(0,x); 0, x) = v(x) -l ( d^*(0,x); 0, x) \ge  v(x) -l (0; 0, x) =0$.  Now we investigate the {\color{blue}case where}   the equality holds: 
		\[ 
		\begin{aligned} l (0; 0, x)  = l ( d^*(0,x); 0, x) 
		& \iff 0 \text{ is stationary for } l(d; 0, x)\\
		& \iff 0 \in  {\color{blue} \partial l(d; 0, x)}\\
		& \iff 0 \in \bar \partial v(x)\\
		& \iff x \text{ is stationary for } v(x),
		\end{aligned}
		\]
		where the last equivalence is by \eqref{sub eq general}.
		This proves $(i)$.  
		
		Following the same argument for $l(d; \rho, x)$ and $\phi(x;\rho)$ using \eqref{sub eq general}, $(ii)$ holds true. 
		
		To prove $(iii)$, we know from $(ii)$ that the condition in $(iii)$ means $x$ is stationary for $\phi(x; \rho)$. Therefore, there must exist 
		$\lambda_i \in \partial_cv(c_i(x))$ such that $0=\rho \nabla f(x) + \sum_{i\in\Ecal\cup\Ical}\lambda_i \nabla c_i(x)$, which is equivalent to 
		\[   \nabla f(x) + \sum_{i\in\Ecal\cup\Ical} (\lambda_i/\rho) \nabla c_i(x) =0.\]
		Furthermore, it follows from $\lambda_i \in \partial_cv(c_i(x))$ that 
		\[ \lambda_i \ge 0, i\in \Ical, \quad \text{with}\quad \lambda_i = 0, \text{ if } c_i(x) < 0 \text{ and }  i\in\Ical,\]
		meaning the complementary condition is satisfied. 
		The constraints are all satisfied since $v(x)=0$.  Overall, we have shown that $x$ is a KKT point with multipliers 
		$\lambda_i/\rho$, $i\in\Ecal\cup\Ical$, as desired. 
	\end{proof}

	Overall, the $k$th iteration of our proposed method proceeds as in Algorithm~\ref{alg.sqo}. 
	A   direction and penalty parameter pair $(d^k,\rho_k)$ is  firstly  computed by a subproblem solver such that $d^k$ yields reductions that satisfy our conditions  \eqref{red cond opt1} and \eqref{red cond fea1}.  
	Then a line search is executed to find a step size ${\color{blue}\alpha_k}$.
	Finally, the new iterate is set as $x^{k+1} \gets x^k +  {\color{blue}\alpha_k} d^k$ and the algorithm proceeds to the next iteration.   
	The proposed first-order method for nonlinear constrained optimization, hereinafter nicknamed   \texttt{FoNCO}, is presented as  {\color{blue}Algorithm~\ref{alg.sqo}}. 
	
	\begin{algorithm}
		\caption{\emph{F}irst-\emph{o}rder methods for \emph{N}onlinear \emph{C}onstrained \emph{O}ptimization (\texttt{FoNCO})  }
		\label{alg.sqo}
		\begin{algorithmic}[1]
			\STATE Require $\{\gamma_0,   \theta_\alpha,  \beta_\alpha \} \in (0,1)$, $ \rho_{-1}\in (0,\infty)$.
			\STATE Choose $x^0 \in \mathbb{R}^n$.
			\FOR{$k \in \mathbb{N}$}
			\STATE Solve  \eqref{primal prob}   (approximately) to obtain $(d^k, \rho_k) \in \mathbb{R}^n \times (0,\rho_{k-1}]$
			\STATE \qquad or stop if a stationarity certificate is satisfied. \label{step.LP}
			\STATE Let ${\color{blue}\alpha_k}$ be the largest value in $\{ \theta_\alpha^0,  \theta_\alpha^1,  \theta_\alpha^2,  \ldots\}$ such that 
			\begin{equation}\label{LS condition} \phi(x^k; \rho_k)-\phi(x^k+ {\color{blue}\alpha_k} d^k; \rho_k) \ge \beta_\alpha {\color{blue}\alpha_k  \Delta l(d^k; \rho_k, x^k)}.
			\end{equation}
			\STATE  Set $x^{k+1} = x^k + {\color{blue}\alpha_k} d^k$, choose $\gamma_{k+1} \in (0,  \gamma_k]$ 
			\ENDFOR
		\end{algorithmic}
	\end{algorithm}
	

	\subsection{Penalty parameter update}\label{sec.update}
	
	At the $k$th iteration, the value $\rho_k \in (0,\rho_{k-1}]$ needs to be updated to ensure the direction satisfies \eqref{red cond opt1} and \eqref{red cond fea1} as described in the previous subsection can successfully be found. 
	The updating strategy for   penalty parameter $\rho$  consists of two phases.  
	The first phase   occurs within the {\color{blue}subproblem solver} and the second phase  happens after solving the subproblem. 
	
	For ease of presentation, we drop the iteration number $k$ and utilize the following shorthand notation 
	\begin{equation}\label{eq.shorthand}
	g = \nabla f(x^k),\ a^i = \nabla c_i(x^k),\ b_i = c_i(x^k),\ A = [a^1, \cdots, a^m]^T, 
	\end{equation}
	$l(d; \rho)$ and $p(\lambda; \rho)$ for the $k$th primal and dual subproblem objectives, respectively.
	Now   \eqref{primal prob} can be written as  
	\begin{equation}\tag{\mbox{$\mathscr{P}'$}}\label{primal sub}
	\min_{d \in X}\ l(d;\rho),\ \ \text{where}\ \ l(d;\rho) = \langle \rho g, d\rangle +   l(d;0),
	\end{equation}
	with its dual  \eqref{dual prob} written as 
	\begin{equation}\tag{\mbox{$\mathscr{D}'$}}\label{dual sub}
	\max_{-e\le \lambda_{\Ecal} \le e,  0\le \lambda_{\Ical} \le e}\ p(\lambda; \rho):= -  \delta\|\rho g+\sum_{i\in\Ecal\cup\Ical}\lambda_i a^i \|_* + \langle b, \lambda\rangle. 
	\end{equation}
	After dropping the iteration number $k$,   we can use  $(d_{\rho}^*,\lambda_{\rho}^*)$ to 
	represent an optimal primal-dual pair for the penalty subproblem corresponding to $\rho$; in particular, $(d_0^*,\lambda_0^*)$ represents an optimal primal-dual pair for the feasibility subproblem.  Meanwhile, we use super/sub-script $(j)$ to denote the $j$th iteration of the subproblem solver. 
	
	We are now ready to introduce the first phase of the $\rho$ updating strategy.  Suppose the subproblem solver generates  a sequence of 
	primal-dual iterates 
	$\{d^{(j)}, \lambda^{(j)}, \nu^{(j)} \}$ where $d^{(j)}$ represents the feasible solution estimate for the primal penalty subproblem, 
	$\lambda^{(j)}$ and $\nu^{(j)}$ are the dual feasible solution estimate for the dual penalty and feasibility subproblems, respectively. 
	First of all, we assume the subproblem solver finds primal solution estimate $d^{(j)}$ no worse than a trivial zero step 
	\begin{equation}\label{eq.primal_no_worse_than_zero}
	l(d^{(j)}; \rho_{(j)}) \leq l(0; \rho_{(j)}),
	\end{equation}
	the feasibility dual solution $\nu^{(j)}$ is no worse than the penalty dual solution $\lambda^{(j)}$ and both of them are no worse than 
	the initial penalty dual solution
	\begin{equation}\label{eq.dual_no_worse}
	p(\nu^{(j)}; 0) \geq p(\lambda^{(j)}; 0) \geq p(\lambda^{(0)}; 0) > - \infty.
	\end{equation}
	Both of these are reasonable assumptions.  If \eqref{eq.primal_no_worse_than_zero}
	were not to hold, 
	Lemma~\ref{lem.subproblem} indicates that this must be a certificate of stationarity for optimality or infeasibility. 
	For dual iterates, one can simply use $\nu^{(j)} = \lambda^{(j)} = \lambda^{(0)}$ if there is no better dual estimate than the initial dual estimate.

	Now, at the $j$th iteration of the subproblem solver, two ratios are defined: 
	\begin{equation}
	r_v^{(j)} := \frac{l^{(0)}_ \gamma - l(d^{(j)}; 0) }{l^{(0)}_ \gamma - (p(\nu^{(j)}; 0))_+}\ \ \text{and}\ \ r_\phi^{(j)} := \frac{l^{(0)}_ \gamma - l(d^{(j)};\rho_{(j)})}{l^{(0)}_ \gamma - p(\lambda^{(j)}; \rho_{(j)})},
	\end{equation}
	here   $l^{(0)}_ \gamma := l^{(0)} +  \gamma$
	with $ \gamma> 0$ and 
	$
	l^{(0)} := l(0; \rho) = l(0; 0) = \sum_{i\in\Ecal \cup \Ical} v_i(b_i)  = v(x^k) \geq 0
	$
	being the primal penalty and feasibility subproblem objectives at $d=0$
	for any $ \rho\ge 0$. 
	Note that the numerators of ratios $r_v^{(j)}$ and 
	$r_\phi^{(j)}$ are positive due to the presence of $\gamma$.  
	If at the $j$th iteration, we have 
	\begin{equation}\tag{\mbox{R$_v$}}\label{red.fea}
	r_v^{(j)} \ge \beta_v, 
	\end{equation}
	then the model reduction must satisfy 
	\begin{equation}\label{alter.red.constr}
	\begin{aligned}
	l_ \gamma^{(0)} - l(d^{(j)}; 0) & \geq  \beta_v(l^{(0)}_ \gamma - ( p(\nu^{(j)}; 0))_+)\\
	&\geq \beta_v(l^{(0)}_ \gamma - p(\lambda_0^*; 0)) = \beta_v(l^{(0)}_ \gamma - l(d_0^*; 0)),
	\end{aligned}
	\end{equation}
	where the first and second inequality follows by \eqref{red.fea} and the optimality of~$\lambda_0^*$ with respect to the feasibility subproblem (which is known that $p(\lambda_0^*,0)  \geq 0$) respectively, and the third one follows by strong duality. Thus condition \eqref{red cond opt} is satisfied.  
	In a similar way, if at the $j$th iteration,  we have
	\begin{equation}\tag{\mbox{R$_\phi$}}\label{red.penalty}
	r_\phi^{(j)} \geq \beta_\phi,
	\end{equation}
	then it follows that
	\begin{equation}\label{alter.red.penalty}
	\begin{aligned}
	l^{(0)}_ \gamma - l(d^{(j)}; \rho_{(j)})
	&\geq \beta_\phi(l^{(0)}_ \gamma - p(u^{(j)}; \rho_{(j)})) \\
	&\geq \beta_\phi(l^{(0)}_ \gamma - p(\lambda_{\rho_{(j)}}^*; \rho_{(j)})) = \beta_\phi(l^{(0)}_ \gamma - l(d_{\rho_{(j)}}^*; \rho_{(j)})).
	\end{aligned}
	\end{equation}
	Thus condition \eqref{red cond fea} is satisfied.

	As discussed above, the values of ratios $r_v^{(j)}$ and $r_\phi^{(j)}$ reflect the inexactness of  current primal-dual $\{d^{(j)}, \lambda^{(j)}, \nu^{(j)} \}$. We need another ratio to measure the satisfaction of the  complementarity.  Define the index sets
	\[ \begin{aligned}
	\Ecal_+(d) & := \{ i \in \Ecal : \langle  a^i, d\rangle +  b_i > 0 \},\\
	\Ecal_- (d) & := \{ i\in  \Ecal : \langle  a^i, d\rangle +  b_i < 0 \},\\	
	\text{and}\ \ \Ical_+(d)  & := \{ i\in \Ical : \langle  a^i, d\rangle +  b_i > 0\}.
	\end{aligned}
	\]
	The complementarity measure can be defined accordingly:
	\[\chi(d,\lambda):=
	\sum_{i\in\Ecal_+(d)\cup\Ical_+(d)} (1- \lambda_i) v_i(\langle a^i, d\rangle + b_i)  + \sum_{i\in\Ecal_-(d)}( 1+\lambda_i)  v_i(\langle a^i, d\rangle + b_i).
	\]
	With an optimal primal-dual solution $(d_\rho^*, \lambda_\rho^*)$ for a penalty subproblem, one has $\lambda_i(\lambda_\rho^*) = 1$ for $i \in \Ecal_+(d_\rho^*)$, $\lambda_i(\lambda_\rho^*) = -1$ for $i \in \Ecal_-(d_\rho^*)$, and $\lambda_i(\lambda_\rho^*) = 1$ for $i \in \Ical_+(d_\rho^*)$, from which it follows that $\chi(d_\rho^*,\lambda_\rho^*)=0$.
	For an inexact solution, we   require that $(d^{(j)}, \lambda^{(j)})$ satisfies
	\[ \chi^{(j)} := \chi(d^{(j)},\lambda^{(j)})  \le (1 - \beta_v)^2l^{(0)}_ \gamma,\]
	or, equivalently, 
	\begin{equation}\tag{\mbox{R$_c$}}\label{red.comp}
	r_c^{(j)} := 1- \sqrt{\frac{ \chi^{(j)}}{l^{(0)}_ \gamma}} \ge \beta_v.
	\end{equation}
	Note that the numerator in $r_c^{(j)}$ is always positive due to the presence of $\gamma > 0$.
	{\color{blue}
		Therefore, for a given $ \gamma \in (0,\infty)$, \eqref{red.comp} will hold for sufficiently accurate primal-dual solutions of the penalty subproblem.  
	}

	For a fixed $\rho$,  conditions  \eqref{red.penalty} and  \eqref{red.comp} will eventually be satisfied as the subproblem algorithm proceeds.  However, this may not be the case for condition \eqref{red.fea}. When this happens,  $d^k$ is deemed to be a   ``successful'' inexact direction for minimizing the penalty function, but an ``unsuccessful'' direction for improving feasibility. The intuition underlying this phenomenon is that a large penalty parameter places too much emphasis on the objective function---a reason to reduce the penalty parameter. 
	Thus we can update the parameter while solving the subproblem as follows. 
	Given 
	\begin{equation}\label{eq.parameters}
	0 < \beta_v < \beta_\phi < 1,
	\end{equation}
	we initialize $\rho_{(0)} \gets \rho_{k-1}$ 
	(from the preceding iteration of the outer iteration) and apply the subproblem solver to \eqref{primal sub} to generate $\{(d^{(j)},\lambda^{(j)},\nu^{(j)})\}$. We continue to iterate toward solving \eqref{primal sub} 
	until  \eqref{red.penalty} and \eqref{red.comp} are satisfied. Then we terminate the subproblem algorithm if \eqref{red.fea} is also satisfied or reduce the penalty parameter by setting
	\begin{equation}\label{update.rho}
	\rho_{(j+1)} \gets \theta_\rho \rho_{(j)}
	\end{equation}
	for some prescribed $\theta_\rho\in (0,1)$.

	
	It is possible that \eqref{red.penalty}, \eqref{red.comp}, and \eqref{red.fea} all hold with $d^{(j)} = 0$ causing the subproblem solver takes a null step. In such a case, {\color{blue} we have the subproblem solver terminate} with $d^{(j)} = 0$, causing the outer iteration  to take a null step in the primal space.  This would be followed by a decrease in $ \gamma$, prompting  the outer iteration  to eventually make further progress through solving the subproblem or terminate with a stationarity certificate.  
	
	On the other hand, if $x^k$ is not stationary with respect to $\phi(\cdot,\rho)$ for any $\rho \in (0,\rho_{k-1}]$, but is stationary with respect to $v$, then for $(d_0^*,\lambda_0^*)$ one has
	\[
	\frac{l^{(0)}_ \gamma - l(d_0^*; 0)}{l^{(0)}_ \gamma - (p(\lambda_0^*; 0))_+} = \frac{ \gamma}{ \gamma} = 1,
	\]
	meaning that $r_v^{(j)} > \beta_v$ for $(d^{(j)},\nu^{(j)})$ in a neighborhood of $(d_0^*,\lambda_0^*)$.  One should expect that \eqref{dust} would only reduce the penalty parameter a finite number of times during one {\color{blue}subproblem solver}.  If $x^k$ is near an infeasible stationary point, this may happen consecutively for many subproblems. This may quickly drive the penalty to 0, leading to an infeasible stationary point.

	{\color{blue}Now we introduce the  dynamic updating strategy (DUST) \cite{burke2018dynamic} stated as:
		\begin{equation}\tag{DUST}\label{dust}
		\boxed{ 
			\baligned
			& \text{\small Given $\rho_{(j)}$ and the $j$th iterate $(d^{(j)},\lambda^{(j)},\nu^{(j)})$, perform the following:} \\
			& \text{\small \qquad $\bullet$ if 
				\eqref{red.penalty}, \eqref{red.comp}, and \eqref{red.fea} hold, then terminate;} \\
			& \text{\small \qquad $\bullet$ else if \eqref{red.penalty} and \eqref{red.comp} hold, but \eqref{red.fea} does not, then apply \eqref{update.rho};} \\
			& \text{\small \qquad $\bullet$ else set $\rho_{(j+1)} \gets \rho_{(j)}$.}
			\ealigned
		}
		\end{equation}}
	
	After solving the subproblem, we consider the second phase of the $\rho$ updating strategy.  Let $\tilde\rho_k$ be the value of the penalty parameter obtained by applying \eqref{dust} within the $k$th {\color{blue}subproblem solver}.  Then, given a constant $\beta_l \in (0,\beta_\phi(1-\beta_v)]$, we require $\rho_k \in (0,\tilde\rho_k]$ so that
	\begin{equation}\label{dust.after}
	\Delta l(d^k; \rho_k, x^k) +  \gamma_k \geq \beta_l (\Delta l(d^k; 0, x^k)+ \gamma_k),
	\end{equation}
	where the right-hand side of this inequality is guaranteed to be positive due to \eqref{red.fea}.  This can be guaranteed by the following  \emph{P}osterior \emph{S}ubproblem \emph{ST}rategy:
	\begin{equation}\tag{PSST}\label{psst}
	\boxed{
		\rho_k \gets \begin{cases} \tilde\rho_k  & \text{if this yields \eqref{dust.after}}  \\
		\cfrac{(1-\beta_l) ( \Delta l(d^k; 0, x^k) +  \gamma_k) }{\langle \nabla f(x^k), d^k\rangle   } & \text{otherwise.}
		\end{cases}
	}
	\end{equation}
	It is possible that $\langle \nabla f(x^k), d^k\rangle \le 0$, implying 
	\[\Delta l(d^k; \tilde\rho_k, x^k) = - \langle \tilde\rho_k\nabla f(x^k), d^k\rangle  +  \Delta l(d^k; 0, x^k) \ge \Delta l(d^k; 0, x^k),\]
	so that \eqref{dust.after} is always true by setting $\rho_k = \tilde\rho_k$. Thus the denominator in the latter formula \eqref{psst} is always positive. 
	On the other hand, if the choice $\rho_k = \tilde\rho_k$ does not yield \eqref{dust.after}, then, by setting $\rho_k$ according to the latter formula in \eqref{psst}, it follows   that
	\[ \rho_k\langle \nabla f(x^k), d^k\rangle \le (1-\beta_l)(\Delta l(d^k; 0, x^k) +  \gamma_k),\]
	which means that
	\[ \Delta l(d^k; \rho_k, x^k) +  \gamma_k =  \Delta l(d^k;0, x^k) - \rho_k\langle \nabla f(x^k), d^k\rangle +  \gamma_k \ge \beta_l(\Delta l(d^k; 0,  x^k) +  \gamma_k),\]
	implying that \eqref{dust.after} holds.
	This idea is similar to the updating strategy in    various algorithms  employing a merit function such as  \cite{burke2014sequential}.  The difference  is that this model reduction condition is imposed inexactly (due to the presence of $ \gamma_k > 0$), making \eqref{psst} more suitable for an inexact  algorithmic framework.

	We  summarize the framework of a subproblem solver employing the \eqref{dust} and \eqref{psst} in Algorithm~\ref{alg.subproblem}.

	\begin{algorithm}[H]
		\caption{A Framework of Subproblem Algorithm for Solving \eqref{primal prob}. }
		\label{alg.subproblem}
		\begin{algorithmic}[1]
			\STATE Require $(\gamma_k,\beta_\phi) \in (0,1)$, $\beta_v\in (0,\beta_\phi)$,  $\beta_l \in (0,\beta_\phi(1-\beta_v))$ and $(\rho_{-1}, \gamma_0) \in (0,\infty)$
			\STATE Set $\rho_{(0)} \gets \rho_{k-1}$
			\FOR{$j \in \mathbb{N}$} 
			
			\STATE Generate a primal-dual feasible  solution estimate $(d^{(j)}, \lambda^{(j)}, \nu^{(j)})$
			\STATE Set $\rho_{(j+1)}$ by applying \eqref{dust}
			\ENDFOR
			\STATE  Set $d^k \gets d^{(j)}$ and $\tilde \rho_k \gets \rho_{(j)}$.
			\STATE Set $\rho_k$ by applying \eqref{psst}
			
		\end{algorithmic}
	\end{algorithm}

	\section{Convergence analysis} \label{s.Gca}
	
	In this section, we analyze the behavior of our proposed algorithmic framework. We first show that   if \eqref{dust} is employed within an algorithm for solving \eqref{primal sub}, then, under reasonable assumptions on the subproblem data, it is only triggered finite number of times.  
	The second part of this section focuses on the global convergence, which {\color{blue}shows} that our proposed algorithm will either converge to a stationary point if \eqref{prob.nlp} is feasible  or an infeasible stationary point if \eqref{prob.nlp} is  {\color{blue}locally} infeasible under general assumptions. 
	
	One of the contributions in this paper is the complexity analysis for the proposed method.  
	We  derive the worst-case complexity analysis of the KKT residuals for both the nonlinear optimization problem \eqref{prob.nlp}  and the feasibility problem \eqref{prob.fea}.  Local complexity analysis  for   constraint violation is also proved at the last part of this section.

	\subsection{Worse-case complexity for a single subproblem}\label{sec.bound.rhoj}
	
	The goal of this subsection is to show  that the subproblem solver terminates after reducing the penalty parameter for a finite number of times by employing the \eqref{dust} within the subproblem solver for solving \eqref{primal sub}.  Specifically,  we can show that there exists a sufficiently small $\tilde\rho$ such that 
	for any   $\rho\in(0,\tilde\rho]$, if \eqref{red.penalty} and \eqref{red.comp} are satisfied, then \eqref{red.fea} is also satisfied---a criterion that \eqref{dust} will not be triggered and the subproblem solver should be terminated at this moment.   
	Our complete subproblem algorithm utilizing  \eqref{dust} and \eqref{psst} is stated  as Algorithm~\ref{alg.subproblem}.  It should be clear that in the inner loop (over $j$) one is solving a subproblem with quantities dependent on the $k$th iterate of the main algorithm; see \eqref{eq.shorthand}.

	The assumption needed for this analysis is simply the primal-dual feasibility of the iterates, which is formulated as the following. 
	
	\begin{assumption}\label{ass.subproblem}
		For all $j \in \mathbb{N}$, the sequence $\{(d^{(j)},\lambda^{(j)}), \nu^{(j)}\}$ has $d^{(j)} \in X$, $\lambda^{(j)}$ and $\nu^{(j)}$ are feasible for \eqref{dual sub},  and \eqref{eq.primal_no_worse_than_zero}   and \eqref{eq.dual_no_worse} hold.
	\end{assumption}

	We first show that the differences between the primal and dual values of the optimality and feasibility subproblems are bounded with respect to $\rho$. Therefore, as $\rho$ tends sufficiently small, the optimality primal (dual) subproblem will approaches the feasibility primal (dual) subproblem.   
	
	\begin{lemma}\label{lem primal dual value}
		Under Assumptions~\ref{ass.subproblem}, it follows that, for any $j\in \mathbb{N}$,
		\begin{subequations}
			\begin{align}
			| l (d^{(j)}; \rho_{(j)}) - l (d^{(j)}; 0) | & \le \kappa_2 \rho_{(j)} \label{J.krho}\\ \text{and}\ \ 
			|p(\lambda^{(j)}; \rho_{(j)}) - p(\lambda^{(j)}; 0) | & \le \kappa_3 \rho_{(j)},\label{D.krho}
			\end{align}
		\end{subequations}
		with $\kappa_2:= \kappa_0\delta\|g\|_2$ and $\kappa_3=\delta\|g\|$. In particular, $\kappa_2 = \delta\|g\|_2$ if $\|\cdot\| = \|\cdot\|_2$ and 
		$\kappa_2 = \sqrt{n} \delta \|g\|_2$ if $\|\cdot\| = \|\cdot\|_\infty$. 
	\end{lemma}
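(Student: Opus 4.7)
The plan is to attack the two bounds completely separately, since the primal bound \eqref{J.krho} only sees the linear term $\rho\langle g,d\rangle$ in $l(\cdot;\rho)$, whereas the dual bound \eqref{D.krho} only sees how $\rho g$ perturbs the dual-norm term inside $p(\lambda;\rho)$. Both are essentially one-line consequences of Hölder (or the reverse triangle inequality) combined with the feasibility assumed in Assumption \ref{ass.subproblem}, so the argument should be short.

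For \eqref{J.krho}, I would start from the identity
\[
l(d^{(j)};\rho_{(j)}) - l(d^{(j)};0) \;=\; \rho_{(j)}\,\langle g, d^{(j)}\rangle,
\]
which is immediate from the definition \eqref{primal sub}. Taking absolute values and applying Cauchy--Schwarz gives $|\langle g,d^{(j)}\rangle|\le \|g\|_2\,\|d^{(j)}\|_2$. Assumption \ref{ass.subproblem} provides $d^{(j)}\in X$, hence $\|d^{(j)}\|\le\delta$, and the norm-equivalence relation \eqref{norm equivalent} converts $\|d^{(j)}\|_2$ into $\kappa_0\delta$. This yields $|l(d^{(j)};\rho_{(j)}) - l(d^{(j)};0)|\le \kappa_0\delta\|g\|_2\,\rho_{(j)}$, which is exactly $\kappa_2\rho_{(j)}$. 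The two special cases follow by plugging in $\kappa_0=1$ for $\|\cdot\|=\|\cdot\|_2$ and $\kappa_0=\sqrt{n}$ for $\|\cdot\|=\|\cdot\|_\infty$, since these are the sharp constants in the relevant norm comparison against $\|\cdot\|_2$.

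For \eqref{D.krho}, I would write
\[
p(\lambda^{(j)};\rho_{(j)}) - p(\lambda^{(j)};0) \;=\; -\delta\Bigl[\,\bigl\|\rho_{(j)} g + \textstyle\sum_i \lambda_i^{(j)} a^i\bigr\|_* - \bigl\|\textstyle\sum_i \lambda_i^{(j)} a^i\bigr\|_*\Bigr],
\]
since the inner-product term $\langle b,\lambda^{(j)}\rangle$ is independent of $\rho_{(j)}$ and cancels. The reverse triangle inequality applied to $\|\cdot\|_*$ bounds the bracketed difference in absolute value by $\|\rho_{(j)} g\|_* = \rho_{(j)}\|g\|_*$. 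Multiplying by $\delta$ gives $|p(\lambda^{(j)};\rho_{(j)}) - p(\lambda^{(j)};0)|\le \delta\|g\|_*\,\rho_{(j)}$, which is $\kappa_3\rho_{(j)}$ (interpreting the shorthand $\|g\|$ in the statement as the dual-norm quantity that naturally appears in $p$).

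The main, and very mild, obstacle is simply keeping the roles of $\|\cdot\|$ and $\|\cdot\|_*$ straight: the primal bound needs the primal-norm bound $\|d^{(j)}\|\le\delta$ converted into a $\|\cdot\|_2$ bound via \eqref{norm equivalent}, while the dual bound is natively in the dual norm and requires no conversion. No iterate-dependent quantity other than $d^{(j)}\in X$ and $\lambda^{(j)}$ being dual feasible is used, so Assumption \ref{ass.subproblem} is sufficient and there is nothing else to verify.
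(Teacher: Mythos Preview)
Your proposal is correct and follows essentially the same route as the paper: for \eqref{J.krho} you use the identity $l(d;\rho)-l(d;0)=\rho\langle g,d\rangle$, Cauchy--Schwarz, and the trust-region bound via \eqref{norm equivalent}, and for \eqref{D.krho} you cancel $\langle b,\lambda\rangle$ and apply the reverse triangle inequality to the dual norm, exactly as the paper does. Your parenthetical remark that $\|g\|$ in $\kappa_3$ should be read as $\|g\|_*$ is a fair observation about a notational slip in the statement.
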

	
	\begin{proof}
		For the primal values,  it holds true that   
		\begin{equation*}
		|l(d^{(j)}; \rho_{(j)}) - l(d^{(j)}; 0)|  = |\rho_{(j)} \langle g, d^{(j)}\rangle  |  \le \rho_{(j)} \|g\|_2  \| d^{(j)} \|_2  \le \rho_{(j)} \kappa_0\|g\|_2  \|d^{(j)} \|,
		\end{equation*}
		where the second inequality is from the requirement \eqref{norm equivalent}.
		It then follows that   
		\[     |l(d^{(j)}; \rho_{(j)}) - l(d^{(j)}; 0)|    \le \rho_{(j)} \kappa_0\delta\|g\|_2 = \kappa_2 \rho_{(j)},
		\]
		proving \eqref{J.krho}. 
		
		The difference between the dual values   is   given by 
		\begin{equation*} 
		\begin{aligned}
		&| p (\lambda^{(j)}; \rho_{(j)}) - p (\lambda^{(j)}; 0) | \\
		=\ & | - \delta \| \rho_{(j)} g+\sum_{i\in\Ecal\cup\Ical}\lambda_i a_i  \| +   \delta \| \sum_{i\in\Ecal\cup\Ical}\lambda_i a_i  \| |,\\
		\le \ &     \delta  \| \sum_{i\in\Ecal\cup\Ical}\lambda_i a_i  - (\rho_{(j)} g+\sum_{i\in\Ecal\cup\Ical}\lambda_i a_i ) \|,\\
		= \ &      \rho_{(j)} \delta\|g\|,
		\end{aligned}
		\end{equation*}
		completing the proof of \eqref{D.krho}.
	\end{proof}

	Now we are ready to prove our main result in this section, which needs the following definition    
	\[
	\mathcal{R} = \{j : (d^{(j)},\lambda^{(j)})\ \text{satisfies}\ \eqref{red.penalty}\text{ and }\eqref{red.comp}\ \text{but not } \eqref{red.fea}\},
	\]
	meaning that $\mathcal{R}$ is the set of subproblem iterations in which \eqref{update.rho} is triggered.   

	\begin{theorem}\label{thm.subproblem}
		Suppose Assumptions~\ref{ass.subproblem} holds and let 
		\[
		\kappa_4 := \inf_{j\in\mathcal{R}}\{ l^{(0)} -l(d^{(j)}; \rho_{(j)})\} \ge 0\ \  \text{and}\ \ \kappa_5 := \inf_{j\in\mathcal{R}} \{l^{(0)}-p(\lambda^{(j)}; 0)\} \ge 0.
		\]
		Then we have the following two cases. 
		\begin{enumerate}
			\item[(i)]  If $g=0$, then \eqref{dust} is never triggered during the {\color{blue}subproblem solver}.  
			\item[(ii)] If $g\ne0$, for $\rho_{(j)}\in(0,\hat\rho]$, where 
			\begin{equation}\label{hat.rho}
			\hat \rho : = \frac{ \gamma + \min\{\kappa_4, \kappa_5\}}{\max\{\kappa_2,\kappa_3\}}\left( 1-\sqrt{\beta_v/\beta_\phi}\right),
			\end{equation}
			if $(d^{(j)},\lambda^{(j)})$ satisfies \eqref{red.penalty} and \eqref{red.comp}, then $(d^{(j)},\nu^{(j)})$ satisfies \eqref{red.fea}.  In other words, for any $\rho_{(j)}\in(0,\hat\rho]$, the update \eqref{update.rho} is never triggered by \eqref{dust}. 
		\end{enumerate}
	\end{theorem}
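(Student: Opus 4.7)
The plan is to dispatch part~(i) by a direct comparison of ratios, and to establish part~(ii) by treating Lemma~\ref{lem primal dual value} as a perturbation bound that lets the feasibility ratio $r_v^{(j)}$ inherit the size of the penalty ratio $r_\phi^{(j)}$ up to an $O(\rho_{(j)})$ error.

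For part~(i), when $g=0$ the primal model $l(\cdot;\rho_{(j)})$ equals $l(\cdot;0)$ and the dual $p(\cdot;\rho_{(j)})$ equals $p(\cdot;0)$, so the numerators of $r_\phi^{(j)}$ and $r_v^{(j)}$ coincide. By \eqref{eq.dual_no_worse}, $p(\nu^{(j)};0)\ge p(\lambda^{(j)};0)=p(\lambda^{(j)};\rho_{(j)})$, and taking the positive part only makes the denominator of $r_v^{(j)}$ no larger than that of $r_\phi^{(j)}$. Hence $r_v^{(j)}\ge r_\phi^{(j)}\ge\beta_\phi>\beta_v$ whenever \eqref{red.penalty} holds, so \eqref{red.fea} automatically holds and \eqref{update.rho} is never invoked.

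For part~(ii), I would introduce the shorthand $N_\phi=l^{(0)}_\gamma-l(d^{(j)};\rho_{(j)})$, $D_\phi=l^{(0)}_\gamma-p(\lambda^{(j)};\rho_{(j)})$, and likewise $N_v,D_v$ for the feasibility ratio, with the goal of proving $N_v\ge\beta_v D_v$. Lemma~\ref{lem primal dual value}(a) gives $N_v\ge N_\phi-\kappa_2\rho_{(j)}$, while combining \eqref{eq.dual_no_worse} with Lemma~\ref{lem primal dual value}(b) and the trivial bound $(p(\nu^{(j)};0))_+\ge p(\nu^{(j)};0)$ gives $D_v\le D_\phi+\kappa_3\rho_{(j)}$. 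For any $j\in\mathcal{R}$ the definitions of $\kappa_4,\kappa_5$ yield $N_\phi\ge\gamma+\kappa_4$ and $D_\phi\ge\gamma+\kappa_5$, so both denominators dominate $\gamma+\min\{\kappa_4,\kappa_5\}$. Feeding these into \eqref{red.penalty} (i.e.\ $N_\phi\ge\beta_\phi D_\phi$) and subtracting $\beta_v D_v$ from $N_v$ reduces the target inequality to a single scalar condition of the form $C\,(\gamma+\min\{\kappa_4,\kappa_5\})\ge\rho_{(j)}\max\{\kappa_2,\kappa_3\}$ for a constant $C$ depending only on $\beta_v,\beta_\phi$.

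The main obstacle, and the only delicate algebraic step, is recovering precisely the factor $1-\sqrt{\beta_v/\beta_\phi}$ in $\hat\rho$ rather than the naive $1-\beta_v/\beta_\phi$ one gets from a one-sided estimate. I expect this symmetric square-root factor to emerge by splitting the slack $N_\phi-\beta_v D_\phi$ into two equal parts via the identity $1-\beta_v/\beta_\phi=(1-\sqrt{\beta_v/\beta_\phi})(1+\sqrt{\beta_v/\beta_\phi})$, so that one factor absorbs the primal perturbation $\kappa_2\rho_{(j)}$ and the other absorbs the dual perturbation $\kappa_3\rho_{(j)}$; this is also what forces the denominator of $\hat\rho$ to be $\max\{\kappa_2,\kappa_3\}$ and the numerator to feature $\min\{\kappa_4,\kappa_5\}$. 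The complementarity bound \eqref{red.comp}, although assumed in the statement, is not needed for the ratio comparison itself; it is used elsewhere only to guarantee that a $j$ satisfying \eqref{red.penalty} and \eqref{red.comp} exists, so that the conditional claim is non-vacuous. Everything outside the scalar algebra is routine bookkeeping on top of Lemma~\ref{lem primal dual value} and \eqref{eq.dual_no_worse}.
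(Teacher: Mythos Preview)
Your plan for part~(i) is exactly the paper's argument, and your observation that \eqref{red.comp} plays no role in the ratio comparison is correct---the paper's proof of part~(ii) never uses it either.

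For part~(ii) you have the right ingredients and the right target, but the mechanism you sketch for extracting the factor $1-\sqrt{\beta_v/\beta_\phi}$ will not land on the stated $\hat\rho$. Two points. First, a slip: $\kappa_5$ bounds $l^{(0)}-p(\lambda^{(j)};0)$, so it is $\bar D_v:=l^{(0)}_\gamma-p(\lambda^{(j)};0)$ that satisfies $\bar D_v\ge\gamma+\kappa_5$, not $D_\phi$; what you do get for $D_\phi$ (via weak duality) is $D_\phi\ge N_\phi\ge\gamma+\kappa_4$. Second, and more importantly, the ``additive splitting of the slack $N_\phi-\beta_v D_\phi$'' you describe yields a valid threshold of the form $(\beta_\phi-\beta_v)(\gamma+\kappa_4)/(\kappa_2+\beta_v\kappa_3)$, which is \emph{not} the $\hat\rho$ in the statement. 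The square root does not drop out of an additive subtraction of error terms.

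The device that actually produces $1-\sqrt{\beta_v/\beta_\phi}$ is multiplicative. Write
\[
\frac{\bar r_v^{(j)}}{r_\phi^{(j)}}
=\frac{N_v}{\bar D_v}\cdot\frac{D_\phi}{N_\phi}
=\underbrace{\frac{N_v}{N_\phi}}_{\text{primal factor}}\cdot\underbrace{\frac{D_\phi}{\bar D_v}}_{\text{dual factor}},
\qquad \bar r_v^{(j)}:=\frac{N_v}{\bar D_v},
\]
and bound each factor separately by $\sqrt{\beta_v/\beta_\phi}$: from Lemma~\ref{lem primal dual value}(a), $N_v/N_\phi\ge 1-\kappa_2\rho_{(j)}/N_\phi\ge 1-\kappa_2\rho_{(j)}/(\gamma+\kappa_4)$, which is $\ge\sqrt{\beta_v/\beta_\phi}$ exactly when $\rho_{(j)}\le(\gamma+\kappa_4)(1-\sqrt{\beta_v/\beta_\phi})/\kappa_2$; the dual factor is handled symmetrically with $\kappa_3,\kappa_5$ and $\bar D_v$. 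Multiplying the two bounds and using $r_\phi^{(j)}\ge\beta_\phi$ gives $\bar r_v^{(j)}\ge\beta_v$, and then $r_v^{(j)}\ge\bar r_v^{(j)}$ since $D_v\le\bar D_v$. Taking the worse of the two $\rho$-conditions is what forces $\max\{\kappa_2,\kappa_3\}$ in the denominator and $\min\{\kappa_4,\kappa_5\}$ in the numerator. So the ``split'' is not of the slack but of the target ratio $\beta_v/\beta_\phi=\sqrt{\beta_v/\beta_\phi}\cdot\sqrt{\beta_v/\beta_\phi}$, distributed between the primal and dual perturbations.
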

	
	\begin{proof}  
		We first prove $(i)$.  
		If $g=0$, we know that $l(d^{(j)}; \rho) = l(d^{(j)}; 0)$ and that {\color{blue} $p(\nu^{(j)}; 0) \ge p(\lambda^{(j)}; 0) = p(\lambda^{(j)}; \rho)$ by the selection 
			of $\nu^{(j)}$, implying $r_v^{(j)} \ge r_\phi^{(j)}$. }
		Therefore, if \eqref{red.penalty} and \eqref{red.comp} are satisfied, then  {\color{blue} $r_v^{(j)} \ge r_\phi^{(j)} \ge \beta_\phi > \beta_v$} satisfying \eqref{red.fea}, as desired.

		As for $(ii)$, the denominator of $\tilde\rho$ is positive since $g\neq 0$.  We prove $(ii)$ by contradiction and assume        that $\mathcal{R}$ is infinite, indicating that  the subproblem solver is never terminated and $\rho_{(j)}$ is reduced infinite many times and driven to 0.   We have from \eqref{J.krho} that 
		\[
		-\kappa_2\rho_{(j)} \le  l(d^{(j)}; \rho_{(j)})  -  l(d^{(j)}; 0)  \le \kappa_2\rho_{(j)}\ \ \ \text{for any}\  j \in \mathcal{R},
		\]
		which, after adding and dividing through by $l_ \gamma^{(0)} - l(d^{(j)}; \rho_{(j)})$, yields for $j \in \mathcal{R}$ that
		\begin{equation}\label{lele2}
		1 -\frac{\kappa_2\rho_{(j)}}{l^{(0)}_ \gamma-l(d^{(j)}; \rho_{(j)})}  \le \frac{l^{(0)}_ \gamma - l (d^{(j)}; 0)}{l^{(0)}_ \gamma-l(d^{(j)}; \rho_{(j)})} \le 1+\frac{\kappa_2\rho_{(j)}}{l^{(0)}_ \gamma-l(d^{(j)}; \rho_{(j)})}.
		\end{equation}
		Thus, for any
		\[
		\rho_{(j)} \le \frac{  \gamma + \kappa_4}{\kappa_2}\left(1-\sqrt{\frac{\beta_v}{\beta_\phi}}  \right) \leq \frac{ l^{(0)}_ \gamma - l(d^{(j)}; \rho_{(j)})}{\kappa_2}\left(1-\sqrt{\frac{\beta_v}{\beta_\phi}}\right),
		\]
		it follows from the first inequality of \eqref{lele2} that
		\begin{equation}\label{satisfy1}
		\frac{l ^{(0)}_ \gamma-l (d^{(j)}; 0)}{l ^{(0)}_ \gamma-l (d^{(j)} ; \rho_{(j)})} \geq \sqrt{\frac{\beta_v}{\beta_\phi}}.
		\end{equation}
		Following a similar argument from \eqref{D.krho}, it follows that for any
		\[\rho_{(j)} \le  \frac{ \gamma + \kappa_5}{\kappa_3}\left( 1-\sqrt{\frac{\beta_v}{\beta_\phi}}\right) \le  \frac{l^{(0)}_ \gamma - p(\lambda^{(j)}; 0)}{\kappa_3}\left( 1-\sqrt{\frac{\beta_v}{\beta_\phi}}\right),
		\]
		one finds that
		\begin{equation}\label{satisfy2} 
		\frac{l^{(0)}_ \gamma-p (\lambda^{(j)}; \rho_{(j)})}{l^{(0)}_ \gamma-p (\lambda^{(j)}; 0)} \geq \sqrt{\frac{\beta_v}{\beta_\phi}}. 
		\end{equation}
		Overall, we have shown that for any $\rho_{(j)} \le \tilde\rho$ with $\tilde\rho$ defined in \eqref{hat.rho}, it follows that 
		\eqref{satisfy1} and \eqref{satisfy2} both hold true. 
		
		Since our supposition that $\mathcal{R}$ is infinite implies that $\rho_{(j)} \to 0$, we may now proceed under the assumption that $j \in \mathcal{R}$ with $\rho_{(j)} \in (0,\tilde\rho]$.  Let us now define the ratios
		\[
		\bar r_v^{(j)} := \frac{l^{(0)}_ \gamma - l(d^{(j)}; 0) }{l^{(0)}_ \gamma -  p(\lambda^{(j)}; 0)   },  \]
		where it must be true that $r_v^{(j)} \ge \bar r_v^{(j)} $ by the definition of operator $(\cdot)_+$.  From \eqref{satisfy1} 
		\[
		\frac{\bar r^{(j)}_v}{r^{(j)}_\phi} 
		=   \frac{l^{(0)}_ \gamma-l(d^{(j)}; 0)}{l^{(0)}_ \gamma-l(d^{(j)}; \rho_{(j)})} \frac{l^{(0)}_ \gamma-p ( \lambda^{(j)}; \rho_{(j)})}{l^{(0)}_ \gamma-p ( \lambda^{(j)}; 0)}  
		\ge \frac{\beta_v}{\beta_\phi},
		\]
		yielding 
		\[ r^{(j)}_v \ge \bar r^{(j)}_v \ge \frac{\beta_v}{\beta_\phi} r^{(j)}_\phi \ge \beta_v. \]
		However, this contradicts the fact that $j\in\mathcal{R}$.  Overall, since we have reached a contradiction, we may conclude that $\mathcal{R}$ is finite. 
	\end{proof}
	
	We can use Theorem~\ref{thm.subproblem}  to estimate the number of reductions occured during a single {\color{blue}subproblem solver}, as well as a lower bound of the penalty parameter after solving the subproblem, which is summarized in the following theorem. Since it describes results about the 
	main algorithm, we add back the $k$ index to denote the $k$th iteration of main algorithm. 
	
	\begin{theorem} \label{thm.subproblem.compleixty}
		Suppose Assumptions~\ref{ass.subproblem} holds, then after solving the $k$th subproblem, we have 
		\begin{equation}\label{rho lower bound} \tilde\rho_k \ge \min\left(\rho_{k-1}, \tfrac{ \theta_\rho\gamma_k}{ \max(\kappa_0^2,1)\delta }\left( 1-\sqrt{\tfrac{\beta_v}{\beta_\phi}}\right) \tfrac{1}{\|\nabla f(x^k)\| } \right).\end{equation}
		Moreover, \eqref{dust} is triggered at most 
		\begin{equation}\label{how many updates}
		\left\lceil \tfrac{1}{\ln\theta_\rho} \ln\left(\tfrac{   \gamma_k}{ \max(\kappa_0^2,1)\delta }\left( 1-\sqrt{\tfrac{\beta_v}{\beta_\phi}}\right) \tfrac{1}{ \rho_{k-1} \|\nabla f(x^k)\| }\right) \right\rceil 
		\end{equation}
		times during solving the $k$th subproblem.
		
	\end{theorem}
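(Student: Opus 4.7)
The plan is to combine Theorem~\ref{thm.subproblem}(ii) with the geometric reduction rule \eqref{update.rho} to obtain both conclusions. The key observation is that once $\rho_{(j)}$ falls at or below the threshold $\hat\rho$ defined in \eqref{hat.rho}, the \eqref{dust} mechanism can no longer invoke \eqref{update.rho}, so the total number of triggers is controlled by how many $\theta_\rho$-contractions are needed to bring $\rho_{(0)}=\rho_{k-1}$ below $\hat\rho$.

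First I would simplify $\hat\rho$ into a quantity depending only on problem data. Since $\kappa_4,\kappa_5\ge 0$, one has $\hat\rho\ge\gamma_k(1-\sqrt{\beta_v/\beta_\phi})/\max(\kappa_2,\kappa_3)$. Using \eqref{norm equivalent} together with the explicit constants from Lemma~\ref{lem primal dual value} (namely $\kappa_2=\kappa_0\delta\|g\|_2$ and $\kappa_3=\delta\|g\|$), a short case analysis over the choice of $\|\cdot\|$ gives $\max(\kappa_2,\kappa_3)\le \max(\kappa_0^2,1)\,\delta\,\|\nabla f(x^k)\|$, so that
\[
\hat\rho \;\ge\; \hat\rho_0 \;:=\; \frac{\gamma_k\bigl(1-\sqrt{\beta_v/\beta_\phi}\bigr)}{\max(\kappa_0^2,1)\,\delta\,\|\nabla f(x^k)\|}.
\]

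Next, to establish \eqref{rho lower bound}, I would split on whether \eqref{update.rho} is ever triggered during the $k$th subproblem. If it is not, then $\tilde\rho_k=\rho_{(0)}=\rho_{k-1}$. Otherwise, let $j^\star$ be the last iteration at which \eqref{update.rho} fires; Theorem~\ref{thm.subproblem}(ii) forces $\rho_{(j^\star)}>\hat\rho\ge\hat\rho_0$, since any $\rho_{(j^\star)}\le\hat\rho$ would make \eqref{red.fea} automatic and block the trigger. Then \eqref{update.rho} yields $\rho_{(j^\star+1)}=\theta_\rho\rho_{(j^\star)}>\theta_\rho\hat\rho_0$, and no further reduction occurs, so $\tilde\rho_k\ge\theta_\rho\hat\rho_0$. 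Taking the minimum over the two cases gives \eqref{rho lower bound}.

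Finally, for the bound \eqref{how many updates} on the trigger count, I would observe that after $N$ invocations of \eqref{update.rho} the parameter equals $\theta_\rho^N\rho_{k-1}$. If an $N$th trigger occurs, then immediately before it the value is $\theta_\rho^{N-1}\rho_{k-1}$, and the same reasoning as above forces this to exceed $\hat\rho_0$. Taking logarithms and using $\ln\theta_\rho<0$ gives $N-1<\ln(\hat\rho_0/\rho_{k-1})/\ln\theta_\rho$, whence $N\le\bigl\lceil\ln(\hat\rho_0/\rho_{k-1})/\ln\theta_\rho\bigr\rceil$, which after substituting $\hat\rho_0$ is exactly \eqref{how many updates}. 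The only delicate point is the consolidation of $\kappa_2$ and $\kappa_3$ into the single factor $\max(\kappa_0^2,1)\,\delta\,\|\nabla f(x^k)\|$; everything else is routine geometric-sequence bookkeeping built on Theorem~\ref{thm.subproblem}.
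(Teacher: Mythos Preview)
Your proposal is correct and follows essentially the same approach as the paper: bound $\hat\rho$ below by dropping $\kappa_4,\kappa_5\ge 0$ and consolidating $\max(\kappa_2,\kappa_3)$ via $\|\nabla f(x^k)\|_2\le\kappa_0\|\nabla f(x^k)\|$, then split on whether \eqref{update.rho} ever fires to get \eqref{rho lower bound}, and finally use the geometric-sequence count to obtain \eqref{how many updates}. Your treatment of the last-trigger index $j^\star$ is in fact slightly more careful than the paper's version of the same counting argument.
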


	\begin{proof}
		From Theorem~\ref{thm.subproblem}, we know that if $\hat\rho_k\ge \rho_{k-1}$, then \eqref{dust} is never triggered. 
		If this is not the case, since $\rho$ is reduced by a fraction whenever it is updated, from Theorem~\ref{thm.subproblem}, we know the final $\rho$ returned by the subproblem solver  must satisfy 
		\[ \tilde\rho_k \ge \theta_\rho\hat\rho_k \ge \frac{ \theta_\rho\gamma_k}{\max\{\kappa_0^2\delta\|\nabla f(x^k)\|,\delta\|\nabla f(x^k)\|\}}\left( 1-\sqrt{\beta_v/\beta_\phi}\right).\]
		by noticing 
		\[ \|\nabla f(x^k)\|_2 \le \kappa_0\|\nabla f(x^k)\|.
		\]
		This completes the proof of  \eqref{rho lower bound}.
		
		For \eqref{how many updates},  suppose  $\hat\rho_k  <  \rho_{k-1} $ so that \eqref{dust} is triggered during the {\color{blue}subproblem solver}. Also suppose after $\hat j$ reductions, we have $ \theta_\rho^{\hat j}  \rho_{k-1} \le  \hat\rho_k $. Taking logarithm of both sides, after simple rearrangement, we have 
		\[ \hat j \le \frac{\ln(\hat\rho_k/\rho_{k-1})}{\ln\theta_\rho}. \]
		Notice that both the denominator and the numerator are negative.  This inequality, combined with 
		\eqref{rho lower bound}, proves \eqref{how many updates}.
	\end{proof}
	
	From  \eqref{rho lower bound} and \eqref{how many updates} in  Theorem~\ref{thm.subproblem.compleixty}, 
	it would be worth noticing that many factors may affect the 
	the number of times that \eqref{dust} is triggered within a single subproblem. 
	\begin{itemize}
		\item Smaller  $\|\nabla f(x^k)\|$  will result in fewer  \eqref{dust} updates.  Intuitively,  in  this case, $l(d; \rho, x^k)$  is close to $l(d; 0, x^k)$, 
		so that any  direction successful for optimality may be also successful for feasibility.  As for larger $\|\nabla f(x^k)\|$, we may need more updates. 
		\item The accuracy tolerance $\gamma_k$ also affects the number of updates needed.  If we have aggressively small $\gamma_k$, meaning the subproblem needs to be solved more accurately, then \eqref{dust} updates may happen more frequently. 
		\item The trust region radius also plays a role in the number of \eqref{dust} updates, and smaller $\delta$ may lead to fewer updates.  This is reasonable since the difference between  $l(d; \rho, x^k)$ and  $l(d; 0, x^k)$ should be smaller in this case within trust region $X$. 
		\item We can also see the influence of the algorithmic parameters here.  A smaller $\theta_\rho$ naturally leads to fewer updates but possibly smaller $\rho$ since it is reduced more aggressively each time.  It would be interesting to see that if one chooses $\beta_v \to \beta_\phi$ (note $\beta_v < \beta_\phi$), 
		then \eqref{dust} may occur a lot more times. The intuition of this case is that we require a  direction successful for optimality should also be the same successful for feasibility, which could only happen for very small $\rho$.
	\end{itemize}

	
	\subsection{Global Convergence}

	In this subsection,  we show that if \eqref{dust} and \eqref{psst} are used to solve \eqref{prob.nlp} in a {\color{blue}penalty-SLP algorithm, then the algorithm can converge from any starting point using reasonable assumptions.} Specifically, if \eqref{dust} and \eqref{psst} are only triggered a finite number of times, then every limit point of the iterates is either infeasible stationary or first-order stationary for \eqref{prob.nlp}.  Otherwise, if \eqref{dust} and \eqref{psst} are triggered an infinite number of times, driving the penalty parameter to zero, then every limit point of the iterates is either an infeasible stationary point or a feasible point at which a constraint qualification fails to hold.
	
	For the analysis in this section, we extend our use of the sub/superscript $k$ to represent the value of quantities associated with iteration $k \in \mathbb{N}$.  For instance, $\mathcal{R}^k$ denotes the set $\mathcal{R}$ defined in \S\ref{sec.bound.rhoj} at the $k$th iteration.

	In the whole process of analysis, we assume the following.
	
	\bassumption\label{global} Functions    
	$f$ and $c_i$ for all $i \in \Ecal\cup\Ical$, and their first- and second-order  derivatives, are all  bounded in an open convex set containing $\{x^k\}$ and $\{x^k+d^k\}$.  Also assume that $\gamma_k\to0$. 
	\eassumption
	
	
	Define the index set 
	\[
	\mathcal{U} := \{k \in \mathbb{N} : \mathcal{R}^k\ne\emptyset \}.
	\]
	Moreover, for every $k\in\mathcal{U}$, let $j_k$ be the subproblem iteration number corresponding to the value of the smallest ratio $r_v$, i.e.,
	\[   r_v^{(j_k)} \le r_v^{(i_k)} \quad \text{for any }\  i_k\in\mathcal{R}^k.\]
	Also, define the index set 
	\[ 
	\mathcal{T} := \{ k \in\mathbb{N}: \rho_k \  \text{is reduced by}\ \eqref{psst} \}.
	\]
	From these definitions, it follows that $\rho_k < \rho_{k-1}$ if and only if $k\in\mathcal{U}\cup\mathcal{T}$.

	We also have the following fact about the subproblem solutions, the proof of which is skipped here since it can be easily derived by noticing $\|d^k\|_2 \le \kappa_0\|d^k\| \le \kappa_0\delta$ in the proof of \cite[Lemma 10]{burke2018dynamic}.
	\begin{lemma}\label{lem.trustregion}
		Under Assumption~\ref{ass.subproblem} and \ref{global}, it follows that, for all $k\in\mathbb{N}$, 
		the stepsize satisfies 
		\[ {\color{blue}\alpha_k} \ge \frac{\theta_\alpha(1-\beta_\alpha)}{\delta\kappa_0\kappa_1}  \Delta l(d^k; \rho_k, x^k).\]
	\end{lemma}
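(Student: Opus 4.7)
The plan is to apply the standard backtracking line-search analysis to the Armijo-type acceptance condition \eqref{LS condition}. Two ingredients are combined: (a) a ``second-order'' upper bound on the change of the penalty function along $d^k$ in terms of the predicted reduction $\Delta l(d^k; \rho_k, x^k)$, and (b) the fact that, whenever $\alpha_k < 1$, the immediately larger trial step $\alpha_k/\theta_\alpha$ must have violated \eqref{LS condition}. Rearranging the resulting inequality and substituting the trust-region bound $\|d^k\|_2 \le \kappa_0\|d^k\| \le \kappa_0 \delta$ coming from \eqref{norm equivalent} yields the claimed lower bound on $\alpha_k$.

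\textbf{Step 1 (descent-model estimate).} I first establish that, for a constant $\kappa_1>0$ determined by the uniform bounds on the Hessians of $f$ and the $c_i$ supplied by Assumption~\ref{global}, one has for every $\alpha\in[0,1]$
\[
\phi(x^k+\alpha d^k; \rho_k) - \phi(x^k; \rho_k) \le -\alpha\,\Delta l(d^k; \rho_k, x^k) + \tfrac{1}{2}\kappa_1 \alpha^2 \|d^k\|_2^2.
\]
The $\rho_k f$-piece is the standard quadratic Taylor bound. For the $v$-piece I split, for each $i$, the change $v_i(c_i(x^k+\alpha d^k))-v_i(c_i(x^k))$ as $[v_i(c_i(x^k+\alpha d^k))-v_i(c_i(x^k)+\alpha\langle\nabla c_i(x^k),d^k\rangle)]+[v_i(c_i(x^k)+\alpha\langle\nabla c_i(x^k),d^k\rangle)-v_i(c_i(x^k))]$. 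The first bracket is $O(\alpha^2\|d^k\|_2^2)$ by the $1$-Lipschitz continuity of $v_i$ combined with a second-order expansion of $c_i$; the second bracket is bounded above by $\alpha[v_i(c_i(x^k)+\langle\nabla c_i(x^k),d^k\rangle)-v_i(c_i(x^k))]$ using the convexity of $v_i$. Summing over $i\in\Ecal\cup\Ical$ converts these second brackets into exactly $-\alpha\,\Delta l(d^k; 0, x^k)$, which merges with the $f$-piece to give $-\alpha\,\Delta l(d^k; \rho_k, x^k)$.

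\textbf{Step 2 (Armijo violation and rearrangement).} If $\alpha_k=1$ the claim is immediate from $\Delta l(d^k;\rho_k,x^k)\le\tfrac12\kappa_1\|d^k\|_2^2$ combined with $\|d^k\|_2\le\kappa_0\delta$. Otherwise, by the choice of $\alpha_k$ as the largest successful backtracking step, the trial $\alpha_k/\theta_\alpha$ fails \eqref{LS condition}:
\[
\phi(x^k;\rho_k)-\phi(x^k+(\alpha_k/\theta_\alpha)d^k;\rho_k)<\beta_\alpha(\alpha_k/\theta_\alpha)\,\Delta l(d^k;\rho_k,x^k).
\]
Combining this with the Step~1 estimate at $\alpha=\alpha_k/\theta_\alpha$ gives $(1-\beta_\alpha)\Delta l(d^k;\rho_k,x^k)<\tfrac12\kappa_1(\alpha_k/\theta_\alpha)\|d^k\|_2^2$. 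Using $\|d^k\|_2^2\le \kappa_0\delta\|d^k\|_2$ together with $\|d^k\|_2\le\kappa_0\delta$ and absorbing the remaining numerical constants into a redefinition of $\kappa_1$ yields the announced bound $\alpha_k\ge\tfrac{\theta_\alpha(1-\beta_\alpha)}{\delta\kappa_0\kappa_1}\Delta l(d^k;\rho_k,x^k)$.

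\textbf{Main obstacle.} The only genuine subtlety is the non-smoothness of $v$, which rules out a direct quadratic Taylor expansion of $\phi(\cdot;\rho_k)$. This is circumvented by the two-term decomposition in Step~1, which requires nothing beyond the $1$-Lipschitz continuity and convexity of each $v_i$ (both evident from \eqref{fea comp}) and the $C^2$-smoothness of the $c_i$ provided by Assumption~\ref{global}. Once the descent estimate is available, the remainder is the textbook Armijo backtracking argument and mirrors the proof of \cite[Lemma~10]{burke2018dynamic} referenced in the statement.
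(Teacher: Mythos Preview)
Your approach is correct and coincides with what the paper defers to---namely the proof of \cite[Lemma 10]{burke2018dynamic}, adapted via the observation $\|d^k\|_2\le\kappa_0\|d^k\|\le\kappa_0\delta$ that the paper singles out. One small imprecision: in the case $\alpha_k=1$, the inequality $\Delta l(d^k;\rho_k,x^k)\le\tfrac12\kappa_1\|d^k\|_2^2$ does \emph{not} follow from acceptance of the unit step (that inequality is what you would extract from \emph{rejection}). The claimed bound still holds there, but for a different reason: $\Delta l(d^k;\rho_k,x^k)$ is uniformly bounded under Assumption~\ref{global} (e.g.\ a subgradient argument gives $\Delta l\le \|g\|_*\,\delta$ with $g\in\partial l(0;\rho_k,x^k)$), so $1\ge\tfrac{\theta_\alpha(1-\beta_\alpha)}{\delta\kappa_0\kappa_1}\Delta l$ once $\kappa_1$ is taken large enough to absorb this constant---consistent with your own ``redefinition of $\kappa_1$'' at the end of Step~2.
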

	
	%
	%

	We now prove that, in the limit, the reductions in the models of the constraints violation measure and the penalty function vanish. For this purpose, it will be convenient to work with the shifted penalty function
	\[ \varphi(x,\rho) := \rho ( f(x) - \underline f)  + v(x) \ge 0, \]
	where $\underline f$(its existance follows from 
	Assumption~\ref{global}) is the infimum of $f$ over the smallest convex set containing $\{x^k\}$. In the following lemma, it proves that the function $\varphi$ possesses a useful monotonicaity property.

	\begin{lemma}\label{lem.varphi}
		Under Assumption~\ref{ass.subproblem} and \ref{global}, it holds that, for all $k \in \mathbb{N}$,
		\begin{align}
		\varphi(x^{k+1}, \rho_{k+1})   \le \varphi(x^k,\rho_k)  - \frac{\theta_\alpha(1-\beta_\alpha)\beta_\alpha}{\delta\kappa_0\kappa_1} [ \Delta l(d^k; \rho_k, x^k)]^2,  \label{var.red.rho} 
		\end{align}
		
	\end{lemma}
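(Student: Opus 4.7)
The plan is to pass from the line-search decrease of $\phi(\cdot;\rho_k)$ to a decrease of the shifted penalty function $\varphi(\cdot,\rho)$, and then exploit monotonicity of $\rho_k$ together with $f(x)-\underline f\ge 0$ to allow the penalty parameter to change from $\rho_k$ to $\rho_{k+1}$ on the left-hand side.

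First I would use the line-search acceptance rule \eqref{LS condition} together with Lemma~\ref{lem.trustregion}. Substituting the lower bound on $\alpha_k$ into the right-hand side of \eqref{LS condition} yields
\[
\phi(x^k;\rho_k)-\phi(x^{k+1};\rho_k) \;\ge\; \frac{\theta_\alpha(1-\beta_\alpha)\beta_\alpha}{\delta\kappa_0\kappa_1}\bigl[\Delta l(d^k;\rho_k,x^k)\bigr]^2,
\]
since $x^{k+1}=x^k+\alpha_k d^k$. This is the main quadratic decrease; the $\Delta l$ term appears squared precisely because it enters both through $\beta_\alpha\alpha_k$ (from the Armijo-type condition) and through the lower bound on $\alpha_k$ itself.

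Next I would convert this inequality for $\phi$ into one for $\varphi$. By definition $\varphi(x,\rho)=\phi(x;\rho)-\rho\underline f$, so for a fixed penalty parameter $\rho_k$ the two functions differ by a constant and
\[
\varphi(x^k,\rho_k)-\varphi(x^{k+1},\rho_k) \;=\; \phi(x^k;\rho_k)-\phi(x^{k+1};\rho_k) \;\ge\; \frac{\theta_\alpha(1-\beta_\alpha)\beta_\alpha}{\delta\kappa_0\kappa_1}\bigl[\Delta l(d^k;\rho_k,x^k)\bigr]^2.
\]

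Finally I would replace $\rho_k$ by $\rho_{k+1}$ on the left. Since the \eqref{dust} and \eqref{psst} updates only ever shrink $\rho$, we have $\rho_{k+1}\le \rho_k$, and by the choice of $\underline f$ (infimum of $f$ over a convex set containing the iterates from Assumption~\ref{global}) we have $f(x^{k+1})-\underline f\ge 0$. Therefore
\[
\varphi(x^{k+1},\rho_{k+1}) \;=\; \rho_{k+1}\bigl(f(x^{k+1})-\underline f\bigr)+v(x^{k+1}) \;\le\; \rho_k\bigl(f(x^{k+1})-\underline f\bigr)+v(x^{k+1}) \;=\; \varphi(x^{k+1},\rho_k),
\]
which, chained with the previous inequality, gives exactly \eqref{var.red.rho}.

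The only subtle point (and the reason the shifted function $\varphi$ is introduced in the first place) is the last step: for $\phi$ itself, the inequality $\phi(x^{k+1};\rho_{k+1})\le \phi(x^{k+1};\rho_k)$ would require $f(x^{k+1})\le 0$, which of course need not hold; subtracting $\rho\underline f$ makes the dependence of $\varphi$ on $\rho$ monotonically nondecreasing in the right direction, so shrinking $\rho$ can only decrease $\varphi$. Everything else is a direct chaining of the line-search decrease with the step-size lower bound from Lemma~\ref{lem.trustregion}.
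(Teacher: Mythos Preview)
Your proof is correct and follows essentially the same approach as the paper: combine the line-search condition \eqref{LS condition} with the step-size lower bound from Lemma~\ref{lem.trustregion}, and use $\rho_{k+1}\le\rho_k$ together with $f(x^{k+1})\ge\underline f$ to pass from $\rho_k$ to $\rho_{k+1}$. The paper's version is slightly more compressed---it writes the single inequality $\varphi(x^{k+1},\rho_{k+1})\le\varphi(x^k,\rho_k)-(\rho_k-\rho_{k+1})(f(x^{k+1})-\underline f)-\beta_\alpha\alpha_k\Delta l(d^k;\rho_k,x^k)$ and then drops the nonnegative middle term---but the content is identical.
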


	
	\begin{proof}
		From the line search condition \eqref{LS condition}  
		\[ \varphi(x^{k+1}, \rho_{k+1}) \le \varphi(x^k,\rho_k) - (\rho_k-\rho_{k+1})(f(x^{k+1}) - \underline f) -  \beta_\alpha  {\color{blue}\alpha_k}  \Delta l(d^k; \rho_k, x^k).\]
		Then \eqref{var.red.rho} follows from this inequality, Lemma~\ref{lem.trustregion},  the fact that $\{\rho_k\}$ is monotonically decreasing, and  $f(x^{k+1}) \ge \underline f$ for 
		all $k\in\mathbb{N}$. 
	\end{proof}

	We now show the  model reductions and duality gap all vanish asymptotically.
	
	\begin{lemma}\label{lem.delta_2_zero}
		Under Assumption~\ref{ass.subproblem} and \ref{global}, the following limits hold.
		\begin{enumerate}
			\item[(i)] $0 =  \lim\limits_{k\to \infty} \Delta l(d^k; \rho_k, x^k)=   \lim\limits_{k\to\infty} \Delta l(d^k; 0, x^k)$,
			\item[(ii)] $0 = \lim\limits_{k\to\infty} [l(0; \rho_k, x^k ) - p(\lambda^k; \rho_k, x^k )]  = \lim\limits_{k\to\infty} [l(0; 0, x^k ) - p(\nu^k; 0, x^k )]$,
			\item[(iii)] $0 =  \lim\limits_{k\to\infty} \Delta l(d^*(0,x^k); 0, x^k )=  \lim\limits_{k\to\infty} \Delta l(d^*(\rho_k, x^k); \rho_k, x^k )$,
			\item[(iv)] $0 =  \Delta l (d^*(0, x^* ); 0, x^*) =    \Delta l  (d^*(\rho_*, x^* ); \rho_*, x^*)$ with $\rho_* := \lim\limits_{k\to\infty} \rho_k$  for   any limit point 
			$x^*$ of $\{x^k\}$. 
		\end{enumerate}
	\end{lemma}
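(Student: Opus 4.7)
The plan is to prove the four statements in order, each building on its predecessor. For part $(i)$, I would telescope the descent inequality \eqref{var.red.rho} from Lemma~\ref{lem.varphi}. Since $\varphi(x,\rho)\ge 0$, summing from $k=0$ to $K$ and letting $K\to\infty$ yields
\[
\frac{\theta_\alpha(1-\beta_\alpha)\beta_\alpha}{\delta\kappa_0\kappa_1}\sum_{k=0}^{\infty}[\Delta l(d^k;\rho_k,x^k)]^2 \le \varphi(x^0,\rho_0)<\infty,
\]
which forces $\Delta l(d^k;\rho_k,x^k)\to 0$. To derive the companion limit $\Delta l(d^k;0,x^k)\to 0$, I would invoke \eqref{dust.after} rearranged as $\Delta l(d^k;0,x^k)\le \beta_l^{-1}(\Delta l(d^k;\rho_k,x^k)+\gamma_k)-\gamma_k$, and then use the just-proved limit together with the standing assumption $\gamma_k\to 0$.

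For part $(ii)$, the two reduction conditions \eqref{red cond opt1} and \eqref{red cond fea1} rearrange into
\[
0\le l(0;\rho_k,x^k)-p(\lambda^k;\rho_k,x^k) \le \tfrac{1}{\beta_\phi}(\Delta l(d^k;\rho_k,x^k)+\gamma_k)-\gamma_k,
\]
and an analogous bound for the feasibility gap with $\beta_v$ and $\nu^k$. Nonnegativity on the left comes from weak duality \eqref{weak.duality}, while the right-hand sides vanish by $(i)$ and $\gamma_k\to 0$, which squeezes both duality gaps to zero. Part $(iii)$ then follows from strong duality applied to the convex subproblems \eqref{primal sub}--\eqref{dual sub}: since $p(\lambda^k;\rho_k,x^k)\le p(\lambda^*_{\rho_k};\rho_k,x^k)=l(d^*(\rho_k,x^k);\rho_k,x^k)$, we obtain
\[
0\le \Delta l(d^*(\rho_k,x^k);\rho_k,x^k) \le l(0;\rho_k,x^k)-p(\lambda^k;\rho_k,x^k),
\]
and the conclusion follows from $(ii)$. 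The feasibility analogue uses $\nu^k$ and the feasibility subproblem in the same way.

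For part $(iv)$, I would argue from continuity of the optimal-value function rather than from continuity of the minimizer itself, which need not exist. Under Assumption~\ref{global}, the map $(d,\rho,x)\mapsto l(d;\rho,x)$ is jointly continuous, and the feasible set $X=\{d:\|d\|\le\delta\}$ is compact and independent of $(\rho,x)$. Berge's maximum theorem therefore guarantees that $(\rho,x)\mapsto \min_{d\in X}l(d;\rho,x)$ is continuous, and hence so is $(\rho,x)\mapsto \Delta l(d^*(\rho,x);\rho,x)=l(0;\rho,x)-\min_{d\in X}l(d;\rho,x)$. The penalty sequence $\{\rho_k\}$ is monotonically nonincreasing and bounded below by $0$, so $\rho_k\to\rho_*$ exists; passing to the limit along any subsequence $x^{k_j}\to x^*$ in the limit from $(iii)$ yields $\Delta l(d^*(\rho_*,x^*);\rho_*,x^*)=0$, and the feasibility statement follows identically.

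The main obstacle will be $(iv)$: I must be careful to phrase everything in terms of the optimal value rather than the argmin, since the subproblem may have multiple minimizers that jump discontinuously with $(\rho,x)$. A secondary subtlety is in $(i)$: chaining from $\Delta l(d^k;\rho_k,x^k)\to 0$ to $\Delta l(d^k;0,x^k)\to 0$ relies crucially on \eqref{dust.after} holding uniformly in $k$ with the prescribed constant $\beta_l$, which is exactly what \eqref{psst} guarantees in the main algorithm.
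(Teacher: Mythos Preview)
Your proposal is correct and follows essentially the same approach as the paper. The paper proves $(i)$ by contradiction rather than by telescoping (a cosmetic difference), derives $(ii)$ and $(iii)$ jointly from the inequality chains \eqref{alter.red.constr}--\eqref{alter.red.penalty} (which package together your \eqref{red cond opt1}--\eqref{red cond fea1} and strong-duality steps), and asserts that $(iv)$ ``follows directly from $(iii)$''; your explicit appeal to Berge's maximum theorem for the continuity of the optimal-value map is a welcome formalization of that last step rather than a different route.
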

	
	\begin{proof}
		Let us first prove $(i)$ by contradiction.  Suppose that $\Delta l(d^k; \rho_k, x^k)$ does not converge to 0. Then, there exists a constant 
		$\epsilon>0$ and an infinite $\mathcal{K} \subseteq \mathbb{N}$ such that $\Delta l(d^k; \rho_k, x^k) \geq \epsilon$ for all $k\in \mathcal{K}$. It then follows from Lemma~  \ref{lem.varphi} that $\varphi(x^k; \rho_k) \to -\infty$, which contradicts the fact that $\{\varphi(x^k,\rho_k)\}$ is bounded below by zero. Therefore, $\Delta l(d^k; \rho_k, x^k)\to 0$.  The second limit in $(i)$ follows from \eqref{dust.after} and $\gamma_k\to0$. 
		
		The limits in $(ii)$ and $(iii)$ follow directly from the limits in $(i)$ and the inequalities in \eqref{alter.red.constr} and \eqref{alter.red.penalty} along with $\gamma_k \to 0$; $(iv)$ follows directly from $(iii)$. 
	\end{proof}
	
	We now provide our first global convergence theorem.

	\begin{theorem}\label{thm.global1}
		Under Assumption~\ref{ass.subproblem} and \ref{global}, the following statements hold.  
		\begin{enumerate}
			\item[(i)] Any limit point of $\{x^k\}$ is first-order stationary for $v$, i.e., it is feasible or an infeasible stationary point for \eqref{prob.nlp}. 
			\item[(ii)] If $\rho_k \to \rho_*$ for some constant $\rho_*>0$ and $v(x^k)\to0$, then any limit point $x^*$ of $\{x^k\}$ with $v(x^*)=0$ is a KKT point for \eqref{prob.nlp}. 
			\item[(iii)] If $\rho_k \to 0$, then either all limit points of $\{x^k\}$ are feasible for \eqref{prob.nlp} or all are infeasible. 
		\end{enumerate}
	\end{theorem}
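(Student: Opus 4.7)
My plan is to leverage two tools already proved: the stationarity characterizations in Lemma~\ref{lem.subproblem} and the asymptotic vanishing of the model reductions in Lemma~\ref{lem.delta_2_zero}(iv). Together these dispose of parts (i) and (ii) almost immediately, reducing them to translations of existing lemmas. Part (iii), by contrast, needs a genuine squeeze argument based on the monotone decrease of the shifted penalty $\varphi$ from Lemma~\ref{lem.varphi}.

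For (i), I would take an arbitrary limit point $x^*$ of $\{x^k\}$ and apply Lemma~\ref{lem.delta_2_zero}(iv) to obtain $\Delta l(d^*(0,x^*);0,x^*)=0$; Lemma~\ref{lem.subproblem}(i) then translates this into $0\in\bar\partial v(x^*)$, i.e., $x^*$ is first-order stationary for $v$, which by definition means it is either feasible or an infeasible stationary point for \eqref{prob.nlp}. For (ii), under $\rho_k\to\rho_*>0$ and $v(x^k)\to 0$, let $x^*$ be a limit point with $v(x^*)=0$; Lemma~\ref{lem.delta_2_zero}(iv) gives $\Delta l(d^*(\rho_*,x^*);\rho_*,x^*)=0$, and since $\rho_*>0$ and $v(x^*)=0$, Lemma~\ref{lem.subproblem}(iii) directly delivers that $x^*$ is a KKT point of \eqref{prob.nlp}. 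Both of these are essentially one-line arguments once the prior lemmas are in place.

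Part (iii) is the substantive case. The key observation is that Lemma~\ref{lem.varphi} makes $\{\varphi(x^k,\rho_k)\}$ monotonically nonincreasing, and since $\varphi\ge 0$ it must converge to some limit $\varphi^*\ge 0$. Under Assumption~\ref{global}, $f$ is bounded on the relevant set so $f(x^k)-\underline f$ is bounded; combined with $\rho_k\to 0$, the term $\rho_k(f(x^k)-\underline f)$ vanishes. Writing $v(x^k)=\varphi(x^k,\rho_k)-\rho_k(f(x^k)-\underline f)$ then forces $v(x^k)\to\varphi^*$ along the entire sequence. For any limit point $x^*$, continuity of $v$ (inherited from continuity of the $c_i$ via Assumption~\ref{global}) yields $v(x^*)=\varphi^*$. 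Consequently every limit point carries the same value of $v$: either $\varphi^*=0$ and every limit point is feasible, or $\varphi^*>0$ and every limit point is infeasible.

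The step I expect to require the most care is the passage $v(x^k)\to\varphi^*$ in (iii). The arithmetic is elementary, but I need to be sure the boundedness hypothesis of Assumption~\ref{global} genuinely applies along the whole sequence $\{x^k\}$, so that $\{f(x^k)-\underline f\}$ really is bounded and $\rho_k(f(x^k)-\underline f)$ can be sent to zero. Once that is secured, the dichotomy in (iii) follows by pure continuity, and the remaining parts (i) and (ii) are immediate corollaries of previously established results.
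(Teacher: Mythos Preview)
Your treatment of parts (i) and (ii) is exactly the paper's: both invoke Lemma~\ref{lem.delta_2_zero}(iv) and then read off the conclusion from Lemma~\ref{lem.subproblem}(i) and (iii), respectively.

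For part (iii) your argument is correct but genuinely different from the paper's. The paper argues by contradiction: it assumes the existence of two subsequences, one converging to a feasible $x^*$ and one to an infeasible $x^\times$ with $v(x^\times)=\epsilon>0$, and then uses $\rho_k\to 0$ together with the monotonicity of $\varphi(x^k,\rho_k)$ to manufacture indices at which $\varphi<\epsilon/2$ followed by later indices at which $\varphi\ge\epsilon/2$, contradicting Lemma~\ref{lem.varphi}. You instead go directly: since $\varphi(x^k,\rho_k)$ is nonincreasing and bounded below it converges to some $\varphi^*\ge 0$; boundedness of $f$ from Assumption~\ref{global} and $\rho_k\to 0$ force $\rho_k(f(x^k)-\underline f)\to 0$; hence $v(x^k)=\varphi(x^k,\rho_k)-\rho_k(f(x^k)-\underline f)\to\varphi^*$ along the \emph{entire} sequence, and continuity of $v$ pins every limit point to the same value $\varphi^*$. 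Your route is shorter and in fact yields slightly more (convergence of the full sequence $\{v(x^k)\}$, not merely the all-feasible/all-infeasible dichotomy), while the paper's contradiction argument avoids explicitly invoking the upper bound on $f$ and works with one-sided estimates only. Both rest on the same monotonicity lemma, so the difference is one of packaging rather than of underlying mechanism.
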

	\begin{proof}
		Part $(i)$ and part $(ii)$ follow by combining Lemma~\ref{lem.delta_2_zero}$(iv)$ with Lemma~\ref{lem.subproblem}$(i)$ and Lemma~\ref{lem.delta_2_zero}$(iv)$ with {\color{blue}Lemma~\ref{lem.subproblem}$(iii)$} respectively.
		
		We prove $(iii)$ by contradiction.  Suppose there exist infinite $\mathcal{K}^* \subseteq \mathbb{N}$ and $\mathcal{K}^\times  \subseteq \mathbb{N}$ such that $\{x^k\}_{k\in\mathcal{K}^*}\to x^*$ with $v(x^*)=0$ and 
		$\{x^k\}_{k\in\mathcal{K}^\times} \to x^\times$ with $v(x^\times) = \epsilon > 0$.  Since $\rho_k \to 0$, there exists $k^* \ge 0$ such that for all $k\in\mathcal{K}^*$ and $k\ge k^*$ one has that 
		$\rho_k(f(x^k)-\underline f) < \epsilon/4$ and $v(x^k) < \epsilon/4$, meaning that $\varphi(x^k, \rho_k) < \epsilon/2$.  On the other hand, it follows that $\rho_k(f(x^k)-\underline f) \ge 0$ for all $k \in \mathbb{N}$ and there exists $k^\times \in \mathbb{N}$ such that $v(x^k)\ge \epsilon/2$ for all $k\ge k^\times$ with $k \in \mathcal{K}^\times$, meaning that $\varphi(x^k, \rho_k) \ge \epsilon/2$.  This contradicts Lemma~\ref{lem.varphi}, which shows that $\varphi(x^k, \rho_k)$ is monotonically decreasing.  Therefore, the set of limit points of $\{x^k\}$ must be all feasible or all infeasible. 
	\end{proof}

	\textcolor{blue}{
		In the following results, we look further at the case  that $\rho_k \to 0$.  
		We show that in this case any limit point of the algorithm is an infeasible stationary point 
		or a feasible point satisfying the \emph{approximate KKT} (AKKT)  \cite{akkt1,akkt2}, which is defined as follows. 
	}
	
	\textcolor{blue}{
		\begin{definition}  Assume that $v(x^*)=0$. We say that $x^*$ satisfies AKKT if there exist sequences
			$\{x^k\} \subset \mathbb{R}^n$ ($\{x^k\}$ is called an AKKT sequence), 
			$\{\mu^k\} \subset \mathbb{R}^{|\Ecal\cup\Ical|}$ such that $\lim_{k\to\infty} x^k = x^*$ and 
			\begin{align}
			\lim_{k\to\infty} \nabla f(x^k) + \sum_{i\in\Ecal \cup \Ical} \mu_i^k \nabla c_i(x^k) = 0, \label{eq.akkt1} \\
			\lim_{k\to\infty} \min\{ \mu_i^k, -c_i(x^k)\} = 0, \quad i\in\Ical. \label{eq.akkt2}
			\end{align}
		\end{definition}
	}
	%
	\textcolor{blue}{
		Defining  
		\[  \Acal(x)  = \{i\in \Ical : c_i(x) = 0\} \ \ \text{and}\ \ 
		\Ncal(x)  = \{i\in\Ical : c_i(x) < 0\},
		\] 
		our main result for the case $\rho_k \to 0$ is summarized below. 
	}
	
	\textcolor{blue}{
		\begin{theorem} 
			Suppose Assumption~\ref{ass.subproblem} and \ref{global} hold and $\rho_k \to 0$. Let $x^*$ be a limit point of $\{x^k\}$ with $v(x^*)= 0$. 
			Then $x^*$  satisfies AKKT. 
		\end{theorem}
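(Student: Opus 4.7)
The plan is to construct an AKKT certificate by using the algorithm's iterates $\{x^k\}$ themselves as the AKKT sequence and defining multipliers from the subproblem dual variables, exploiting the fact that $\rho_k\to 0$ together with $v(x^*)=0$ forces the penalty subproblem's dual gap to collapse.

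First, I would invoke Theorem~\ref{thm.global1}(iii) to conclude that, since $x^*$ is feasible, every limit point of $\{x^k\}$ is feasible; continuity of $v$ under Assumption~\ref{global} then gives $v(x^k)\to 0$ along the full sequence. Passing to the subsequence with $x^k\to x^*$, I would combine Lemma~\ref{lem.delta_2_zero}(ii) with the termination condition \eqref{red.penalty} and $\gamma_k\to 0$ to obtain $l(0;\rho_k,x^k)-p(\lambda^k;\rho_k,x^k)\to 0$. Expanding the dual and using $\langle c(x^k),\lambda^k\rangle\le v(x^k)$ (which holds for any dual-feasible $\lambda^k$), this collapse decomposes into two nonnegative pieces:
\[
\|\rho_k\nabla f(x^k)+A^T\lambda^k\|_*\to 0\quad\text{and}\quad v(x^k)-\langle c(x^k),\lambda^k\rangle\to 0.
\]
The second limit is a sum over $i$ of nonnegative terms, so each vanishes; for $i\in\Ncal(x^*)$, continuity forces $\lambda_i^k\to 0$ because $|c_i(x^k)|\to|c_i(x^*)|>0$.

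Next I would define AKKT multipliers by $\mu_i^k:=0$ for $i\in\Ncal(x^*)$ and $\mu_i^k:=\lambda_i^k/\rho_k$ otherwise. Complementarity \eqref{eq.akkt2} is then immediate: for $i\in\Ncal(x^*)$, $\min\{\mu_i^k,-c_i(x^k)\}=0$ eventually because $\mu_i^k=0$ and $-c_i(x^k)>0$; for $i\in\Acal(x^*)$, $-c_i(x^k)\to 0$ together with $\mu_i^k\ge 0$ (inherited from the bound $\lambda_i^k\in[0,1]$ for $i\in\Ical$) gives $\min\{\mu_i^k,-c_i(x^k)\}\to 0$.

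The main obstacle is the gradient condition \eqref{eq.akkt1}: the residual equals
\[
\frac{1}{\rho_k}\Bigl(\rho_k\nabla f(x^k)+A^T\lambda^k-\sum_{i\in\Ncal(x^*)}\lambda_i^k\nabla c_i(x^k)\Bigr),
\]
whose numerator tends to zero by the two limits above, but dividing by $\rho_k\to 0$ requires a sharper rate. Rearranging \eqref{red.penalty} yields the quantitative bound $\|\rho_k\nabla f(x^k)+A^T\lambda^k\|_*\le(\delta\beta_\phi)^{-1}(\Delta l(d^k;\rho_k,x^k)+(1-\beta_\phi)\gamma_k)$, reducing matters to showing $(\Delta l(d^k;\rho_k,x^k)+\gamma_k)/\rho_k\to 0$ along a suitable subsequence; I would extract such a subsequence by combining the summability $\sum_k[\Delta l(d^k;\rho_k,x^k)]^2<\infty$ from Lemma~\ref{lem.varphi} with the observation that $\rho_k\to 0$ forces infinitely many reductions at iterations where $\gamma_k$ has become small relative to $\rho_{k-1}$. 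An alternative I find cleaner is to replace the algorithm's $\lambda^k$ by the \emph{exact} subproblem dual $\lambda^{*,k}$ and argue via parametric argmin-continuity that, for $k$ large, a primal minimizer $d_k^*$ of $l(\cdot;\rho_k,x^k)$ over $X$ exists with $\|d_k^*\|<\delta$ (because $d=0$ minimizes the limit problem $l(\cdot;0,x^*)$); then the trust-region normal cone vanishes, the primal KKT equation $\rho_k\nabla f(x^k)+A^T\lambda^{*,k}=0$ holds \emph{exactly}, and $\mu_i^k=\lambda_i^{*,k}/\rho_k$ satisfies \eqref{eq.akkt1} term-by-term. Whichever route is taken, the one non-routine step is controlling either the approximation rate or the interior-selection property.
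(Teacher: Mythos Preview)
Your second alternative---replace the algorithm's $\lambda^k$ by the exact subproblem dual $\lambda^{*,k}$ and argue that for $k$ large a primal minimizer lies strictly inside the trust region so that $\rho_k\nabla f(x^k)+A^T\lambda^{*,k}=0$ holds exactly---is precisely the paper's approach. The paper invokes Lemma~\ref{lem.subproblem}(i) to get $d^*(0,x^*)=0$ and then asserts (via an implicit parametric-continuity argument, since $\rho_k\to 0$ and $x^k\to x^*$) that $\|d^*(\rho_k,x^k)\|<\delta$ for large $k$ along the subsequence, so the normal cone of $X$ contributes nothing and the subproblem KKT conditions give the exact gradient equation and the complementarity needed to zero out the multipliers on $\Ncal(x^*)$. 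Your sketch of this route is accurate; the interior-selection step is indeed the one non-routine ingredient, and the paper treats it rather informally.

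Your first approach, however, has a genuine gap that I do not believe can be closed with the tools in the paper. You correctly identify that the obstruction is proving $(\Delta l(d^k;\rho_k,x^k)+\gamma_k)/\rho_k\to 0$ along some subsequence converging to $x^*$. But nothing in the analysis ties the decay of $\Delta l(d^k;\rho_k,x^k)$ or $\gamma_k$ to that of $\rho_k$: the summability $\sum_k[\Delta l]^2<\infty$ only gives $\Delta l\to 0$, while $\rho_k$ can be driven to zero at a rate governed by $\gamma_k$ (via \eqref{rho lower bound}) and by \eqref{psst}, and there is no mechanism forcing $\Delta l$ or $\gamma_k$ to be $o(\rho_k)$. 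The heuristic you offer---``infinitely many reductions at iterations where $\gamma_k$ has become small relative to $\rho_{k-1}$''---does not yield the needed rate, because a reduction of $\rho$ says nothing about the size of $\Delta l$ at that same iteration. So the first route should be abandoned in favor of the second, which is what the paper does.
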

	}

	\begin{proof}
		\textcolor{blue}{
			Consider a subsequence $\{x^k\}_{\mathcal{K}} \to x^*$.  By  Lemma~\ref{lem.subproblem},
			the subproblem at $x^*$  have $d^*(0,x^*) = 0$.  Therefore,  for sufficiently large $k\in \mathcal{K}$, 
			we know the $k$th subproblem must have sufficiently small optimal solution such that 
			$\|d^*(\rho_k,x^k) \| < \delta$, meaning the trust region constraint is inactive. Notice that the subproblem may not have a unique optimal solution and it is possible 
			that it has optimal solution on the trust region boundary.
			Therefore, for every sufficiently large  $k \in\mathcal{K}$,   there exists $\bar \lambda^k\in \mathbb{R}^{|\Ecal\cup\Ical}|$ such that 
			\begin{align}  \rho_k \nabla f(x^k) + \sum_{i\in\Ecal\cup\Ical} \bar \lambda^k_i \nabla c_i(x^k) = &\ 0 \label{sub.akkt.1}\\   
			\min\{ \bar \lambda^k_i, - (c_i(x^k)+\langle \nabla c_i(x^k), d^k\rangle )\} = &\ 0, \quad i\in\Ical.\label{sub.akkt.2}
			\end{align}
			Letting $\mu^k : = \bar\lambda^k/\rho^k$, it follows from \eqref{sub.akkt.1} that 
			\[ \nabla f(x^k) +  \sum_{i\in\Ecal\cup\Ical} \mu^k_i \nabla c_i(x^k)  = \nabla f(x^k) +  \sum_{i\in\Ecal\cup\Ical} \frac{\bar\lambda_i^k}{\rho^k} \nabla c_i(x^k) = 0, \]
			meaning \eqref{eq.akkt1} is satisfied. 
		}

		\textcolor{blue}{
			Now we verify \eqref{eq.akkt2}.  
			For each $i\in\Ncal(x^*)$ and sufficiently  large $k\in \mathcal{K}$, we have 
			$c_i(x^k) < c_i(x^*)/2 < 0$ and $\langle \nabla c_i(x^k), d^k\rangle < -c_i(x^*)/4$, implying 
			$c_i(x^k)+\langle \nabla c_i(x^k), d^k\rangle  < 0.$  It follows from \eqref{sub.akkt.2} 
			that $\bar\lambda_i^k = 0$. Hence for sufficiently large $k\in \mathcal{K}$ and $i\in \Ncal(x^*)$, we have  $\mu_i^k =0$ which proves \eqref{eq.akkt2}. 
			On the other hand, for each $i \in \Acal(x^*)$, since $\lim_{k\to \infty, k\in \mathcal{K}} c_i(x^*) = 0$,  
			it holds naturally that 
			$\lim_{k\to \infty, k\in \mathcal{K}} \min \{ \lambda_i^k/\rho^k , -c_i(x^*) \}= 0.$
			Overall, we have shown that $x^*$ satisfies   AKKT, completing the proof. 
		}
	\end{proof}

	\textcolor{blue}{
\emph{Strict Constraint Qualifications} (SCQ) together with AKKT guarantees the KKT conditions are satisfied \cite{akkt1, akkt2}, i.e., 
		\[ \text{AKKT + SCQ} \implies \text{KKT}. \]
		It is shown that many well-known CQs are SCQs, such as the Mangasarian-Fromovitz CQ 
		and linear independence of the gradients of active constraints (LICQ), and the weakest possible SCQ known in the literature is so-called the Cone-continuity Property (CCP) \cite{akkt1}, which is defined as following.   
	}
	
	\textcolor{blue}{
		\begin{definition}[Cone-Continuity Property (CCP)]\cite{akkt1}
			We say that $x \in \Omega:=\{x\in \mathbb{R}^n \mid c_i(x) = 0, i\in \Ecal \ \text{ and }\ c_i(x)\le 0, i\in \Ical\}$
			satisfies CCP if the set-valued mapping (multifunction) 
			$\mathbb{R}^n \ni x \rightrightarrows K(x):= \{ \sum_{i\in\Ecal} \mu_i \nabla c_i(x) + \sum_{ i\in\Acal(x)} \mu_i \nabla c_i(x)
			\ \text{ with } \ \mu_i \ge 0, i\in \Acal(x^*)\}$, is outer semicontinuous at $x^*$, that is,
			\[  \limsup_{x\to x^*} K(x) \subset K(x^*).\]
		\end{definition} 
	}

	\textcolor{blue}{
		In the following corollary, we summarize the results of all of our theorems.
		\begin{corollary}\label{main.corollary} Suppose Assumption~\ref{ass.subproblem} and \ref{global} hold. Then, exactly one of the following occurs
			\begin{enumerate}
				\item[(i)]  $\rho_k \to \rho_*$ for some constant $\rho_*>0$ and each limit point of $\{x^k\}$ either corresponds 
				to a KKT point or an infeasible stationary point for problem~\eqref{prob.nlp}.
				\item[(ii)] $\rho_k \to 0$ and all limit points of $\{x^k\}$ are infeasible stationary points for \eqref{prob.nlp}.
				\item[(iii)]  $\rho_k \to 0$, all limit points of $\{x^k\}$ are feasible for \eqref{prob.nlp}, and are 
				either KKT points or points where the CCP  fails. 
			\end{enumerate}
		\end{corollary}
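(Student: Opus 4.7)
The plan is to split on the limiting behavior of the monotonically non-increasing sequence $\{\rho_k\}$. Since $\rho_k > 0$ for all $k$ and is non-increasing (it only changes via \eqref{dust} or \eqref{psst}, both of which strictly shrink it), the sequence converges to some $\rho_* \ge 0$. The three alternatives of the corollary will correspond to $\rho_* > 0$ (giving (i)) and $\rho_* = 0$ (giving (ii) or (iii) according to the feasibility dichotomy).

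For case (i), I would first invoke Theorem~\ref{thm.global1}(i) to classify every limit point $x^*$ of $\{x^k\}$ as either feasible, i.e.\ $v(x^*) = 0$, or as an infeasible stationary point. For a feasible limit point, rather than relying on Theorem~\ref{thm.global1}(ii), which would require $v(x^k) \to 0$ along the \emph{entire} sequence, the cleaner route is to combine Lemma~\ref{lem.delta_2_zero}(iv), which supplies $\Delta l(d^*(\rho_*, x^*); \rho_*, x^*) = 0$, with Lemma~\ref{lem.subproblem}(iii): the hypotheses $\rho_* > 0$ and $v(x^*) = 0$ then yield that $x^*$ is a KKT point of \eqref{prob.nlp}, completing (i).

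For $\rho_* = 0$, Theorem~\ref{thm.global1}(iii) enforces the dichotomy that either all limit points are infeasible or all are feasible. In the first case, Theorem~\ref{thm.global1}(i) upgrades each such limit point to an infeasible stationary point, giving (ii). In the second case, the AKKT theorem proved immediately before the corollary states that each feasible limit point $x^*$ satisfies AKKT; invoking the fact from \cite{akkt1, akkt2} that AKKT combined with CCP implies KKT, we conclude that each such $x^*$ is either a KKT point or a point at which CCP fails, which is exactly (iii).

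The three outcomes are visibly mutually exclusive: (i) is characterized by $\rho_k \not\to 0$, whereas (ii) and (iii) both have $\rho_k \to 0$ and are distinguished by the all-feasible/all-infeasible dichotomy of Theorem~\ref{thm.global1}(iii). I expect the only delicate point to be the treatment of a feasible limit point inside case (i), where the temptation is to reach for Theorem~\ref{thm.global1}(ii); the main obstacle is avoiding its stronger hypothesis $v(x^k)\to 0$, and passing instead through Lemma~\ref{lem.delta_2_zero}(iv) and Lemma~\ref{lem.subproblem}(iii) circumvents this cleanly.
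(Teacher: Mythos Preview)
Your proposal is correct and matches the route the paper intends: the corollary is stated in the paper without an explicit proof, as a summary of Theorem~\ref{thm.global1} and the AKKT theorem immediately preceding it, and you assemble these pieces exactly as implied. Your observation that for a feasible limit point in case~(i) one should go directly through Lemma~\ref{lem.delta_2_zero}(iv) and Lemma~\ref{lem.subproblem}(iii) rather than Theorem~\ref{thm.global1}(ii) is apt---indeed the paper's own proof of Theorem~\ref{thm.global1}(ii) invokes precisely those two lemmas, so the hypothesis $v(x^k)\to 0$ appearing in its statement is not actually used, and your route sidesteps it cleanly.
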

	}

	It should be noticed that the results above discuss cases where $\{x^k\}$  has limit points.  If this is not the case, meaning $\{x^k\}$ is unbounded,  we can still show the optimality residuals (KKT error) converge for both the penalty problem and feasibility problem.  In fact, in all the cases we can show the worst-case complexity of optimality residuals, which is discussed in the next subsection.

	\subsection{Worst-case complexity for KKT residuals}
	
	In this subsection, we aim to show the worst-case complexity of the KKT residuals  for both the penalty problem and the feasibility problem, which are denoted as 
	\[
	\begin{aligned}
	E_{opt}(x, \lambda,\rho) & = \|\rho \nabla f(x) + \sum_{i\in\Ecal\cup\Ical} \lambda_i \nabla c_i(x) \|_*\\
	\text{and } \  E_{fea}(x, \nu) & = \|  \sum_{i\in\Ecal\cup\Ical} \nu_i \nabla c_i(x) \|_*,
	\end{aligned}
	\]
	The subproblem always chooses dual feasible variables $\lambda^k$ and $\nu^k$ satisfying \eqref{kkt dual feasible opt}. 
	Therefore, we   verify the satisfaction of complementarity \eqref{kkt comp opt} by defining the complementary residual as
	\[ E_{c}(x,\lambda) = \sum_{c_i>0}(1- \lambda_i)v_i(c_i(x))+\sum_{i\in\Ecal,c_i<0}(1+\lambda_i)v_i(c_i(x))+\sum_{i\in\Ical,c_i<0}\lambda_i |c_i(x)|.  \] 
	If $E_{opt}(x^k, \lambda^k,\rho_k) = 0$, $E_{c}(x^k,\lambda^k)=0$ and $v(x^k) = 0$ with $\rho_k>0$, we know $x^k$ is stationary for \eqref{prob.nlp}.  If $E_{fea}(x, \nu)$,  $E_{c}(x^k,\nu^k)=0$ and $v(x^k)>0$, then $x^k$ is an infeasible stationary point.

	Obviously, the KKT residual complexities depend on many factors especially the subproblem tolerance $\{\gamma_k\}$, since they 
	represent how accurately the subproblems are solved.  We make the following assumption about $\{\gamma_k\}$. 
	
	\begin{assumption}\label{gamma sum} The subproblem tolerance $\{\gamma_k\}$ are selected such that 
		$\gamma_k \le \eta k^{-\zeta/2}$ with constant $\eta>0$ and $\zeta\ge1$.
	\end{assumption}
	
	The parameters $\eta$ and $\zeta$ control accuracy of the subproblem solution. Larger $\zeta$ or small $\eta$ means more accurate subproblem solution is needed.

	The following lemma establishes  the relationship between the KKT residual and complementarity residual for feasibility and optimality problems.
	\begin{lemma}\label{kkt error and red}
		Under Assumption~\ref{ass.subproblem}, 
		\ref{global} and \ref{gamma sum}, it holds that, for all $k \in \mathbb{N}$,
		\begin{align}
		E_{opt}(x^k, \lambda^k, \rho^k) & \le \frac{1}{\delta\beta_\phi}\Delta l(d^k; \rho_k, x^k)+ \frac{1-\beta_\phi}{\delta\beta_\phi} \gamma_k ,\label{e.opt.delta1}\\
		E_{c}(x^k, \lambda^k) & \le \frac{1}{  \beta_\phi}\Delta l(d^k; \rho_k, x^k)+ \frac{1-\beta_\phi}{  \beta_\phi} \gamma_k ,\label{e.opt.delta2}\\
		E_{fea}(x^k, \nu^k) & \le \frac{1}{\delta\beta_v}\Delta l(d^k; 0, x^k)+ \frac{1-\beta_v}{\delta\beta_v} \gamma_k,\label{e.opt.delta3}\\
		\text{and }\  E_{c}(x^k, \nu^k) & \le \frac{1}{  \beta_\phi}\Delta l(d^k; 0, x^k)+ \frac{1-\beta_v}{  \beta_v} \gamma_k .\label{e.opt.delta4}
		\end{align}
	\end{lemma}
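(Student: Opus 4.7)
The plan is to reduce all four inequalities to the single algebraic identity
\[
l(0;\rho,x^k) - p(\lambda;\rho,x^k) \;=\; \delta\,E_{opt}(x^k,\lambda,\rho) + E_c(x^k,\lambda)
\]
(valid for any $\lambda$ dual-feasible in \eqref{dual sub}), together with its $\rho=0$ analogue, and then apply the inexactness conditions \eqref{red cond opt1} and \eqref{red cond fea1} that the subproblem solver enforces. The non-negativity of each of $E_{opt}$, $E_{fea}$, and $E_c$ will then allow us to split a combined bound into the two separate bounds the lemma asserts.

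First I would establish the purely algebraic identity
\[
v(x) \;-\; \langle c(x),\lambda\rangle \;=\; E_c(x,\lambda)
\]
for every $\lambda$ satisfying $\lambda_{\Ecal}\in[-1,1]^{|\Ecal|}$ and $\lambda_{\Ical}\in[0,1]^{|\Ical|}$. This is a case-by-case verification: split the index set into the six buckets $\{i\in\Ecal, c_i>0\}$, $\{i\in\Ecal,c_i<0\}$, $\{i\in\Ical,c_i>0\}$, $\{i\in\Ical,c_i<0\}$, and the two $c_i=0$ cases. For example, when $i\in\Ecal$ and $c_i<0$, the contribution to the left-hand side is $-c_i-\lambda_i c_i = (1+\lambda_i)|c_i| = (1+\lambda_i)v_i(c_i)$, which is precisely the corresponding term in the definition of $E_c$; the other cases fall out identically. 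Adding the (omitted) constant $\rho f(x^k)$ back to $l(0;\rho,x^k)=v(x^k)$ is not needed because the definition of $p$ in \eqref{dual sub} also excludes it, so using $l(0;\rho,x^k)=v(x^k)=\sum_i v_i(b_i)$ together with the identity gives
\[
l(0;\rho,x^k)-p(\lambda;\rho,x^k) \;=\; \delta\bigl\|\rho\nabla f(x^k)+\textstyle\sum_i \lambda_i\nabla c_i(x^k)\bigr\|_* \;+\; E_c(x^k,\lambda),
\]
i.e.\ $\delta E_{opt}(x^k,\lambda,\rho)+E_c(x^k,\lambda)$. Setting $\rho=0$ and $\lambda=\nu^k$ yields $\delta E_{fea}(x^k,\nu^k)+E_c(x^k,\nu^k)$.

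Next I would invoke the acceptance criteria. Rearranging \eqref{red cond opt1} gives
\[
l(0;\rho_k,x^k)-p(\lambda^k;\rho_k,x^k) \;\le\; \frac{1}{\beta_\phi}\,\Delta l(d^k;\rho_k,x^k) \;+\; \frac{1-\beta_\phi}{\beta_\phi}\,\gamma_k,
\]
and substituting the identity from step~1 produces
\[
\delta E_{opt}(x^k,\lambda^k,\rho_k) + E_c(x^k,\lambda^k) \;\le\; \frac{1}{\beta_\phi}\Delta l(d^k;\rho_k,x^k)+\frac{1-\beta_\phi}{\beta_\phi}\gamma_k.
\]
Since both $E_{opt}\ge 0$ and $E_c\ge 0$, dropping either term yields \eqref{e.opt.delta1} (after dividing by $\delta$) and \eqref{e.opt.delta2} respectively. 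The same manoeuvre with \eqref{red cond fea1} applied to $(\nu^k,0)$---legitimate because \eqref{eq.dual_no_worse} makes the $\nu^k$-version of \eqref{red cond fea1} at least as strong as the $\lambda^k$-version---gives the combined bound
\[
\delta E_{fea}(x^k,\nu^k)+E_c(x^k,\nu^k) \;\le\; \frac{1}{\beta_v}\Delta l(d^k;0,x^k)+\frac{1-\beta_v}{\beta_v}\gamma_k,
\]
from which \eqref{e.opt.delta3} and \eqref{e.opt.delta4} are read off.

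The only step that requires genuine bookkeeping is the identity in step~1; once that is in hand, the rest is one rearrangement plus the observation that dropping a non-negative summand preserves an upper bound. The one thing I would flag is that my derivation gives the first coefficient in \eqref{e.opt.delta4} as $1/\beta_v$ rather than $1/\beta_\phi$; since $\beta_v<\beta_\phi$ this is strictly weaker, so either (e.opt.delta4) contains a typographical $\beta_\phi\leftrightarrow\beta_v$ swap, or an additional argument exploiting \eqref{red.comp} (which directly controls a complementarity-like quantity $\chi$ at the shifted iterate) is intended---I would state the result with $\beta_v$ and note the discrepancy in the write-up.
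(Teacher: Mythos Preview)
Your approach is essentially identical to the paper's. The paper first verifies, by the same case split you describe, the identity $v(x^k)-\langle c(x^k),\lambda^k\rangle=E_c(x^k,\lambda^k)$ (this is equation \eqref{vl.comp}), rewrites $l(0;\rho_k,x^k)-p(\lambda^k;\rho_k,x^k)=\delta E_{opt}+E_c$, invokes \eqref{red.penalty} to bound the left side by $\tfrac{1}{\beta_\phi}\Delta l(d^k;\rho_k,x^k)+\tfrac{1-\beta_\phi}{\beta_\phi}\gamma_k$, and then drops whichever non-negative summand is not needed; the feasibility half is dispatched with the sentence ``the same argument applied to $l(0;0,x^k)-p(\cdot;0,x^k)$ and \eqref{red.fea}.''

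Your flag on \eqref{e.opt.delta4} is well taken: the paper's own argument, applied verbatim with $\beta_v$ in place of $\beta_\phi$, yields a leading coefficient $1/\beta_v$, not the printed $1/\beta_\phi$, so this is indeed a typographical slip in the statement (and the downstream Theorem~\ref{oracle}(iv) is in fact written with the $\beta_\phi$ constant, consistent with the typo being carried forward rather than with a separate argument existing). Stating it with $1/\beta_v$, as you propose, is the correct course.
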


	\begin{proof}
		For  dual feasible $\lambda^k  $  
		\begin{equation}\label{vl.comp} v(x^k) - \sum_{i\in\Ecal\cup\Ical} \lambda^k_i c_i(x^k) 
		=    E_{c}(x^k,\lambda^k)
		\end{equation}
		which follows  from the fact that (where we temporarily use $c_i^k=c_i(x^k)$ due to space limit)
		\[\begin{cases}
		v_i(c_i^k) - \lambda_i^k c_i^k= v_i(c_i^k) - \lambda_i^k v_i(c_i^k) = (1-\lambda_i^k)v_i(c_i^k) & \text{if } c_i^k > 0,\\
		v_i(c_i^k) - \lambda_i^k c_i^k= v_i(c_i^k) + \lambda_i^kv_i(c_i^k) =  (1+\lambda_i^k)v_i(c_i^k) & \text{if } c_i^k < 0,  i\in\Ecal\\
		v_i(c_i^k) - \lambda_i^k c_i^k = 0 - \lambda_i^k  c_i^k  =   \lambda_i^k  |c_i^k|, &\text{if }  c_i^k < 0, i\in\Ical .
		\end{cases} \]
		Then
		\[\begin{aligned}
		l(0; \rho_k, x^k) - p(\lambda^k; \rho_k, x^k) & = \delta E_{opt}(x^k, \lambda^k, \rho_k) + v(x^k) - \sum_{i\in\Ecal\cup\Ical} \lambda^k_i c_i(x^k)\\
		& = \delta E_{opt}(x^k, \lambda^k, \rho_k) +  E_{c}(x^k,\lambda^k). 
		\end{aligned}  \]
		On the other hand, it holds that 
		\[ l(0; \rho_k, x^k) - p(\lambda^k;  \rho_k, x^k) \le \frac{1}{\beta_\phi} \Delta l(d^k; \rho_k, x^k) + \frac{1-\beta_\phi}{\beta_\phi} \gamma_k\]
		from \eqref{red.penalty}. 
		Combining the above yields \eqref{e.opt.delta1} and \eqref{e.opt.delta2}.   The same argument applied to 
		$l(0; 0, x^k) - p(\lambda^k;  0, x^k)$ and \eqref{red.fea}  proves  \eqref{e.opt.delta3} and \eqref{e.opt.delta4}. 
	\end{proof}
	
	As the sequence $\{\varphi(x^k,\rho_k)\}$ has been shown in Lemma~\ref{lem.varphi} to be monotonically decreasing, we can denote the initial penalty function value $\varphi^0:=\varphi(x^0, \rho_0)$ and the limit 
	$\varphi^* := \lim\limits_{k\to\infty} \varphi(x^k,\rho_k)$ and derive the following complexity results for model reductions. 
	\begin{lemma}\label{lem.delta.complexity}
		Under Assumption~\ref{ass.subproblem}, 
		\ref{global} and \ref{gamma sum},  for any $\epsilon>0$, the following statements hold true
		\begin{enumerate}
			\item[(i)]  It needs at most 
			\[ \frac{\delta\kappa_0\kappa_1( \varphi^0 - \varphi^* )}{\theta_\alpha \beta_\alpha(1-\beta_\alpha)} \frac{1}{\epsilon^2}   \] 
			iterations to reach $\inf_{i = 0}^k \Delta l(d^i; \rho_i, x^i) \le \epsilon$.  
			\item[(ii)]  It needs at most 
			\[ \max\Big\{\frac{4\delta\kappa_0\kappa_1(\varphi^0 - \varphi^* )}{\theta_\alpha  \beta_l^{2} \beta_\alpha(1-\beta_\alpha)} \frac{1}{\epsilon^2}, \left[\frac{2\eta(1-\beta_l)}{\beta_l  \epsilon} \right]^{\frac{2}{\zeta}}\Big\} \] 
			iterations to reach $\inf_{i = 0}^k \Delta l(d^i; 0, x^i) \le \epsilon$.  
		\end{enumerate}
	\end{lemma}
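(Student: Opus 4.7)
The plan is to obtain both bounds by telescoping the descent inequality from Lemma~\ref{lem.varphi} and then relating the penalty-model reduction to the feasibility-model reduction via the \eqref{dust.after} inequality.

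For part $(i)$, I would start from Lemma~\ref{lem.varphi}, namely
\[
\varphi(x^{k+1},\rho_{k+1}) \le \varphi(x^k,\rho_k) - \tfrac{\theta_\alpha(1-\beta_\alpha)\beta_\alpha}{\delta\kappa_0\kappa_1}\bigl[\Delta l(d^k;\rho_k,x^k)\bigr]^2,
\]
and sum from $i=0$ to $k$. Since $\varphi \ge 0$ (so $\varphi(x^{k+1},\rho_{k+1})\ge \varphi^*$), this telescoping yields
\[
\sum_{i=0}^{k} \bigl[\Delta l(d^i;\rho_i,x^i)\bigr]^2 \le \tfrac{\delta\kappa_0\kappa_1(\varphi^0-\varphi^*)}{\theta_\alpha\beta_\alpha(1-\beta_\alpha)}.
\]
Then a standard ``if the infimum exceeds $\epsilon$ then every term exceeds $\epsilon^2$'' argument gives the claimed $O(1/\epsilon^2)$ iteration bound for $\inf_i \Delta l(d^i;\rho_i,x^i) \le \epsilon$.

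For part $(ii)$, I would invoke \eqref{dust.after}, which can be rearranged as
\[
\Delta l(d^k;0,x^k) \le \tfrac{1}{\beta_l}\Delta l(d^k;\rho_k,x^k) + \tfrac{1-\beta_l}{\beta_l}\gamma_k.
\]
Thus a sufficient condition for $\Delta l(d^k;0,x^k)\le \epsilon$ is that both summands on the right are at most $\epsilon/2$, i.e.\ $\Delta l(d^k;\rho_k,x^k) \le \beta_l\epsilon/2$ and $\gamma_k \le \beta_l\epsilon/(2(1-\beta_l))$. Applying part $(i)$ with tolerance $\beta_l\epsilon/2$ in place of $\epsilon$ produces the first term inside the max. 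For the second condition, Assumption~\ref{gamma sum} guarantees $\gamma_k \le \eta k^{-\zeta/2}$, so $\gamma_k$ falls below $\beta_l\epsilon/(2(1-\beta_l))$ once $k \ge [2\eta(1-\beta_l)/(\beta_l\epsilon)]^{2/\zeta}$, which is the second term inside the max. Taking the larger of these two bounds yields the desired complexity.

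The argument is essentially a two-step routine calculation, and I do not foresee a real obstacle; the only subtle point is bookkeeping the constants so that they match the statement exactly---in particular, choosing the $\epsilon/2$ split for both contributions in $(ii)$ (which produces the factor $\beta_l^2$ and the factor $4$ in the numerator of the first term), and being careful that the sum in $(i)$ runs over $k+1$ terms so that the bound on $k$ has the right constant. No additional assumption beyond Assumptions~\ref{ass.subproblem}, \ref{global}, and~\ref{gamma sum} is required, and $\varphi^* $ is well defined since $\{\varphi(x^k,\rho_k)\}$ is monotonically decreasing and bounded below by zero.
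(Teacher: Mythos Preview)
Your proposal is correct and follows essentially the same approach as the paper's proof: telescoping the descent inequality of Lemma~\ref{lem.varphi} to get the summable bound and then the $\inf$ bound for part~$(i)$, and for part~$(ii)$ rearranging \eqref{dust.after} and splitting the $\epsilon$-tolerance into two halves, handling the first via part~$(i)$ with tolerance $\beta_l\epsilon/2$ and the second via Assumption~\ref{gamma sum}. The constants and the $\epsilon/2$ split match the paper exactly.
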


	\begin{proof}  For Part $(i)$, from  Lemma~\ref{lem.varphi},  summing up both sides of \eqref{var.red.rho} from 0 to $k$ gives
		\begin{align}
		\sum_{t=0}^k [\Delta l(d^t; \rho_t, x^t)]^2  \le  \frac{\delta\kappa_0\kappa_1}{\theta_\alpha(1-\beta_\alpha)\beta_\alpha} [\varphi(x^0,\rho_0) - \varphi(x^{k+1}, \rho_{k+1})  ] \ \forall k \in \mathbb{N}.\label{inf.red.rho}
		\end{align}
		Therefore, 
		\begin{align*}
		\inf_{i = 0}^k [\Delta l(d^i; \rho_i, x^i)]^2  \le  \frac{\delta\kappa_0\kappa_1}{ k  \theta_\alpha(1-\beta_\alpha)\beta_\alpha} [\varphi(x^0,\rho_0) - \varphi^* ], 
		\end{align*}
		completing the proof of $(i)$. 
		
		It follows from  \eqref{dust.after} that  
		\[ \Delta l(d^k; 0, x^k)  \le \tfrac{1}{\beta_l }  \Delta l(d^k; \rho_k, x^k) + \tfrac{1-\beta_l}{\beta_l}\gamma_k  .\]
		Part $(i)$ and Assumption~\ref{gamma sum} implies if 
		\[ k \ge \max\Big\{\frac{4\delta\kappa_0\kappa_1[\varphi(x^0,\rho_0) - \varphi^* ]}{\theta_\alpha  \beta_l^{2} \beta_\alpha(1-\beta_\alpha)} \frac{1}{\epsilon^2}, \left[\frac{2\eta(1-\beta_l)}{\beta_l  \epsilon} \right]^{\frac{2}{\zeta}}\Big\},\]
		then
		\[
		\inf_{i = 0}^k \tfrac{1}{\beta_l }  \Delta l(d^i; \rho_i, x^i)  \le  \epsilon/2 \quad \text{ and }\quad 
		\tfrac{1-\beta_l}{\beta_l} \gamma_k \le\epsilon/2,
		\] 
		completing the proof.
	\end{proof}

	Lemma~\ref{kkt error and red} and \ref{lem.delta.complexity} immediately lead to our main results.

	\begin{theorem}\label{oracle}
		Under Assumption~\ref{ass.subproblem}, 
		\ref{global} and \ref{gamma sum},     given $\epsilon>0$, the following statements hold true. 
		\begin{enumerate}
			\item[(i)]  It requires at most 
			\[ \max\Big\{\frac{ 4\kappa_0\kappa_1 (\varphi^0 - \varphi^* )}{\delta \beta_\phi^2   \theta_\alpha \beta_\alpha (1-\beta_\alpha)} \frac{1}{\epsilon^2},[\frac{2\eta(1-\beta_\phi)}{\delta\beta_\phi } \frac{1}{\epsilon} ]^{\frac{2}{\zeta}}\Big\} \] 
			iterations to reach $\inf_{i=0}^k E_{opt}(x^i, \lambda^i,\rho_i)   \le \epsilon$.
			\item[(ii)] It requires at most
			\[ \max\Big\{\frac{ 4 \delta \kappa_0\kappa_1  (\varphi^0 - \varphi^* )}{ \beta_\phi^2   \theta_\alpha \beta_\alpha (1-\beta_\alpha)} \frac{1}{\epsilon^2}, [\frac{2\eta(1-\beta_\phi)}{ \beta_\phi } \frac{1}{\epsilon}]^{\frac{2}{\zeta}}\Big\}  \] 
			iterations to reach $\inf_{i=0}^k E_{c}(x^i, \lambda^i)   \le \epsilon$.
			\item[(iii)]  It requires at most 
			\[ \max\Big\{\frac{16 \kappa_0\kappa_1 (\varphi^0 - \varphi^* )}{\delta \theta_\alpha \beta_v^2  \beta_l \beta_\alpha(1-\beta_\alpha)} \frac{1}{\epsilon^2}, \left[ \frac{2\eta(1-\beta_l)}{\delta \beta_v \beta_l \epsilon}\right]^{\frac{2}{\zeta}}, \left[\frac{2\eta(1-\beta_v)}{\delta\beta_v \epsilon} \right] ^{\frac{2}{\zeta}} \Big\}\] 
			iterations to reach $\inf_{i=0}^k E_{fea}(x^i, \nu^i)   \le \epsilon$.
			\item[(iv)]  It requires at most 
			\[ \max\Big\{\frac{16 \kappa_0\kappa_1 (\varphi^0 - \varphi^* )}{ \theta_\alpha \beta_\phi^2  \beta_l \beta_\alpha(1-\beta_\alpha)} \frac{1}{\epsilon^2}, \left[ \frac{2\eta(1-\beta_l)}{ \beta_\phi \beta_l \epsilon}\right]^{\frac{2}{\zeta}},  \left[\frac{2\eta(1-\beta_v)}{ \beta_v\epsilon} \right] ^{\frac{2}{\zeta}} \Big\}\] 
			iterations to reach $\inf_{i=0}^k E_{c}(x^i, \nu^i)  \le \epsilon$.
			
		\end{enumerate}
	\end{theorem}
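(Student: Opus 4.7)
The plan is to derive each of the four bounds by the same template: combine the residual-reduction inequality from Lemma~\ref{kkt error and red} with the model-reduction complexity from Lemma~\ref{lem.delta.complexity} and the tail bound on $\gamma_k$ from Assumption~\ref{gamma sum}. Concretely, for part $(i)$, Lemma~\ref{kkt error and red} gives $E_{opt}(x^k,\lambda^k,\rho_k) \le \frac{1}{\delta\beta_\phi}\Delta l(d^k;\rho_k,x^k) + \frac{1-\beta_\phi}{\delta\beta_\phi}\gamma_k$. To force this below $\epsilon$, split the budget as $\epsilon/2$ for each summand. The first half requires $\Delta l(d^k;\rho_k,x^k) \le \delta\beta_\phi\epsilon/2$, and Lemma~\ref{lem.delta.complexity}$(i)$ supplies the iteration count by substituting this threshold into its bound. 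The second half requires $\gamma_k \le \delta\beta_\phi\epsilon/[2(1-\beta_\phi)]$, and since $\gamma_k \le \eta k^{-\zeta/2}$, inverting this inequality for $k$ yields the second expression in the $\max$. Taking the larger of the two gives the stated count.

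For parts $(ii)$, $(iii)$, and $(iv)$ I would follow the identical recipe. Part $(ii)$ uses \eqref{e.opt.delta2} (dropping the $\delta$ factor) with the same $\Delta l(d^k;\rho_k,x^k)$ reduction, which is why the numerator gains a $\delta$ factor relative to $(i)$. Parts $(iii)$ and $(iv)$ use \eqref{e.opt.delta3} and \eqref{e.opt.delta4}, which bound the residuals in terms of $\Delta l(d^k;0,x^k)$, so I invoke Lemma~\ref{lem.delta.complexity}$(ii)$ rather than $(i)$. This accounts for the additional factor of $4$ in the first term of the $\max$ for $(iii)$ and $(iv)$ compared to $(i)$ and $(ii)$, and introduces a second $\gamma_k$-type entry in the $\max$ because Lemma~\ref{lem.delta.complexity}$(ii)$ already contains its own $\gamma_k$ threshold coming from \eqref{dust.after}. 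The only extra care is to split $\epsilon$ into the two separate $\gamma_k$ terms, one arising inside Lemma~\ref{lem.delta.complexity}$(ii)$ and the other from the Lemma~\ref{kkt error and red} bound, and then invert each using Assumption~\ref{gamma sum} to obtain the two exponents $\frac{2}{\zeta}$.

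The routine work is just bookkeeping the constants so that the $\beta_\phi$, $\beta_v$, $\beta_l$, $\delta$, and $(1-\beta_\phi)/(1-\beta_v)/(1-\beta_l)$ factors land in the stated positions; no new idea is needed beyond the two lemmas. The only mild subtlety I foresee is in parts $(iii)$ and $(iv)$: one must correctly partition $\epsilon$ into three pieces of size $\epsilon/2$, $\epsilon/4$, $\epsilon/4$ (or similar), where one of these pieces controls the $\Delta l(d^k;\rho_k,x^k)$ piece hidden inside Lemma~\ref{lem.delta.complexity}$(ii)$'s use of \eqref{dust.after}, another controls the explicit $\gamma_k$ tail in Lemma~\ref{lem.delta.complexity}$(ii)$, and the third controls the $\gamma_k$ tail from Lemma~\ref{kkt error and red}. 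Aligning these three pieces with the three arguments of the $\max$ in $(iii)$ and $(iv)$ is the main source of possible sign/constant slip, but no genuine obstacle arises. Once this accounting is done, each statement follows by taking the max over the iteration counts required to push each piece below its share of $\epsilon$.
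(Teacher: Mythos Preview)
Your proposal is correct and follows essentially the same approach as the paper: combine the appropriate inequality from Lemma~\ref{kkt error and red} with the appropriate part of Lemma~\ref{lem.delta.complexity}, splitting $\epsilon$ into halves and inverting the bound $\gamma_k\le \eta k^{-\zeta/2}$ from Assumption~\ref{gamma sum}. The paper does the outer split for parts $(iii)$ and $(iv)$ as $\epsilon/2+\epsilon/2$ (feeding $\delta\beta_v\epsilon/2$, respectively $\beta_\phi\epsilon/2$, directly into Lemma~\ref{lem.delta.complexity}$(ii)$, which already contains its own internal two-term max), rather than your explicit three-way partition, but the bookkeeping is equivalent.
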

	

	\begin{proof} Part $(i)$ can be derived by requiring 
		$\frac{1}{\delta\beta_\phi}\inf_{i=0}^k\Delta l(d^i; \rho_i, x^i)\le \epsilon/2$ and $\frac{1-\beta_\phi}{\delta\beta_\phi} \gamma_k\le \epsilon/2$
		and then  combining     \eqref{e.opt.delta1}  and Lemma~\ref{lem.delta.complexity}$(i)$. 
		
		Part $(ii)$ can be derived by requiring 
		$\frac{1}{ \beta_\phi}\inf_{i=0}^k\Delta l(d^i; \rho_i, x^i)\le \epsilon/2$ and $\frac{1-\beta_\phi}{ \beta_\phi} \gamma_k\le \epsilon/2$
		and then  combining     \eqref{e.opt.delta2}  and Lemma~\ref{lem.delta.complexity}$(i)$. 
		
		Part $(iii)$ is from   \eqref{e.opt.delta3},  Lemma~\ref{lem.delta.complexity}$(ii)$ by  requiring 
		$\frac{1}{\delta \beta_v}\inf_{i=0}^k\Delta l(d^i; 0, x^i)\le \epsilon/2$ and $\frac{1-\beta_v}{\delta \beta_v} \gamma_k\le \epsilon/2$.

		Part $(iv)$ is from   \eqref{e.opt.delta4},  Lemma~\ref{lem.delta.complexity}$(ii)$ by requiring 
		$\frac{1}{ \beta_\phi}\inf_{i=0}^k\Delta l(d^i; 0, x^i)\le \epsilon/2$ and $\frac{1-\beta_v}{  \beta_v} \gamma_k\le \epsilon/2$.
		
	\end{proof}
	
	\subsection{Local complexity of constraint violation} 
	
	We have summarized  the (global) complexity of stationarity and complementarity for both feasible and infeasible cases   in Theorem~\ref{oracle}, and the dual feasibility is maintained all the time during the iteration of the algorithm.  Therefore, we still need to analyze the complexity of primal feasibility when the iterates converge to an optimal solution.  Notice that this is not a concern  in the infeasible case, since Theorem~\ref{oracle} 
	is sufficient for the complexity of KKT residuals of the feasibility problems.  Therefore, in this section, we assume that  $\{x^k\}$ only has feasible limit points.  
	
	The analysis of the behavior $v(x)$ may rely on the monotonic behavior of the penalty function.  However, from Corollary~\ref{main.corollary}, one cannot expect   $v(x)$ decreases steadily over the iterations.  In early iterations, it could happen that the constraint violation continues deteriorating while the objective is improving.  Instead we should focus on the local behavior of $v(x)$ around a limit point $x^*$. 
	Our analysis for $v(x)$ applies to the case that   $\{x^k\}$ converges to a feasible $x^*$ where  strict complementarity is satisfied.
	
	We summarize the local complexity of constraint violation $v(x)$ of $\{x^k\}$ in the following theorem.  
	
	\begin{theorem}\label{thm.vlocal}
		Under Assumption~\ref{ass.subproblem}, 
		\ref{global} and \ref{gamma sum}, suppose  that  $ \{(x^k, \lambda^k) \} \to (x^*,\lambda^*) $ with $v(x^{*}) = 0$  and $- e < \lambda^*_{\Ecal} < e, 0 < \lambda^*_{\Acal} < e$.  
		Then for any $0< \tau < 1- \| \lambda^*\|_\infty$, there exists $\bar k \in \mathbb{N}$ such that the following statements hold true: 
		\begin{enumerate}
			\item[(i)]  
			$ v(x_k) \le E_c(x^k; \lambda^k ) / \tau$ for any $k>\bar k$.
			\item[(ii)] It requires at most 
			\[ \max\Big\{ \frac{ 4 \delta \kappa_0\kappa_1 [\varphi(x^{\bar{k}},\rho_{\bar{k}}) - \varphi^* ]}{ \tau \beta_\phi^2   \theta_\alpha \beta_\alpha (1-\beta_\alpha)} \frac{1}{\epsilon^2}, \frac{1}{\tau}[\frac{2\eta(1-\beta_\phi)}{ \beta_\phi } \frac{1}{\epsilon}]^{\frac{2}{\zeta}} \Big\} \] 
			additional iterations  to reach $\inf_{i = \bar{k}}^kv(x^i)\le \epsilon$ for given $\epsilon > 0$. 
		\end{enumerate} 
	\end{theorem}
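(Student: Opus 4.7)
The plan is to split the argument into the two parts exactly as stated, treat part (i) as a ``strict complementarity'' bound that allows us to replace $v(x^k)$ by $E_c(x^k,\lambda^k)$, and then obtain part (ii) as an immediate consequence of part (i) together with the KKT–residual complexity already established in Theorem~\ref{oracle}(ii). Throughout I will use the decomposition $\Ical = \Acal(x^*) \cup \Ncal(x^*)$ and the fact that $\lambda^k \to \lambda^*$ componentwise.

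For part (i), I first argue that only \emph{active} or equality constraints can contribute to $v(x^k)$ for large $k$. Since $c_i(x^*) < 0$ for every $i \in \Ncal(x^*)$, there is some $\bar k_1$ such that $c_i(x^k) < 0$, hence $v_i(c_i(x^k)) = 0$, for every $k \ge \bar k_1$ and every $i \in \Ncal(x^*)$. I then choose $\bar k \ge \bar k_1$ so that additionally $\max\{|\lambda^k_i - \lambda^*_i| : i \in \Ecal\cup\Ical\} \le (1-\|\lambda^*\|_\infty) - \tau$ for all $k > \bar k$; this is possible because $\tau < 1 - \|\lambda^*\|_\infty$ and $\lambda^k \to \lambda^*$. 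For such $k$ I check, index by index, that each term in $v(x^k)$ is majorized by the corresponding term in $E_c(x^k,\lambda^k)$ divided by $\tau$: if $i \in \Ecal\cup\Acal(x^*)$ and $c_i(x^k) > 0$, the term in $E_c$ is $(1-\lambda^k_i) v_i(c_i(x^k))$ with $1-\lambda^k_i \ge \tau$ by the choice of $\bar k$; if $i \in \Ecal$ and $c_i(x^k) < 0$, the corresponding $E_c$ term is $(1+\lambda^k_i)v_i(c_i(x^k))$ with $1+\lambda^k_i \ge \tau$; and if $i \in \Ical$ with $c_i(x^k) < 0$ or $i\in \Ncal(x^*)$, the contribution to $v$ is zero so the inequality is trivial. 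Summing over $i$ gives $\tau\, v(x^k) \le E_c(x^k,\lambda^k)$, which is (i).

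For part (ii), I combine (i) with Theorem~\ref{oracle}(ii) applied to the tail $\{x^k\}_{k \ge \bar k}$. To force $v(x^k) \le \epsilon$ it suffices to drive $E_c(x^k,\lambda^k) \le \tau\epsilon$, and I replay the proof of Lemma~\ref{lem.delta.complexity}(i) / Theorem~\ref{oracle}(ii) with initial penalty value $\varphi(x^{\bar k}, \rho_{\bar k})$ in place of $\varphi^0$ and with $\tau\epsilon$ in place of $\epsilon$. Splitting the two terms of the bound \eqref{e.opt.delta2} as in the proof of Theorem~\ref{oracle}(ii)---one half allocated to $\tfrac{1}{\beta_\phi}\Delta l(d^k;\rho_k,x^k)$ and the other to $\tfrac{1-\beta_\phi}{\beta_\phi}\gamma_k$---produces, after rearrangement, exactly the two expressions inside the $\max\{\cdot,\cdot\}$ of the stated bound, with the factor $1/\tau$ absorbing the rescaling $\epsilon \mapsto \tau\epsilon$ in the respective slots.

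The main technical step is the case analysis in part (i): one has to check carefully that \emph{every} index contributing to $v(x^k)$ is matched by a term in $E_c$ whose coefficient is bounded below by $\tau$, using strict complementarity on $\Acal(x^*)$ and the strict interior condition $-e < \lambda^*_\Ecal < e$ on the equality block, while simultaneously ruling out spurious contributions from $\Ncal(x^*)$ via the sign of $c_i(x^*)$. Part (ii) is then a mechanical specialization of the already-proved global complexity; the only subtlety is bookkeeping so that the $\tau$–factor appears consistently with the statement (and starting the telescoping of \eqref{var.red.rho} at $\bar k$ rather than $0$, which is what produces $\varphi(x^{\bar k},\rho_{\bar k}) - \varphi^*$ in the numerator).
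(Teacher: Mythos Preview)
Your proposal is correct and follows essentially the same route as the paper. The only cosmetic difference is that the paper invokes the identity \eqref{vl.comp}, namely $E_c(x^k,\lambda^k) = v(x^k) - \sum_i \lambda_i^k c_i(x^k)$, and then bounds $v_i(c_i^k) - \lambda_i^k c_i^k \ge \tau\, v_i(c_i^k)$ term by term, whereas you work directly from the three-sum definition of $E_c$; the case analysis (equality indices, active inequalities, inactive inequalities with $c_i(x^k)<0$) and the use of Theorem~\ref{oracle}(ii) restarted at $\bar k$ for part~(ii) are identical.
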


	\begin{proof}  We first prove Part $(i)$.  
		Given $0< \tau < 1- \| \lambda^*\|_\infty$, there exists $\bar{k} \in \mathbb{N}$ such that for all $k \ge \bar{k}$, the following holds
		\begin{align*}
		-1+\tau \le \lambda^k_i \le 1-\tau,  &\quad   i\in\Ecal\\
		0< \lambda^k_i \le 1-\tau,                &\quad  i\in \Acal(x^*)\\
		0\le \lambda^k_i \le \tau,\  \  \   \  \                 &\quad i\in\Ncal(x^*).\\
		c_i(x^k) \le 0 ,\  \  \   \  \                 &\quad i\in\Ncal(x^*).
		\end{align*}
		Therefore,  
		\begin{align*}
		v_i(c_i^k) - \lambda_i^k c_i^k & = |c_i^k| - \lambda_i^k c_i^k \ge   |c_i^k| - (1-\tau) |c_i^k|  \ge   \tau |c_i^k| ,  &&   i\in\Ecal  \\
		v_i(c_i^k) - \lambda_i^k c_i^k & = (c_i^k)_+ - \lambda_i^k c_i^k \ge (c_i^k)_+ - (1-\tau) (c_i^k)_+  \ge  \tau (c_i^k)_+ ,  &&   i\in\Acal(x^*)  \\
		v_i(c_i^k) - \lambda_i^k c_i^k & = (c_i^k)_+ - \lambda_i^k c_i^k  = 0 - \lambda_i^k c_i^k \ge 0,  &&   i\in\Ncal(x^*) .
		\end{align*}
		Hence 
		\[ v(x^k)-  \sum_{i\in\Ecal\cup\Ical}   \lambda_i^k c_i^k  = \sum_{i\in\Ecal\cup\Ical} (v_i(c_i^k) - \lambda_i^k c_i^k)  \ge \tau v(x^k). \]
		This, combined with \eqref{vl.comp}, yields 
		$E_c(x^k; \lambda^k ) \ge \tau v(x^k)$, 
		completing the proof of Part $(i)$.
		
		Part $(ii)$   follows naturally from Theorem~\ref{oracle}$(ii)$ by replacing starting point $x^0$ with $x^{\bar{k}}$.
	\end{proof}

	We emphasize that the local complexity result is derived under quite {\color{blue}mild} assumptions compared with other nonlinear optimization methods. 
	The strictly complementary condition  in Theorem~\ref{thm.vlocal} is commonly used in penalty-SQP methods for analyzing the local convergence rate \cite{ byrd2010infeasibility, burke2014sequential}.    Indeed,   second-order methods (interior point methods or SQP methods)  for   constrained nonlinear optimization generally analyze local convergence by assuming strictly complementary condition, regular condition  and second-order sufficient condition.  These three conditions are also required to hold true in \cite{Fletcher} for analyzing the local behavior of the SLP algorithm. 
	
	The other aspect to notice is about the constant $\zeta$, which controls how fast $\gamma_k$ tends to 0. The complexity results we have derived consists $O(\epsilon^{-2})$ and $O(\epsilon^{-2/\zeta})$.  If we choose $\zeta \ge 1$, meaning $\gamma_k\sim O(k^{-1})$, then 
	overall we have $O(\epsilon^{-2})$ in the complexity results.  On the contrary, if we want to drive $\gamma_k$ to zero slower than $O(k^{-1})$, then we have $O(\epsilon^{-2/\zeta})$ in the complexity results.

	\section{Subproblem algorithms} \label{subal}
	
	In this section, we apply   simplex methods to solve the subproblem by focus on $\ell_\infty$ norm trust region in \eqref{primal sub} . Since   the discussion focus on the subproblem at 
	the $k$th iteration,  we drop $x^k$ and the iteration number $k$ and use the shorthand notation as introduced in \S\ref{sec.update}.
	\textcolor{blue}{It should be noticed that using primal simplex is not required in our proposed method, and our framework can accept any 
		subproblem solver that can generate primal-dual feasible iterates. }

	
	Using $\ell_\infty$ norm trust region in \eqref{primal sub}  results in subproblem
	\begin{equation}\label{sub.primal.lp}
	\baligned
	\min_{(d, r, s, t)} &\   \langle \rho g, d\rangle + \langle e, r+s\rangle + \langle e, t\rangle \ \\
	\st &\  \langle a_i, d\rangle + b_i = r_i-s_i, \ i\in\Ecal, \\   
	&\  \langle a_i, d\rangle + b_i \le t_i, \qquad\  i\in\Ical, \\   
	& -\delta e \le  d \le \delta e, \   (r,s,t) \ge 0.
	\ealigned
	\end{equation}
	by adding auxiliary variables $(r, s, t)$. To see how a primal simplex method could benefit from the structures of \eqref{sub.primal.lp}, 
	we  rewrite the  standard form  of  \eqref{sub.primal.lp}  as 
	\begin{equation}\label{primal.standard}
	\min_{\bar x \in \mathbb{R}^{\bar n}} \   \bar c^T \bar x \quad\quad \st\ \ \bar A \bar x = \bar b,\  \bar x \ge 0
	\end{equation}
	with $\bar n = {4n+2|\Ecal|+2|\Ical|}$ by splitting $d$ into $(d_+, d_-)$ and adding slack variables $(z, u, v)$, 
	where 
	\begin{equation*}
	\bar c = 
	\begin{bmatrix}
	\rho g\\
	-  \rho g\\
	e\\ 
	e \\ 
	- e\\
	0 \\
	0 \\
	0
	\end{bmatrix}, 
	\bar x = 
	\begin{bmatrix} d_+ \\ d_- \\ r \\ t \\ s \\ z \\ u \\ v    
	\end{bmatrix},
	\bar A = 
	\begin{bmatrix}
	a_{\Ecal}^T & -a_{\Ecal}^T  &  - I_{\Ecal} & 0 &  I_{\Ecal} & 0  & 0 & 0 \\
	a_{\Ical}^T  & -a_{\Ical}^T   &  0 & -I_{\Ical} & 0 & I_{\Ical} & 0 & 0 \\
	I_n & - I_n & 0 & 0 & 0 & 0 & I  & 0  \\
	-I_n& I_n & 0 & 0 & 0 & 0  &  0 & I 
	\end{bmatrix}, 
	\bar b 
	= \begin{bmatrix} -b_{\Ecal} \\ -b_{\Ical} \\  \delta e \\  \delta e  \end{bmatrix}.   
	\end{equation*}
	The initial tableau can be set as 
	

	\begin{table}[!htb]
		\begin{minipage}{.4\linewidth}
			\centering
			\caption{Initial simplex tableau}\label{tab.tableau1}
			\renewcommand\arraystretch{1.2}
			\begin{tabular}{cccc}
				basic  &  nonbasic    &    rhs    &  dual      \\ \hline
				\multicolumn{1}{|c|}{$\bar B$}   &    \multicolumn{1}{|c|}{$\bar N$}  &     \multicolumn{1}{|c|}{$ \bar b$}  &    \multicolumn{1}{|c|}{$I_{N}$}     \\   \hline
				\multicolumn{1}{|c|}{$-\bar c_B$}    &   \multicolumn{1}{|c|}{$  - \bar c_{N}$}   &  \multicolumn{1}{|c|}{$0$} &  \multicolumn{1}{|c|}{0}      \\ \hline
			\end{tabular} 
		\end{minipage}%
		$\ \underrightarrow{\text{pivot}}\ $
		\begin{minipage}{.5\linewidth}
			\centering
			\caption{Simplex tableau}\label{tab.tableau2}
			\renewcommand\arraystretch{1.2}
			\begin{tabular}{cccc}
				basic  &  nonbasic    &    rhs    &  dual      \\  \hline
				\multicolumn{1}{|c|}{$I_{N}$}   &    \multicolumn{1}{|c|}{$\bar B^{-1} \bar N$}  &     \multicolumn{1}{|c|}{$\bar B^{-1} \bar b$}  &    \multicolumn{1}{|c|}{$\bar{B}^{-1}$}     \\ \hline  
				\multicolumn{1}{|c|}{0}    &   \multicolumn{1}{|c|}{$\bar c_B \bar B^{-1} \bar N - \bar c_{N}$}   &  \multicolumn{1}{|c|}{$\bar c_B \bar B^{-1} \bar b$} &  \multicolumn{1}{|c|}{$\lambda$}      \\  \hline
			\end{tabular}
		\end{minipage} 
	\end{table}
	
	
	The benefits of using a simplex for solving such a linear optimization subproblem in our proposed method can be summarized as follows. 
	\begin{itemize}
		\item  The linear optimization subproblem~\eqref{sub.primal.lp} is always feasible and bounded due to the presence  of slack variables and the trust region.
		\item There exists a   basic feasible solution for the tableau  
		\[ ( d_+, d_-, r, s, t, z, u, v) = (0, 0, (b_\Ecal)_+, (b_\Ical)_+,  -(b_\Ecal)_-,  -(b_\Ical)_-, \delta e, \delta e),\]
		so that  tableau can always be trivially  initialized. 
		\item \textcolor{blue}{ After each pivot, the multipliers can also be extracted from the tableau ($\lambda=\bar c_B \bar B^{-1}$ in Table~\ref{tab.tableau2}). We can then project 
			those multipliers onto the dual feasible region to ensure    dual feasibility.  }
		\item The quantities $l(d; \rho)$, $p(\lambda; \rho)$ and $v_i( \langle a_i, d\rangle + b_i)$ used for computing ratios $r_\phi$, $r_c$ and $r_v$ can be easily extracted from the tableau. Moreover, $d$ can be extracted easily from the last column of the tableau and $\lambda$ can also be extract from the last row of the tableau.
		\item After reducing $\rho$ during pivoting, it is only needed to change the row of the objective vector in the tableau. With a new $\rho$, the current iterate remains basic feasible, so that the simplex method can continue with a ``warm-start'' basic feasible initial point. 
	\end{itemize}

	\section{ Numerical experiments} \label{exp}
	
	In this section, we test \texttt{FoNCO} on a collections of nonlinear problems.  

	\subsection{Trust region radius updates}
	We fix the trust region radius to simplify the analysis. However, in practice, dynamically adjusting the radius helps to improve the algorithm efficiency. In our implementation, the radius is adjusted as described below.
	Define ratio 
	\[ \sigma_k :=  \frac{\phi(x^k,\rho_k)-\phi(x^k+d^k,\rho_k)}{\Delta l(d^k;  \rho_k, x^k)}.\]
	The trust radius is updated as 
	\[ \delta_{k+1} = \begin{cases}  
	\min(2\delta_k, \delta_{\max}) & \text{   if   } \sigma_k > \bar \sigma\\
	\max(\delta_k/2, \delta_{\min})   &  \text{ if  }  \sigma_k < \underline{\sigma}\\
	\delta_k  & \text{   otherwise,}
	\end{cases}
	\]
	where $0 < \underline{\sigma} < \bar \sigma < 1$ and $\delta_{\max} > \delta_{\min}$ are prescribed parameters.  
	
	We choose $\bar\sigma > \beta_\alpha$ such that the Armijo line search condition holds naturally true if  $\sigma_k>\bar\sigma$.
	In this case, the back-tracking line search is skipped after solving the subproblem.  
	We do not consider repeatedly  rejecting the trust region radius and re-solving the subproblem if $\sigma_k < \underline\sigma$. 
	If the trust region radius is reduced to be smaller than $\delta_{\min}$, we stop further reducing the trust region radius and continue with line search.  
	In either case,  our theoretical  analysis still holds.  
	
	\subsection{Implementation}

	Our code\footnote{\url{https://github.com/DataCorrupted/FoNCO}.} is a prototype Python implementation using package \texttt{NumPy}. 
	Define the relative KKT error as 
	\begin{equation}\label{relative.kkt}
	\epsilon_{kkt} := \frac{\max(E_{opt}(x^k, \lambda^k, \rho_k), E_c(x^k, \lambda^k))}{\max(1, E_{opt}(x^0, \lambda^0, \rho_0), E_c(x^0, \lambda^0))}.
	\end{equation}
	The algorithm is terminated if $\epsilon_{kkt} < 10^{-4}$ and constraint violation $v(x^k) < 10^{-4}$.  Otherwise,  the algorithm is deemed to fail within the maximum number of iterations. Denote $\texttt{Iter}^{ps}$ as the maximum number of iterations for the subproblem solver.  The relaxation parameter $\gamma_k$ is updated as 
	$\gamma_k = \gamma_0   \theta_\gamma^{k-1}$. 
	The parameter values used in our implementation are listed in the following Table~\ref{tab.para}.

	\begin{table}[h]
		\centering
		\caption{Parameters in \texttt{FoNCO}}
		\label{tab.para}
		\begin{tabular}{c|ccccccccc}\hline
			Parameter  & $\rho_0$   &  $\beta_\alpha$  &  $\beta_v$& $\beta_\phi$ & $\beta_l$                  &  $\gamma_0$  &  $\theta_\gamma$   & $\theta_\alpha$    & $\delta_0$    \\ \hline 
			Value   &   $1$    &  $10^{-4}     $  &  $0.3    $& $0.75$       & $0.135$ &     0.01     &  0.7   &   $0.5$  & 1  \\  \hline\hline
			
			Parameter &      $\bar\sigma$     &  $\underline{\sigma}$  &  $\delta_{\min} $  &    $\delta_{\max}$      & $\texttt{Iter}$ & $\texttt{Iter}^{ps}$  &  & & \\  \hline
			Value &           $0.3$  &   $0.75$  &  $64$       &   $10^{-4}$  &        $1024$   &  $100$ & & & \\  \hline
		\end{tabular}
	\end{table}
	We tested our implementation on 126   Hock ÄìSchittkowski problems  \cite{hock1980test}   of   CUTEr \cite{Gould2015} on a ThinkPad T470 with i5-6700U processor. The detailed performance statistics of \texttt{FoNCO} is provided in Table~\ref{table.numerical}, where column name is explained in Table \ref{table.explain}. 
	
	\begin{table}[h]
		\centering
		\caption{Column Explanation}
		\label{table.explain}
		\begin{tabular}{c|l}\hline
			Problem & The name of the problem \\ \hline
			\# iter  & Number of iterations    \\ \hline
			\# pivot & Total number of pivots     \\ \hline
			\# $f$ & Number of function   evaluations   \\ \hline
			$f(x*)$ & Final objective value     \\ \hline
			$v(x^*)$ & Final    constraint violation    \\ \hline
			KKT   & Final relative KKT error defined in \eqref{relative.kkt}   \\ \hline
			$\rho*$ & Final $\rho$   \\ \hline
			Exit & 1(Success) or $-1$(\texttt{Iter} exceeded) \\ \hline
		\end{tabular}
	\end{table}

	We have the following observations from the experiment.
	\begin{itemize}
		\item Our algorithm solves 113 out of these 126 problems, attaining a success rate   $\approx 89.7\%$. We noticed that some problems
		are sensitive to the selection of trust region radius. For examples, problems HS87, HS93 HS101, HS102 and HS103 failed with initial trust region radius $1$. We re-ran those 5 problems with a smaller initial trust region radius $\delta_{0} = 10^{-4}$. All these 5 problems are solved successfully. We believe the robustness of our proposed algorithm could be improved with a more sophisticated trust region radius updating strategy. 
		\item Simplex method employed in our implementation is very efficient. Figure \ref{fig:pivot-per-iteration} shows the histogram of average number of pivots per iteration for 113 successful problems. We can see that for the majority of the cases, pivot per iteration is less than 5.
		\begin{figure}[ht]
			\begin{center}
				\includegraphics[scale = 0.36]{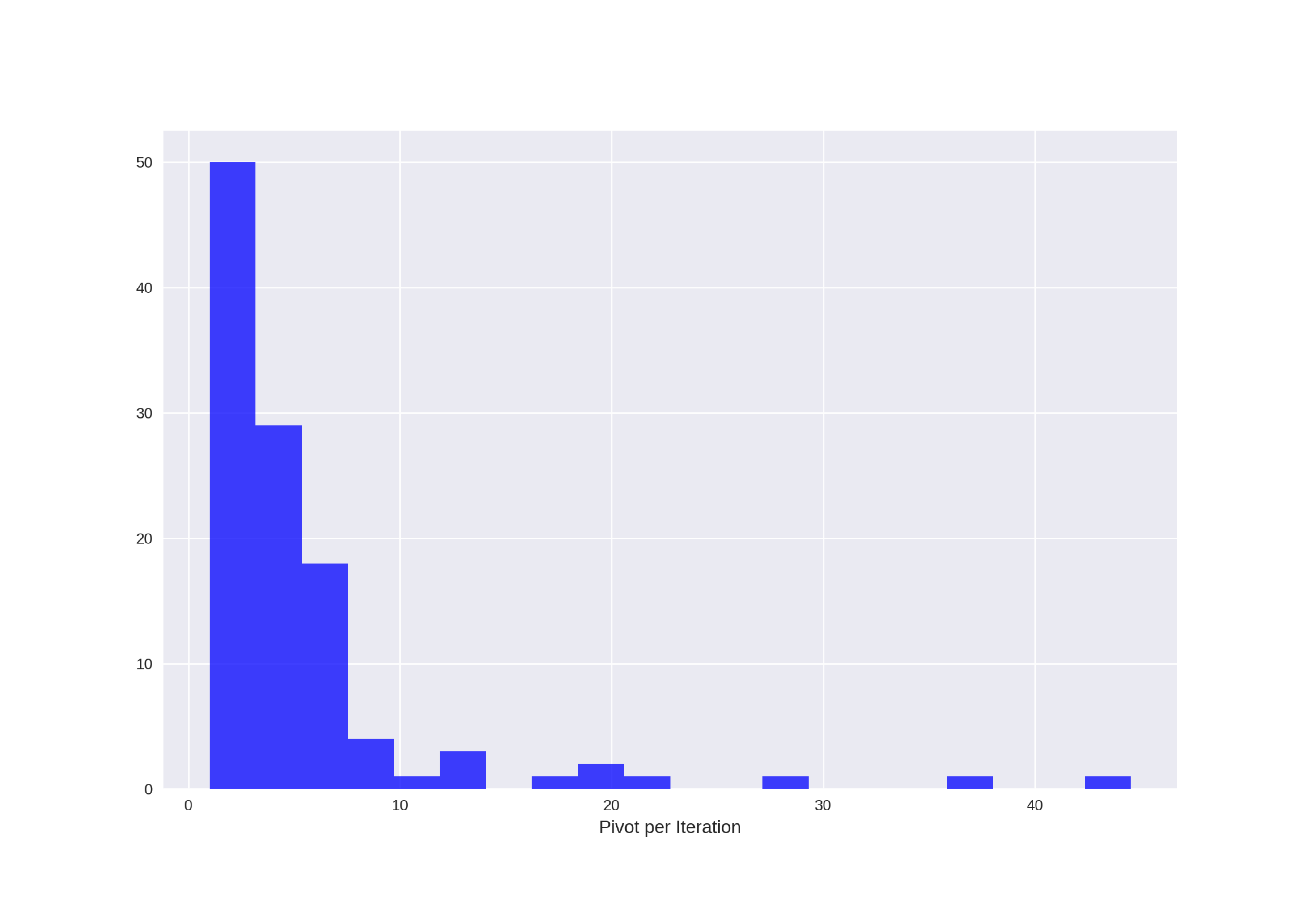}
			\end{center}
			\caption{Pivots per iteration for 113 successful cases.}
			\label{fig:pivot-per-iteration}
		\end{figure}
		\item Compared with second-order methods, SLP may take more outer iterations to compute for a high accuracy solution. However, the lower computational cost of each subproblem might be able to compensate for more outer iterations.
	\end{itemize}

	\textcolor{blue} {
		We also tested our proposed method on a subset of the large-scale CUTEr problems and compared with the exact SLP algorithm. For the exact solver,  we set 
		$\beta_\phi=1$ to enforce the subproblem to be solved accurately.  This    resembles the ``traditional''  exact penalty SLP methods and 
		the classic penalty parameter updating strategy used in \cite{burke1989sequential,han1977globally,han1979exact}  where the 
		penalty parameter is updated after the subproblem is solved. 
		Table \ref{table.comp} shows the test results, where a problem name with subscript $\_ex$ denotes the test results of the 
		exact solver,  $\downarrow$ \% is the improvement percentage on number of pivots by the inexact algorithm compared with the exact version, 
		  \# cons  is the number of constraints, and \# vars  is the number of variables.  We make the following observation from Table \ref{table.comp}, 
			\begin{itemize}
			\item Our proposed method successfully solves the 23 examples, and has 
		fewer number of function evaluations or pivots than the exact SLP method, whereas the exact penalty-SLP method fails at two problems. 
		\item The number of pivots  needed per iteration is still not very large considering the problem sizes. 
		\end{itemize}
		It should be noticed that the results in Table~\ref{table.comp} are preliminary and may be further improved 
		by implementing a powerful large-scale LP solver, e.g., the interior point method. However, this would be out of 
		the scope of this paper's focus.
}

	\section{Conclusion} \label{con}
	In this paper, we have proposed, analyzed, and tested an algorithmic framework of first-order methods for solving nonlinear constrained optimization problems that possesses global convergence guarantees for both feasible and infeasible problems instances.  The worst-case complexity of 
	KKT residuals for feasible and infeasible cases have been studied as well as the local complexity for constraint violation 
	for feasible cases. 
	
	Numerical results demonstrated that the proposed methods work on HS test problems.  We remark, however, the selection of trust region radius and its updating strategy plays a key role in the robustness of the methods. It would be interesting to develop more efficient updating strategies and study how the complexity could be affected by the trust region.

	

\begin{scriptsize}
\begin{longtable} {l||r|r|r||r|r|r|r|r}

\caption{ Numerical Experiment}
\label{table.numerical}\\

\hline
\multicolumn{1}{l}{Problem}    & \multicolumn{1}{c}{\# iter}        & \multicolumn{1}{c}{\# pivot}         &  \multicolumn{1}{c}{  \# $f$}  & 
 \multicolumn{1}{c}{   $f(x^*)$} &  \multicolumn{1}{c}{$v(x^*)$}  &  \multicolumn{1}{c}{ KKT  } &  \multicolumn{1}{c}{ $\rho_*$} &  
 \multicolumn{1}{c}{Exit}
  \\  \hline
\hline
\endfirsthead
\endhead

\multicolumn{9}{c}%
{{  \tablename\ \thetable{} -- continued from previous page}} \\
\hline
\multicolumn{1}{l}{Problem}  & \multicolumn{1}{c}{  \# iter}  & \multicolumn{1}{c}{   \# pivot} &  \multicolumn{1}{c}{  \# $f$}  & 
 \multicolumn{1}{c}{   $f(x^*)$} &  \multicolumn{1}{c}{     $v(x^*)$}  &  \multicolumn{1}{c}{     KKT  } &    \multicolumn{1}{c}{     $\rho_*$} &  \multicolumn{1}{c}{Exit}
  \\  \hline
\hline
\endhead

\hline \multicolumn{6}{c}{{Continued on next page}} \\ \hline
\endfoot

\hline\hline
\endlastfoot

\texttt{      HS1 } &     260 &      504 &       440 &  $7.490597\text{E}{-02}$ &  $0.0\text{E}{+00}$ &  $9.7\text{E}{-05}$ &  $1.6\text{E}{-03}$ & 1 \\
\texttt{     HS10 } &      16 &       25 &        40 & -$1.000002\text{E}{+00}$ &  $4.8\text{E}{-06}$ &  $6.3\text{E}{-06}$ &  $1.0\text{E}{+00}$ & 1 \\
\texttt{    HS100 } &      99 &      545 &       394 &  $6.806299\text{E}{+02}$ &  $9.8\text{E}{-05}$ &  $6.2\text{E}{-05}$ &  $7.0\text{E}{-01}$ & 1 \\
\texttt{ HS100LNP } &     243 &     1653 &      1725 &  $6.806300\text{E}{+02}$ &  $3.2\text{E}{-05}$ &  $9.8\text{E}{-05}$ &  $5.9\text{E}{-01}$ & 1 \\
\texttt{ HS100MOD } &      32 &      129 &        52 &  $6.786796\text{E}{+02}$ &  $1.1\text{E}{-05}$ &  $6.9\text{E}{-05}$ &  $6.2\text{E}{-01}$ & 1 \\
\texttt{    HS101 } &    1025 &     5089 &      8011 &  $3.000007\text{E}{+03}$ &  $7.6\text{E}{-03}$ &  $3.6\text{E}{-13}$ &  $1.0\text{E}{-04}$ & -1 \\
\texttt{    HS102 } &      66 &      311 &        89 &  $1.000775\text{E}{+03}$ &  $4.1\text{E}{-06}$ &  $7.4\text{E}{-05}$ &  $1.0\text{E}{-04}$ & 1 \\
\texttt{    HS103 } &      71 &      382 &       100 &  $7.201554\text{E}{+02}$ &  $6.2\text{E}{-05}$ &  $9.8\text{E}{-05}$ &  $4.3\text{E}{-05}$ & 1 \\
\texttt{    HS104 } &       5 &       27 &         6 &  $4.200000\text{E}{+00}$ &  $8.0\text{E}{-08}$ &  $8.8\text{E}{-16}$ &  $1.0\text{E}{+00}$ & 1 \\
\texttt{    HS105 } &     244 &     1904 &      1045 &  $1.044612\text{E}{+03}$ &  $0.0\text{E}{+00}$ &  $1.4\text{E}{-05}$ &  $2.1\text{E}{-06}$ & 1 \\
\texttt{    HS106 } &    1025 &    15683 &      1117 &  $2.146195\text{E}{+03}$ &  $1.1\text{E}{+00}$ &  $1.1\text{E}{-03}$ &  $7.2\text{E}{-04}$ & -1 \\
\texttt{    HS107 } &    1025 &     7191 &      1078 &  $2.713610\text{E}{+06}$ &  $5.2\text{E}{+00}$ &  $2.3\text{E}{+05}$ &  $2.6\text{E}{-10}$ & -1 \\
\texttt{    HS108 } &      32 &     1198 &       135 & -$8.660254\text{E}{-01}$ &  $2.0\text{E}{-07}$ &  $6.1\text{E}{-05}$ &  $9.0\text{E}{-01}$ & 1 \\
\texttt{    HS109 } &    1025 &     4727 &      1049 &  $0.000000\text{E}{+00}$ &  $5.4\text{E}{+03}$ &  $1.0\text{E}{+00}$ &  $1.0\text{E}{+00}$ & -1 \\
\texttt{     HS11 } &      26 &       60 &        60 & -$8.498486\text{E}{+00}$ &  $7.2\text{E}{-06}$ &  $4.2\text{E}{-05}$ &  $2.6\text{E}{-01}$ & 1 \\
\texttt{    HS110 } &      23 &      170 &        92 & -$4.577848\text{E}{+01}$ &  $0.0\text{E}{+00}$ &  $6.4\text{E}{-05}$ &  $1.0\text{E}{+00}$ & 1 \\
\texttt{    HS111 } &     298 &     3823 &       515 & -$4.776118\text{E}{+01}$ &  $1.1\text{E}{-05}$ &  $8.4\text{E}{-05}$ &  $5.4\text{E}{-02}$ & 1 \\
\texttt{ HS111LNP } &     298 &     3823 &       515 & -$4.776118\text{E}{+01}$ &  $1.1\text{E}{-05}$ &  $8.4\text{E}{-05}$ &  $5.4\text{E}{-02}$ & 1 \\
\texttt{    HS112 } &      54 &      691 &       216 & -$4.776109\text{E}{+01}$ &  $8.1\text{E}{-16}$ &  $6.3\text{E}{-05}$ &  $1.3\text{E}{-03}$ & 1 \\
\texttt{    HS113 } &      34 &      338 &        74 &  $2.430622\text{E}{+01}$ &  $1.5\text{E}{-05}$ &  $8.8\text{E}{-05}$ &  $2.8\text{E}{-01}$ & 1 \\
\texttt{    HS114 } &    1025 &    25291 &      1073 & -$1.636123\text{E}{+03}$ &  $0.0\text{E}{+00}$ &  $1.0\text{E}{+00}$ &  $3.3\text{E}{-47}$ & -1 \\
\texttt{    HS116 } &      10 &       77 &        12 &  $2.500000\text{E}{+02}$ &  $9.0\text{E}{-07}$ &  $3.6\text{E}{-15}$ &  $1.0\text{E}{+00}$ & 1 \\
\texttt{    HS117 } &     352 &     8011 &       466 &  $3.234873\text{E}{+01}$ &  $0.0\text{E}{+00}$ &  $8.2\text{E}{-05}$ &  $3.6\text{E}{-04}$ & 1 \\
\texttt{    HS118 } &      18 &      319 &        19 &  $9.329922\text{E}{+02}$ &  $0.0\text{E}{+00}$ &  $1.9\text{E}{-16}$ &  $7.4\text{E}{-02}$ & 1 \\
\texttt{    HS119 } &      16 &      436 &        20 &  $2.449598\text{E}{+02}$ &  $1.9\text{E}{-15}$ &  $7.3\text{E}{-05}$ &  $1.5\text{E}{-01}$ & 1 \\
\texttt{     HS12 } &      10 &       20 &        15 & -$3.000000\text{E}{+01}$ &  $4.6\text{E}{-10}$ &  $7.7\text{E}{-06}$ &  $1.0\text{E}{+00}$ & 1 \\
\texttt{     HS13 } &      15 &       26 &        16 &  $4.000000\text{E}{+00}$ &  $0.0\text{E}{+00}$ &  $8.5\text{E}{-05}$ &  $1.0\text{E}{-04}$ & 1 \\
\texttt{     HS14 } &       9 &       19 &        16 &  $1.393465\text{E}{+00}$ &  $1.0\text{E}{-14}$ &  $3.0\text{E}{-07}$ &  $4.0\text{E}{-01}$ & 1 \\
\texttt{     HS15 } &      78 &      167 &       132 &  $3.065000\text{E}{+02}$ &  $0.0\text{E}{+00}$ &  $3.2\text{E}{-17}$ &  $1.0\text{E}{-03}$ & 1 \\
\texttt{     HS16 } &      35 &       79 &        77 &  $2.314466\text{E}{+01}$ &  $0.0\text{E}{+00}$ &  $1.9\text{E}{-08}$ &  $2.8\text{E}{-02}$ & 1 \\
\texttt{     HS17 } &      17 &       42 &        38 &  $1.000000\text{E}{+00}$ &  $6.1\text{E}{-21}$ &  $8.4\text{E}{-08}$ &  $4.7\text{E}{-01}$ & 1 \\
\texttt{     HS18 } &      13 &       26 &        34 &  $5.000000\text{E}{+00}$ &  $0.0\text{E}{+00}$ &  $9.4\text{E}{-06}$ &  $1.0\text{E}{+00}$ & 1 \\
\texttt{     HS19 } &     173 &      359 &       189 & -$6.961814\text{E}{+03}$ &  $0.0\text{E}{+00}$ &  $1.9\text{E}{-08}$ &  $9.6\text{E}{-05}$ & 1 \\
\texttt{      HS2 } &      15 &       26 &        54 &  $4.941229\text{E}{+00}$ &  $0.0\text{E}{+00}$ &  $3.1\text{E}{-05}$ &  $9.0\text{E}{-01}$ & 1 \\
\texttt{     HS20 } &      40 &       88 &       314 &  $4.019873\text{E}{+01}$ &  $0.0\text{E}{+00}$ &  $2.3\text{E}{-07}$ &  $8.1\text{E}{-03}$ & 1 \\
\texttt{     HS21 } &       5 &        7 &         6 & -$9.996000\text{E}{+01}$ &  $0.0\text{E}{+00}$ &  $0.0\text{E}{+00}$ &  $1.0\text{E}{+00}$ & 1 \\
\texttt{  HS21MOD } &      15 &       23 &        19 & -$9.596000\text{E}{+01}$ &  $0.0\text{E}{+00}$ &  $0.0\text{E}{+00}$ &  $1.7\text{E}{-01}$ & 1 \\
\texttt{     HS22 } &       5 &       10 &         6 &  $1.000000\text{E}{+00}$ &  $0.0\text{E}{+00}$ &  $3.7\text{E}{-09}$ &  $1.0\text{E}{+00}$ & 1 \\
\texttt{     HS23 } &      18 &       37 &       225 &  $2.000000\text{E}{+00}$ &  $0.0\text{E}{+00}$ &  $1.0\text{E}{-07}$ &  $3.1\text{E}{-01}$ & 1 \\
\texttt{     HS24 } &       3 &        6 &        38 & -$1.000000\text{E}{+00}$ &  $0.0\text{E}{+00}$ &  $1.9\text{E}{-16}$ &  $1.0\text{E}{+00}$ & 1 \\
\texttt{     HS25 } &       1 &        0 &         1 &  $3.283500\text{E}{+01}$ &  $0.0\text{E}{+00}$ &  $1.9\text{E}{-08}$ &  $1.0\text{E}{+00}$ & 1 \\
\texttt{     HS26 } &      86 &      254 &       415 &  $8.505871\text{E}{-06}$ &  $5.4\text{E}{-06}$ &  $9.8\text{E}{-05}$ &  $7.2\text{E}{-01}$ & 1 \\
\texttt{    HS268 } &     249 &     1259 &       827 &  $3.075321\text{E}{+00}$ &  $0.0\text{E}{+00}$ &  $9.6\text{E}{-05}$ &  $1.1\text{E}{-04}$ & 1 \\
\texttt{     HS27 } &      16 &       33 &        22 &  $4.000000\text{E}{-02}$ &  $0.0\text{E}{+00}$ &  $0.0\text{E}{+00}$ &  $3.1\text{E}{-01}$ & 1 \\
\texttt{     HS28 } &       6 &       15 &         9 &  $0.000000\text{E}{+00}$ &  $0.0\text{E}{+00}$ &  $0.0\text{E}{+00}$ &  $9.0\text{E}{-01}$ & 1 \\
\texttt{     HS29 } &     360 &     1230 &      2659 & -$2.262742\text{E}{+01}$ &  $8.2\text{E}{-06}$ &  $7.7\text{E}{-05}$ &  $1.0\text{E}{+00}$ & 1 \\
\texttt{      HS3 } &     518 &     1019 &       519 &  $2.490010\text{E}{-04}$ &  $0.0\text{E}{+00}$ &  $9.9\text{E}{-05}$ &  $1.0\text{E}{-04}$ & 1 \\
\texttt{     HS30 } &       7 &       20 &         8 &  $1.000061\text{E}{+00}$ &  $0.0\text{E}{+00}$ &  $3.0\text{E}{-05}$ &  $1.0\text{E}{+00}$ & 1 \\
\texttt{     HS31 } &     129 &      315 &       599 &  $5.999992\text{E}{+00}$ &  $1.3\text{E}{-06}$ &  $5.2\text{E}{-05}$ &  $1.0\text{E}{-01}$ & 1 \\
\texttt{     HS32 } &      11 &       47 &        17 &  $1.000000\text{E}{+00}$ &  $0.0\text{E}{+00}$ &  $9.2\text{E}{-17}$ &  $2.0\text{E}{-01}$ & 1 \\
\texttt{     HS33 } &     291 &      589 &       292 & -$4.000000\text{E}{+00}$ &  $0.0\text{E}{+00}$ &  $2.6\text{E}{-07}$ &  $5.8\text{E}{-09}$ & 1 \\
\texttt{     HS34 } &      11 &       50 &        12 & -$8.340324\text{E}{-01}$ &  $4.5\text{E}{-10}$ &  $4.2\text{E}{-06}$ &  $1.0\text{E}{+00}$ & 1 \\
\texttt{     HS35 } &      18 &       42 &        87 &  $1.111111\text{E}{-01}$ &  $0.0\text{E}{+00}$ &  $1.3\text{E}{-05}$ &  $6.5\text{E}{-01}$ & 1 \\
\texttt{    HS35I } &      18 &       42 &        87 &  $1.111111\text{E}{-01}$ &  $0.0\text{E}{+00}$ &  $1.3\text{E}{-05}$ &  $6.5\text{E}{-01}$ & 1 \\
\texttt{  HS35MOD } &       2 &        5 &         3 &  $2.500000\text{E}{-01}$ &  $0.0\text{E}{+00}$ &  $0.0\text{E}{+00}$ &  $1.0\text{E}{+00}$ & 1 \\
\texttt{     HS36 } &     376 &     1093 &       377 & -$3.300000\text{E}{+03}$ &  $0.0\text{E}{+00}$ &  $2.8\text{E}{-16}$ &  $2.0\text{E}{-08}$ & 1 \\
\texttt{     HS37 } &     389 &     1136 &       408 & -$3.456000\text{E}{+03}$ &  $0.0\text{E}{+00}$ &  $1.9\text{E}{-05}$ &  $1.5\text{E}{-08}$ & 1 \\
\texttt{     HS38 } &      44 &      139 &        78 &  $1.070841\text{E}{-02}$ &  $0.0\text{E}{+00}$ &  $4.8\text{E}{-05}$ &  $7.8\text{E}{-03}$ & 1 \\
\texttt{     HS39 } &      28 &       83 &        46 & -$1.000044\text{E}{+00}$ &  $4.3\text{E}{-05}$ &  $8.5\text{E}{-06}$ &  $8.1\text{E}{-01}$ & 1 \\
\texttt{   HS3MOD } &      28 &       33 &        29 &  $0.000000\text{E}{+00}$ &  $0.0\text{E}{+00}$ &  $0.0\text{E}{+00}$ &  $5.9\text{E}{-01}$ & 1 \\
\texttt{      HS4 } &       9 &       23 &        16 &  $2.666667\text{E}{+00}$ &  $0.0\text{E}{+00}$ &  $0.0\text{E}{+00}$ &  $1.8\text{E}{-01}$ & 1 \\
\texttt{     HS40 } &      41 &     1826 &       174 & -$2.500000\text{E}{-01}$ &  $1.6\text{E}{-09}$ &  $8.6\text{E}{-05}$ &  $1.5\text{E}{-01}$ & 1 \\
\texttt{     HS41 } &     176 &      636 &       215 &  $1.925926\text{E}{+00}$ &  $0.0\text{E}{+00}$ &  $5.8\text{E}{-05}$ &  $1.0\text{E}{-07}$ & 1 \\
\texttt{     HS42 } &      20 &       62 &        50 &  $1.385786\text{E}{+01}$ &  $1.6\text{E}{-07}$ &  $5.1\text{E}{-05}$ &  $3.1\text{E}{-01}$ & 1 \\
\texttt{     HS43 } &      24 &       93 &        48 & -$4.400000\text{E}{+01}$ &  $1.5\text{E}{-08}$ &  $5.6\text{E}{-05}$ &  $4.5\text{E}{-01}$ & 1 \\
\texttt{     HS44 } &      98 &      398 &        99 & -$1.500000\text{E}{+01}$ &  $0.0\text{E}{+00}$ &  $4.4\text{E}{-16}$ &  $1.0\text{E}{-04}$ & 1 \\
\texttt{  HS44NEW } &      91 &      380 &        92 & -$1.500000\text{E}{+01}$ &  $0.0\text{E}{+00}$ &  $0.0\text{E}{+00}$ &  $1.1\text{E}{-04}$ & 1 \\
\texttt{     HS45 } &       4 &       17 &         5 &  $1.000000\text{E}{+00}$ &  $0.0\text{E}{+00}$ &  $0.0\text{E}{+00}$ &  $1.0\text{E}{+00}$ & 1 \\
\texttt{     HS46 } &      90 &      470 &       248 &  $1.987922\text{E}{-05}$ &  $4.8\text{E}{-06}$ &  $8.8\text{E}{-05}$ &  $1.0\text{E}{+00}$ & 1 \\
\texttt{     HS47 } &      41 &      247 &        80 &  $2.842654\text{E}{-05}$ &  $3.6\text{E}{-05}$ &  $8.2\text{E}{-05}$ &  $8.1\text{E}{-01}$ & 1 \\
\texttt{     HS48 } &       4 &       16 &         6 &  $1.109336\text{E}{-31}$ &  $0.0\text{E}{+00}$ &  $3.7\text{E}{-17}$ &  $1.0\text{E}{+00}$ & 1 \\
\texttt{     HS49 } &      20 &      116 &        27 &  $4.978240\text{E}{-03}$ &  $0.0\text{E}{+00}$ &  $8.1\text{E}{-05}$ &  $2.8\text{E}{-01}$ & 1 \\
\texttt{      HS5 } &       8 &       12 &        39 & -$1.913223\text{E}{+00}$ &  $0.0\text{E}{+00}$ &  $3.8\text{E}{-05}$ &  $1.0\text{E}{+00}$ & 1 \\
\texttt{     HS50 } &      10 &       53 &        17 &  $1.232595\text{E}{-32}$ &  $4.4\text{E}{-16}$ &  $7.0\text{E}{-19}$ &  $1.8\text{E}{-01}$ & 1 \\
\texttt{     HS51 } &      13 &       59 &        23 &  $2.170139\text{E}{-08}$ &  $0.0\text{E}{+00}$ &  $8.3\text{E}{-05}$ &  $7.2\text{E}{-01}$ & 1 \\
\texttt{     HS52 } &      37 &      213 &       109 &  $5.326649\text{E}{+00}$ &  $1.6\text{E}{-16}$ &  $9.0\text{E}{-05}$ &  $5.0\text{E}{-02}$ & 1 \\
\texttt{     HS53 } &      22 &      110 &        45 &  $4.093023\text{E}{+00}$ &  $1.1\text{E}{-16}$ &  $6.4\text{E}{-05}$ &  $1.2\text{E}{-01}$ & 1 \\
\texttt{     HS54 } &       5 &        8 &        37 & -$1.539517\text{E}{-01}$ &  $0.0\text{E}{+00}$ &  $1.4\text{E}{-06}$ &  $1.0\text{E}{+00}$ & 1 \\
\texttt{     HS55 } &       1 &        7 &         2 &  $6.666667\text{E}{+00}$ &  $1.1\text{E}{-16}$ &  $3.7\text{E}{-17}$ &  $1.0\text{E}{+00}$ & 1 \\
\texttt{     HS56 } &       2 &        5 &         3 & -$1.000000\text{E}{+00}$ &  $1.2\text{E}{-15}$ &  $2.7\text{E}{-05}$ &  $1.0\text{E}{-04}$ & 1 \\
\texttt{     HS57 } &       7 &        7 &        61 &  $3.064631\text{E}{-02}$ &  $0.0\text{E}{+00}$ &  $9.6\text{E}{-05}$ &  $1.0\text{E}{+00}$ & 1 \\
\texttt{     HS59 } &       9 &       13 &        10 &  $3.012922\text{E}{+01}$ &  $1.4\text{E}{-07}$ &  $9.5\text{E}{-06}$ &  $1.0\text{E}{+00}$ & 1 \\
\texttt{      HS6 } &     314 &      789 &      1710 &  $1.503772\text{E}{-13}$ &  $7.7\text{E}{-06}$ &  $8.5\text{E}{-05}$ &  $9.0\text{E}{-01}$ & 1 \\
\texttt{     HS60 } &     117 &      397 &       834 &  $3.256821\text{E}{-02}$ &  $2.0\text{E}{-09}$ &  $9.8\text{E}{-05}$ &  $1.0\text{E}{+00}$ & 1 \\
\texttt{     HS61 } &      18 &       60 &        34 & -$1.436462\text{E}{+02}$ &  $5.3\text{E}{-05}$ &  $7.2\text{E}{-05}$ &  $4.9\text{E}{-01}$ & 1 \\
\texttt{     HS62 } &     258 &      745 &      1395 & -$2.627251\text{E}{+04}$ &  $1.6\text{E}{-16}$ &  $8.4\text{E}{-05}$ &  $3.1\text{E}{-05}$ & 1 \\
\texttt{     HS63 } &      16 &       54 &        46 &  $9.617152\text{E}{+02}$ &  $2.7\text{E}{-06}$ &  $1.7\text{E}{-05}$ &  $5.5\text{E}{-01}$ & 1 \\
\texttt{     HS64 } &      43 &      101 &        51 &  $6.299779\text{E}{+03}$ &  $3.4\text{E}{-05}$ &  $1.8\text{E}{-06}$ &  $4.9\text{E}{-02}$ & 1 \\
\texttt{     HS65 } &      23 &       59 &        30 &  $9.535289\text{E}{-01}$ &  $1.7\text{E}{-10}$ &  $4.3\text{E}{-05}$ &  $1.0\text{E}{+00}$ & 1 \\
\texttt{     HS66 } &      23 &       61 &        84 &  $5.181632\text{E}{-01}$ &  $2.4\text{E}{-07}$ &  $7.0\text{E}{-05}$ &  $5.3\text{E}{-01}$ & 1 \\
\texttt{     HS67 } &    1025 &     3071 &      1034 & -$9.162074\text{E}{+02}$ &  $0.0\text{E}{+00}$ &  $7.3\text{E}{-02}$ &  $1.5\text{E}{-04}$ & -1 \\
\texttt{     HS68 } &      38 &      138 &       389 &  $2.400000\text{E}{-05}$ &  $0.0\text{E}{+00}$ &  $4.2\text{E}{-14}$ &  $1.7\text{E}{-03}$ & 1 \\
\texttt{     HS69 } &      62 &      281 &       505 & -$9.280357\text{E}{+02}$ &  $1.1\text{E}{-10}$ &  $4.4\text{E}{-05}$ &  $2.8\text{E}{-08}$ & 1 \\
\texttt{      HS7 } &      17 &       32 &        51 & -$1.732051\text{E}{+00}$ &  $3.0\text{E}{-09}$ &  $4.7\text{E}{-05}$ &  $9.0\text{E}{-01}$ & 1 \\
\texttt{     HS70 } &    1025 &     4056 &      2799 &  $1.866660\text{E}{-02}$ &  $0.0\text{E}{+00}$ &  $1.7\text{E}{-02}$ &  $9.6\text{E}{-03}$ & -1 \\
\texttt{     HS71 } &      28 &      529 &       237 &  $1.701402\text{E}{+01}$ &  $3.9\text{E}{-08}$ &  $9.1\text{E}{-05}$ &  $3.9\text{E}{-01}$ & 1 \\
\texttt{     HS72 } &      99 &      333 &       104 &  $7.276793\text{E}{+02}$ &  $4.4\text{E}{-09}$ &  $7.0\text{E}{-05}$ &  $1.7\text{E}{-05}$ & 1 \\
\texttt{     HS73 } &      31 &      272 &        33 &  $2.989438\text{E}{+01}$ &  $1.5\text{E}{-07}$ &  $2.3\text{E}{-06}$ &  $3.7\text{E}{-02}$ & 1 \\
\texttt{     HS74 } &      51 &      200 &        64 &  $5.126498\text{E}{+03}$ &  $1.4\text{E}{-06}$ &  $4.6\text{E}{-05}$ &  $1.3\text{E}{-01}$ & 1 \\
\texttt{     HS75 } &    1025 &     6031 &      1042 &  $5.127004\text{E}{+03}$ &  $3.1\text{E}{-02}$ &  $4.9\text{E}{-05}$ &  $2.4\text{E}{-03}$ & -1 \\
\texttt{     HS76 } &      23 &       97 &        68 & -$4.681818\text{E}{+00}$ &  $0.0\text{E}{+00}$ &  $2.6\text{E}{-05}$ &  $3.9\text{E}{-01}$ & 1 \\
\texttt{    HS76I } &      23 &       97 &        68 & -$4.681818\text{E}{+00}$ &  $0.0\text{E}{+00}$ &  $2.6\text{E}{-05}$ &  $3.9\text{E}{-01}$ & 1 \\
\texttt{     HS77 } &      49 &      245 &       141 &  $2.415043\text{E}{-01}$ &  $1.8\text{E}{-05}$ &  $7.2\text{E}{-05}$ &  $1.0\text{E}{+00}$ & 1 \\
\texttt{     HS78 } &      19 &      111 &        73 & -$2.919700\text{E}{+00}$ &  $2.7\text{E}{-07}$ &  $4.7\text{E}{-05}$ &  $8.1\text{E}{-01}$ & 1 \\
\texttt{     HS79 } &     226 &     1653 &      1181 &  $7.877686\text{E}{-02}$ &  $2.9\text{E}{-06}$ &  $8.7\text{E}{-05}$ &  $1.0\text{E}{+00}$ & 1 \\
\texttt{      HS8 } &       5 &       10 &         6 & -$1.000000\text{E}{+00}$ &  $6.4\text{E}{-07}$ &  $0.0\text{E}{+00}$ &  $1.0\text{E}{+00}$ & 1 \\
\texttt{     HS80 } &     136 &      864 &      1662 &  $5.394985\text{E}{-02}$ &  $2.6\text{E}{-09}$ &  $3.7\text{E}{-05}$ &  $1.0\text{E}{+00}$ & 1 \\
\texttt{     HS81 } &      69 &      497 &       294 &  $5.394986\text{E}{-02}$ &  $2.0\text{E}{-05}$ &  $9.5\text{E}{-05}$ &  $8.1\text{E}{-01}$ & 1 \\
\texttt{     HS83 } &    1025 &    99668 &     34494 & -$2.539096\text{E}{+04}$ &  $2.5\text{E}{+00}$ &  $1.8\text{E}{+45}$ &  $9.1\text{E}{-49}$ & -1 \\
\texttt{     HS84 } &    1025 &   100970 &      1083 & -$2.325944\text{E}{+09}$ &  $3.8\text{E}{+05}$ &  $1.7\text{E}{+45}$ &  $2.0\text{E}{-47}$ & -1 \\
\texttt{     HS85 } &    1025 &     5148 &      3315 &  $4.374488\text{E}{+01}$ &  $9.3\text{E}{+06}$ &  $1.0\text{E}{+00}$ &  $5.7\text{E}{-04}$ & -1 \\
\texttt{     HS86 } &      25 &      146 &        64 & -$3.234868\text{E}{+01}$ &  $2.2\text{E}{-16}$ &  $9.8\text{E}{-05}$ &  $5.6\text{E}{-02}$ & 1 \\
\texttt{     HS87 } &      15 &       52 &        16 &  $8.997184\text{E}{+03}$ &  $1.6\text{E}{-09}$ &  $2.3\text{E}{-07}$ &  $1.0\text{E}{-04}$ & 1 \\
\texttt{     HS88 } &      59 &       98 &       163 &  $1.349683\text{E}{+00}$ &  $1.2\text{E}{-05}$ &  $8.3\text{E}{-05}$ &  $1.0\text{E}{-04}$ & 1 \\
\texttt{     HS89 } &     187 &      545 &       778 &  $1.357072\text{E}{+00}$ &  $5.4\text{E}{-06}$ &  $9.1\text{E}{-05}$ &  $1.0\text{E}{-04}$ & 1 \\
\texttt{      HS9 } &       4 &       10 &         6 & -$5.000000\text{E}{-01}$ &  $0.0\text{E}{+00}$ &  $8.5\text{E}{-09}$ &  $1.0\text{E}{+00}$ & 1 \\
\texttt{     HS90 } &     679 &     3485 &      3923 &  $1.385570\text{E}{+00}$ &  $5.0\text{E}{-06}$ &  $9.9\text{E}{-05}$ &  $1.0\text{E}{-04}$ & 1 \\
\texttt{     HS91 } &     525 &     2535 &      2481 &  $1.357178\text{E}{+00}$ &  $5.2\text{E}{-06}$ &  $7.9\text{E}{-05}$ &  $1.0\text{E}{-04}$ & 1 \\
\texttt{     HS92 } &     421 &     2645 &      1628 &  $1.349981\text{E}{+00}$ &  $1.2\text{E}{-05}$ &  $7.5\text{E}{-05}$ &  $1.0\text{E}{-04}$ & 1 \\
\texttt{     HS93 } &    1025 &     7846 &      6127 &  $1.353296\text{E}{+02}$ &  $1.9\text{E}{-06}$ &  $3.2\text{E}{-03}$ &  $2.2\text{E}{-05}$ & -1 \\
\texttt{     HS95 } &      33 &      206 &        62 &  $1.561953\text{E}{-02}$ &  $0.0\text{E}{+00}$ &  $2.7\text{E}{-17}$ &  $1.0\text{E}{-02}$ & 1 \\
\texttt{     HS96 } &      30 &      206 &        64 &  $1.561953\text{E}{-02}$ &  $0.0\text{E}{+00}$ &  $1.0\text{E}{-16}$ &  $1.0\text{E}{-02}$ & 1 \\
\texttt{     HS97 } &      53 &     1007 &        81 &  $4.071246\text{E}{+00}$ &  $0.0\text{E}{+00}$ &  $7.2\text{E}{-15}$ &  $7.0\text{E}{-04}$ & 1 \\
\texttt{     HS98 } &      36 &      303 &       102 &  $3.135809\text{E}{+00}$ &  $0.0\text{E}{+00}$ &  $1.8\text{E}{-15}$ &  $1.1\text{E}{-03}$ & 1 \\
\texttt{     HS99 } &      57 &      348 &       462 & -$8.310799\text{E}{+08}$ &  $1.0\text{E}{-11}$ &  $6.2\text{E}{-05}$ &  $3.8\text{E}{-01}$ & 1 \\
\texttt{  HS99EXP } &    1025 &    32423 &      1025 &  $0.000000\text{E}{+00}$ &  $5.2\text{E}{+03}$ &  $1.2\text{E}{+00}$ &  $1.0\text{E}{+00}$ & -1 \\
  \end{longtable}

\end{scriptsize}

	
	%
	%
	%

\begin{scriptsize}
\setlength{\tabcolsep}{2pt}
{\tiny
	\begin{longtable}{l|r|r|r|r|r|r|r|r|r|r|r}
		\caption{ Comparison with exact algorithm}
		\label{table.comp}\\
		
		\hline
		\multicolumn{1}{l}{Problem}&   \multicolumn{1}{l}{\# cons}    & \multicolumn{1}{c}{\# vars}         & \multicolumn{1}{c}{\# iter}        & \multicolumn{1}{c}{\# pivot}  & \multicolumn{1}{c}{$\downarrow$ \% }      &  \multicolumn{1}{c}{  \# $f$}  & 
		\multicolumn{1}{c}{   $f(x^*)$} &  \multicolumn{1}{c}{$v(x^*)$}  &  \multicolumn{1}{c}{ KKT  } &  \multicolumn{1}{c}{ $\rho_*$} &
		\multicolumn{1}{c}{Exit}
		\\  \hline
		\hline
		\endfirsthead
		\endhead
		
		\hline\hline
		\endlastfoot
		ARGTRIG &  200&200&     3 &      592 &  1.1\%&       4 &   0.0000E$+$00 &  2.6E$-$06 &  0.0E$+$00 &  1.0E$+$00 &1\\
		ARGTRIG\_ex & 200&200&     3 &      599 &    &     4 &      0.0000E$+$00 &  9.3E$-$08 &  0.0E$+$00 &  1.0E$+$00 &1\\
		\hline
		GMNCASE1 &   175&300&   38 &    13049 &    30.8\%&   142 &     2.6779E$-$01 &  2.0E$-$06 &  9.7E$-$05 &  2.1E$-$01&1\\
		GMNCASE1\_ex &   175&300&   41 &    18857 &     &  190 &  2.6779E$-$01 &  4.0E$-$15 &  6.8E$-$05 &  1.1E$-$01&1\\
		\hline
		GMNCASE2 &   175&1050&  172 &    80617 &   51.0\%&   1229 &    -9.9445E$-$01 &  4.2E$-$15 &  1.0E$-$04 &  3.0E$-$03&1\\
		GMNCASE2\_ex &  175&1050& 335 &   164478 &     & 3179 &  -9.9443E$-$01 &  1.6E$-$06 &  9.9E$-$05 &  1.5E$-$01&1\\
		\hline
		DTOC6 &   100&201&   94 &    23430 &   57.1\%&    258 &    7.2798E$+$02 &  1.2E$-$05 &  4.8E$-$04 &  4.9E$-$03 &1\\
		DTOC6\_ex &  100&201&   190 &    54667 &    &    902 &    7.2798E$+$02 &  6.9E$-$06 &  4.7E$-$04 &  3.9E$-$03&1 \\
		\hline
		EIGMAXA &   101&101&    3 &      305 &   $-$1.0\%&  4 & $-$1.0000E$+$00 &  0.0E$+$00 &  0.0E$+$00 &  1.0E$+$00 &1\\
		EIGMAXA\_ex &  101&101&    3 &      302 &     &  38 &  $-$1.0000E$+$00 &  2.8E$-$17 &  4.5E$-$28 &  1.0E$+$00&1 \\
		\hline
		EIGMINA &   101&101&    6 &      986 &   $-$27.7\%&     41 & 1.0000E$+$00 & 1.9E$-$17 &  9.2E$-$29 &  6.2E$-$01 &1\\
		EIGMINA\_ex &  101&101&    7 &      713 &     &     8 &  1.0000E$+$00 &  4.5E$-$14 &  1.7E$-$07 &  5.7E$-$01&1 \\
		\hline
		LUKVLE3 &   2&100&   48 &      879 &  46.7\%&     100 &  2.7587E$+$01 &  1.8E$-$05 &  8.6E$-$05 &  4.2E$-$02 &1\\
		LUKVLE3\_ex &  2&100&    54 &     1650 &   &      91 &     2.7586E$+$01 &  5.7E$-$05 &  5.7E$-$05 &  3.8E$-$02&1 \\
		\hline
		LUKVLE5 &    96&102&  95 &    26275 &   59.8\%&   188 &   2.6393E$+$00 &  7.2E$-$05 &  1.0E$-$04 &  1.0E$+$00 &1\\
		LUKVLE5\_ex & 96&102&    143 &    65431 &    &    350 &   2.6393E$+$00 &  5.6E$-$05 &  8.8E$-$05 &  5.9E$-$01 &1\\
		\hline
		LUKVLE6 &   49&99&   20 &     1442 &     50.1\%&   25 &   6.0399E$+$03 &  9.1E$-$04 &  4.0E$-$05 &  1.0E$+$00 &1\\
		LUKVLE6\_ex & 49&99&     20 &     2887 &    &     30 &   6.0376E$+$03 &  7.4E$-$04 &  4.2E$-$06 &  6.6E$-$01 &1\\
		\hline
		LUKVLE7 &  4&100&   319 &    11080 &    78.3\%&   1777 &    $-$2.5945E$+$01 &  1.7E$-$05 &  1.0E$-$04 &  5.2E$-$02 &1\\
		LUKVLE7\_ex & 4&100&    513 &    51103 &    &   3252 &    $-$2.5944E$+$01 &  1.3E$-$05 &  2.9E$-$04 &  2.8E$-$02&$-$1 \\
		\hline
		LUKVLE8 &   98&100&   157 &    15970 &    $-$15.6\%&   244 &  1.0587E$+$03 &  1.7E$-$07 &  6.5E$-$05 &  2.7E$-$04 &1\\
		LUKVLE8\_ex & 98&100&     128 &    18920 &    &    163 &  1.0587E$+$03 &  1.5E$-$07 &  3.1E$-$05 &  2.5E$-$04 &1\\
		\hline
		LUKVLI6 &   49&99&   23 &     1680 &     42.6\%&   29 &   6.0386E$+$03 &  2.8E$-$04 &  2.7E$-$05 &  1.0E$+$00 &1\\
		LUKVLI6\_ex & 49&99&     20 &     2927 &    &     30 &   6.0376E$+$03 &  7.4E$-$04 &  4.2E$-$06 &  6.6E$-$01 &1\\
		\hline
		LUKVLI8 &  98&100&   145 &    16372 &   55.7\%&    259 &  1.0587E$+$04 &  1.1E$-$07 &  7.2E$-$05 &  2.9E$-$04 &1\\
		LUKVLI8\_ex & 98&100&    255 &    36952 &   &     715 &  1.0587E$+$04 &  4.2E$-$08 &  7.1E$-$05 &  2.8E$-$04 &1\\
		\hline
		NONSCOMPL &  0&500&    24 &     1061 &   27.8\%&     53 &  2.7000E$-$05 &  0.0E$+$00 &  5.6E$-$05 &  1.0E$+$00 &1\\
		NONSCOMPL\_ex & 0&500&    26 &     1470 &    &     70 &   2.3100E$-$05 &  0.0E$+$00 &  7.4E$-$05 &  1.0E$+$00 &1\\
		\hline
		NONSCOMPNE &  100&100&     7 &      695 &  $-$9.1\%&       8 &  0.0000E$+$00 &  8.1E$-$07 &  1.8E$-$11 &  1.0E$+$00 &1\\
		NONSCOMPNE\_ex & 100&100&     6 &      643 &   &       7 & 0.0000E$+$00 &  4.2E$-$08 &  1.2E$-$06 &  1.0E$+$00 &1\\
		\hline
		ORTHREGC &  50&105&    27 &     3712 & 92.1\%&       53 &   1.9756E$+$00 &  9.9E$-$04 &  9.8E$-$04 &  1.0E$+$00 &1\\
		ORTHREGC\_ex & 50&105&   205 &    46737 &  &     1231 &   1.9755E$+$00 &  8.4E$-$05 &  7.4E$-$04 &  1.0E$+$00 &1\\
		\hline
		ORTHREGD &     50&103&   82 &    12336 &   85.6\%&    173 &     1.5590E$+$01 &  9.1E$-$04 &  4.2E$-$06 &  1.0E$+$00 &1\\
		ORTHREGD\_ex & 50&103&    513 &    85794 &  &     3623 &     3.9532E$+$01 &  8.7E$-$04 &  3.3E$-$01 &  1.5E$-$02 &$-$1\\
		\hline
		ORTHREGF &     49&152&   39 &     5406 &  60.8\%&      66 &   1.3152E$+$00  &  3.9E$-$04 &  9.0E$-$04 &  1.0E$+$00 &1\\
		ORTHREGF\_ex & 49&152&     48 &    13792 &  &      155 &      1.3150E$+$00  &  4.4E$-$04 &  9.7E$-$04 &  1.0E$+$00 &1\\
		\hline
		OSCIGRNE &   0&1000&    6 &      752 &   0.0\%&      7 &  0.0000E$+$00 &  2.4E$-$08 &  8.3E$-$25 &  1.0E$+$00&1 \\
		OSCIGRNE\_ex &  0&1000&    6 &      752 &    &      7 &  0.0000E$+$00 &  2.4E$-$08 &  8.3E$-$25 &  1.0E$+$00 &1\\
		\hline
		PENLT1NE &  101&100&    40 &      980 &  52.3\%&      62 &0.0000E$+$00 &  9.9E$-$04 &  1.6E$-$08 &  1.0E$+$00 &1\\
		PENLT1NE\_ex & 101&100&    36 &     2054 &   &      44 &  0.0000E$+$00 &  9.9E$-$04 &  9.6E$-$09 &  1.0E$+$00 &1\\
		\hline
		PRIMAL1 &  85&325&    42 &    21042 &  0.0\%&     232 &   $-$3.5010E$-$02 &  4.5E$-$14 &  1.4E$-$04 &  1.0E$+$00 &1\\
		PRIMAL1\_ex & 85&325&    42 &    21042 &   &     232 &     $-$3.5010E$-$02 &  4.5E$-$14 &  1.4E$-$04 &  1.0E$+$00 &1\\
		\hline
		SOSQP1 &  101&200&     3 &      373 &  7.2\%&        4 & 0.0000E$+$00 &  0.0E$+$00 &  0.0E$+$00 &  1.0E$+$00 &1\\
		SOSQP1\_ex & 101&200&     3 &      402 &   &       4 &  0.0000E$+$00 &  0.0E$+$00 &  0.0E$+$00 &  1.0E$+$00 &1\\
		\hline
		STCQP2 &   101&200&   74 &    13022 &    11.0\%&   317 &  1.4294E$+$03 &  0.0E$+$00 &  4.7E$-$04 &  2.3E$-$07 &1\\
		STCQP2\_ex &  101&200&   77 &    14624 &    &   111 &  1.4294E$+$03 &  0.0E$+$00 &  4.0E$-$04 &  1.1E$-$03 &1\\
	\end{longtable}
}
\end{scriptsize}

	
	

	\section{Appendix}
	
	\noindent\textbf{Calculation  of the dual subproblem} 
	
	In this appendix, we show how the dual subproblem \eqref{dual prob} is derived.  
	For this purpose, we use the shorthand defined by \eqref{eq.shorthand} and  define the convex sets: 
	\[ \begin{aligned}
	C_i = &\ \{ d \mid  \langle a_i /\|a_i\|_2 , d \rangle + b_i/\|a_i\|_2  = 0\}, \ i\in \Ecal\\
	C_i =& \ \{ d \mid  \langle a_i/\|a_i\|_2 , d \rangle + b_i /\|a_i\|_2 \le 0\}, \  i\in\Ical. 
	\end{aligned}\]
	One finds that 
	$\delta^*(u_i \mid C_i) < +\infty$ if and only if 
	\[  \langle u_i, a_i  \rangle = \pm \| u_i \|_2 \| a_i \|_2, \  i\in\Ecal, \quad \text{and}\quad  \langle u_i, a_i  \rangle =   \| u_i \|_2 \| a_i \|_2, \  i\in\Ical.\]
	In this case, it must be true that 
	\begin{equation}\label{comp dual}
	\begin{aligned}
	u_i & = \lambda_i a_i /\|a_i\|_2  \ \text{for some }  \lambda_i \in \mathbb{R}, \quad \text{ so that } \delta(u_i \mid C_i) = - \lambda_i b/\|a_i\|_2 , \quad \text{for }\  i\in \Ecal,\\
	u_i & = \lambda_i a_i /\|a_i\|_2  \ \text{for some }  \lambda_i \in \mathbb{R}_+, \  \text{ so that } \delta(u_i \mid C_i) = - \lambda_i b/\|a_i\|_2 , \quad \text{for }\  i\in \Ical.\\
	\end{aligned}
	\end{equation}
	For simplicity, suppose $\Ecal=\{1,\ldots, \bar m\}$ and $\Ical=\{\bar m+1,\ldots, m\}$.  The primal problem thus can be written as  
	\[
	\inf_d\ \langle g, d\rangle +   \delta(d\mid X)+\sum_{i=1}^m \|a_i\|_2 \text{dist}(d\mid C_i).
	\]
	The primal problem thus can be rewritten as
	\[
	\inf_{d, \zbf}\ \phi(d, \zbf) := \langle g, d\rangle+ \delta(d\mid X)+
	\sum_{i=1}^m \|a_i\|_2 [\support{d-z_i}{\mathbb{B}_2}+  \delta(z_i\mid C_i)].
	\]
	with $\zbf = [z_1^T, \ldots, z_m^T]^T$
	by noticing that the optimal $z_i\in\mathbb{R}^n$ is always the projection of $d$ onto $C_i$.

	To derive the dual, notice that the primal objective is equivalent to 
	\[\phi(d, \zbf) =   \sup_{\ubf, \vbf} \Lcal((d, \zbf), (\ubf, \vbf))\]
	with $u_i\in\mathbb{R}^n, i=1,\ldots, m$, $v_i\in\mathbb{R}^n, i=1,\ldots, m, m+1$, $\ubf = [u_1^T, \ldots, u_m^T]^T$,  $\vbf = [v_1^T, \ldots, v_m^T, v_{m+1}^T]^T$ and the associated Lagrangian is given by
	\[
	\begin{aligned}
	\Lcal((d, \zbf), (\ubf, \vbf)) 
	= &\  \langle g, d\rangle +\langle v_{m+1}, d\rangle -\support{v_{m+1}}{X}\\
	& \ +\sum_{i=1}^m\|a_i\|_2[\langle u_i, d-z_i\rangle - \delta(u_i\mid \mathbb{B}_2)+\langle v_i, z_i\rangle-\support{v_i}{C_i}]\\
	= & \ \langle g + v_{m+1} + \sum_{i=1}^m \|a_i\|_2 u_i, d\rangle  -\support{v_{m+1}}{X} \\ 
	& \ +\sum_{i=1}^m\|a_i\|_2[ \langle - u_i + v_i, z_i\rangle -\delta(u_i\mid \mathbb{B}_2) - \support{v_i}{C_i}].
	\end{aligned}
	\]
	Hence, the dual objective is then
	\[
	\psi(\ubf, \vbf):=\inf_{d,\zbf}\Lcal((d,\zbf),(\ubf,\vbf)),
	\]
	The optimality conditions for the infimum in the definition of $\psi$ are
	\[
	\begin{aligned}
	\nabla_d \Lcal & =g+ v_{m+1}+\sum_{i=1}^m\|a_i\|_2 u_i= 0
	\\
	\nabla_{z_i}\Lcal &=- \|a_i\|_2  u_i+ \|a_i\|_2 v_i=0\ \qquad i=1,\dots m.
	\end{aligned}
	\]
	Eliminating the $d_i, z_i$ and $v_i$ variables in $\Lcal$ using the above conditions  yields the dual objective
	\[
	\psi(\ubf, \vbf)= -\support{-g-\sum_{i=1}^m \|a_i\|_2 u_i}{X}-
	\sum_{i=1}^m \|a_i\|_2 [\delta(u_i\mid \mathbb{B}_2)+\support{u_i}{C_i}],
	\]
	which yields  the associated  dual problem
	\[
	\begin{aligned}
	&\inf_{\ubf}\   \support{-g-\sum_{i=1}^m \|a_i\|_2 u_i}{X}+\sum_{i=1}^m \|a_i\|_2 \support{u_i}{C_i}
	\\
	&\mbox{s.t.}\ u_i\in \mathbb{B}_2\,\quad i=1,\dots,m.
	\end{aligned}
	\]

	We now use \eqref{comp dual} to further simplify the dual problem, so that the dual problem is given by 
	\[
	\begin{aligned}
	\inf_\lambda&\   \support{-g-\sum_{i=1}^m \lambda_i a_i}{X} - \sum_{i=1}^m b_i\lambda_i 
	\\
	\mbox{s.t.}&\ -1\le \lambda_i \le 1,\ i\in\Ecal,\\
	&\quad\  0 \le \lambda_i \le 1,\ i\in\Ical.
	\end{aligned}
	\]
	
	For the support function of norm ball $X = \{ x \mid  \|x \| \le \gamma\}$ with radius $\gamma > 0$, we have 
	$\delta^*(v \mid X) = \gamma \| v \|_*$. 
	Hence, we finally derive our dual problem 
	\[
	\begin{aligned}
	\inf_\lambda&\  \gamma \| g-\sum_{i=1}^m \lambda_i a_i \|_* - \sum_{i=1}^m b_i\lambda_i 
	\\
	\mbox{s.t.}&\ \lambda_i\in [-1,1],\ i\in\Ecal,\\
	&  \ \lambda_i\in [0,1],\quad i\in\Ical.
	\end{aligned}
	\]
	
	In addition, general convex analysis tells that 
	\begin{itemize}
		\item The dual of $\ell_2$ norm is the $\ell_2$ norm.
		\item The dual of $\ell_1$ norm is the $\ell_\infty$ norm.
		\item The dual of $\ell_\infty$ norm is the $\ell_1$ norm.
		\item More generally, the dual of the $\ell_p$ norm is the $\ell_q$ norm with $1/p + 1/q  = 1$. 
		\item For matrix norms, the dual of $\ell_2$ (or, the spectral) norm is the nuclear norm. 
		\item The dual of the Frobenius norm  is the Frobenius norm.
	\end{itemize}
	Hence, if we use an $\ell_2$ norm ball for the trust region, then the dual problem is quadratic; if we use an $\ell_\infty$ norm ball for the trust 
	region, then the dual problem has an $\ell_\infty$ norm in the objective.

	\bibliographystyle{tfs}
	\bibliography{reference}

\end{document}